\documentclass[12pt,reqno]{amsart}

\usepackage[margin=2cm]{geometry}

\usepackage{fourier}

\usepackage[colorlinks=true, pdfstartview=FitV, linkcolor=blue,citecolor=red, urlcolor=blue]{hyperref}

\usepackage{hyperref}

\usepackage{amsmath,amsthm,amssymb}
\usepackage{color}

\usepackage{url}
\usepackage{breakurl}
\newcommand{\bburl}[1]{\textcolor{blue}{\url{#1}}}



\theoremstyle{definition}
\newtheorem{thm}{Theorem}[section]
\newtheorem{reg}[thm]{Regularity}
\newtheorem{conv}[thm]{Convention}
\newtheorem{defi}[thm]{Definition}
\newtheorem{cor}[thm]{Corollary}
\newtheorem{lem}[thm]{Lemma}
\newtheorem{prop}[thm]{Proposition}
\newtheorem{rem}[thm]{Remark}


\newcommand{\R}{\ensuremath{\mathbb{R}}}
\newcommand{\C}{\ensuremath{\mathbb{C}}}
\newcommand{\Z}{\ensuremath{\mathbb{Z}}}

\numberwithin{equation}{section}


\DeclareMathOperator{\re}{Re}
\DeclareMathOperator{\im}{Im}

\DeclareMathOperator{\Res}{Res}

\DeclareMathOperator{\balph}{\boldsymbol{\alpha}}

\newcommand{\mbu}{\mathbf{u}}
\newcommand{\mbn}{\mathbf{n}}
\newcommand{\mby}{\mathbf{y}}
\newcommand{\mrU}{\mathrm{U}}
\newcommand{\mrN}{\mathrm{N}}
\newcommand{\mrG}{\mathrm{G}}
\newcommand{\ovmrG}{\overline{\mathrm{G}}}
\newcommand{\mrK}{\mathrm{K}}
\newcommand{\mbF}{\mathbf{F}}

\usepackage{empheq}

\usepackage[
backend=biber,
style=numeric,
sorting=nyt,
maxnames=10,
giveninits=true
]{biblatex}
\addbibresource{ref.bib}


\usepackage{pst-node}
\usepackage{tikz-cd}
\usepackage{verbatim}
\usepackage{mathdots}
\usepackage{bm}


\usepackage{scalerel,stackengine}
\stackMath

\newcommand\reallywidehat[1]{%
	\savestack{\tmpbox}{\stretchto{%
			\scaleto{%
				\scalerel*[\widthof{\ensuremath{#1}}]{\kern.1pt\mathchar"0362\kern.1pt}%
				{\rule{0ex}{\textheight}}
			}{\textheight}%
		}{2.4ex}}%
	\stackon[-6.9pt]{#1}{\tmpbox}%
}


\setcounter{tocdepth}{1}
\title[Spectral Moment Formulae for $\hbox{GL}(3)\times \hbox{GL}(2)$ $L$-functions II]{Spectral Moment Formulae for $\hbox{GL}(3)\times \hbox{GL}(2)$ $\hbox{L}$-functions II: The Eisenstein Case}

\author[C.-H. Kwan]{Chung-Hang Kwan}
\address{Institute for Advanced Study\\ 1 Einstein Drive, Princeton, New Jersey 08540,  USA}
\email{\textcolor{blue}{\href{mailto: chkwan@ias.edu}
{chkwan@ias.edu}}}

\subjclass[2020]{11F55 (Primary) 11F72 (Secondary)}

\keywords{Automorphic forms,  Automorphic $L$-functions, Rankin--Selberg convolution, Riemann zeta-function, Period integrals, Maass forms,  Eisenstein series, Poincar\'e series,  Moments of $L$-functions,  CFKRS conjectures, Spectral reciprocity, Motohashi's formula,  Whittaker functions, Hypergeometric functions, Barnes integrals}

\setcounter{tocdepth}{1}
\setcounter{secnumdepth}{4}

\date{\today}

\begin{document}

\renewcommand{\thefootnote}{\fnsymbol{footnote}}
\footnotetext[1]{This research is supported by the EPSRC Grant EP/W009838/1 during a postdoctoral position at University College London, and the Harish-Chandra Fund from the Institute for Advanced Study.}
\renewcommand{\thefootnote}{\arabic{footnote}}

\maketitle

\begin{abstract}
This work is the second in a series, following Part I \cite{Kw23} (\href{https://msp.org/ant/2024/18-10/p02.xhtml}{\emph{Algebra Number Theory} \textbf{18}.10 (2024)}) and preceding Part III \cite{Kw25} (\href{https://link.springer.com/article/10.1007/s00208-024-02914-9}{\emph{Math. Ann.} \textbf{391}.1 (2025)}). We continue our investigation of spectral moments of $\hbox{GL}(3)\times \hbox{GL}(2)$ $L$-functions from the perspective of period integrals. Using an identity between two distinct periods for the $\hbox{GL}(3)$ Eisenstein series,  we establish an exact Motohashi-type identity linking the shifted cubic moment of $\hbox{GL}(2)$ $L$-functions to the shifted fourth moment of $\hbox{GL}(1)$ $L$-functions. In addition, we offer a novel, intrinsic and automorphic account for the sources and symmetries of the full set of main terms for both moments, in agreement with the CFKRS Moment Conjectures (\href{https://londmathsoc.onlinelibrary.wiley.com/doi/abs/10.1112/S0024611504015175}{\emph{Proc. Lond. Math. Soc.} (3) \textbf{91} (2005)}). 
\end{abstract}


\section{Introduction}

\subsection{Background}

Over the past two decades,  the cubic moment of $\hbox{GL}(2)$ automorphic $L$-functions (or simply the \emph{cubic moment}) has driven a wave of remarkable applications in number theory. Notable among these is the landmark result of Conrey--Iwaniec \cite{CI00}, along with the vast recent developments of Petrow--Young \cite{PY19, PY20, PY23}, culminating in the Weyl-strength subconvexity bounds for Dirichlet $L$-functions of any primitive character $\chi\, (\bmod\ q)$: 
\begin{align}
	L(1/2, \, \chi)  \ \ll_{\epsilon} \  q^{1/6+\epsilon},
\end{align}
and analogous bounds for $\mathrm{GL}(2)$ $L$-functions in various aspects. These advances have far-reaching arithmetic consequences; see, for instance, \cite{Y17, DIT16, FM10, LMY13}. 

Structurally, it is striking that the cubic moment of $\hbox{GL}(2)$ $L$-functions is \emph{dual} to the fourth moment of the Riemann zeta function (or the \emph{fourth moment})---a fundamentally different moment that occupies a central place in classical analytic number theory, tracing back to the seminal works of Hardy--Littlewood \cite{HL16} and Ingham \cite{Ing27} in the 1920s. This duality was first discovered by Motohashi \cite{Mo93} in the 1990s, who established a deep spectral identity of the form
\begin{align}\label{basicmoto}
 \int_{-\infty}^{\infty}  \ |\zeta(1/2+it)|^4 w(t)   \ dt 
 \   = \  	\sum_{\substack{f: \text{ level $1$ } \\ \text{cusp forms}}} \  L( 1/2,\, f)^3 \widetilde{w}(t_{f})    \  \ + \  \  (***),
\end{align}
 where $w\mapsto \widetilde{w}$ denotes an intricate integral transform, and $(***)$ encodes a collection of non-trivial arithmetic contributions together with the continuous component. This identity is widely regarded as a ``crowning achievement'' in the theory of the Riemann zeta function (\cite{CK15a, CK15b}), and it represents the first and a particularly intriguing (and challenging) instance of \emph{Spectral Reciprocity}.

To study the cubic moment, one proceeds by  \textit{inverting} the transform $w\mapsto \widetilde{w}$ and the identity (\ref{basicmoto}). This was achieved to some extent (i.e., without revealing the underlying fourth moment) by Ivi\'{c} \cite{Iv01}, and Motohashi sketched a formal derivation in his note  \cite{Mot99}. Proving a spectrally inverted Motohashi-type formula, even in approximate form, requires substantial refinements of the techniques of \cite{CI00} and \cite{Iv01}. Such refinements were obtained only relatively recently by Petrow \cite{Pe15} and Frolenkov \cite{Fr20}, both via relative trace formulae. The relevant analytic machinery has been developed in various ways, e.g.,  \cite{Bl12b, Li11, Y17, JM05, Qi24, KPY19}.


\subsection{Moment conjectures}\label{MoCj}

A central theme in moments of $L$-functions is to understand the \textit{sources} and \textit{structures} underlying the full set of main terms. Over the years, this pursuit has drawn sustained attention and yielded fruitful outcomes. Most notably, two sets of heuristics have led to remarkably precise conjectures for the asymptotics of moments. Conrey--Farmer--Keating--Rubinstein--Snaith  \cite{CFK+05} (hereafter referred to as CFKRS) developed their \textit{``recipes''} based on shifted moments and approximate functional equations, whose use dates back to  \cite{Ing27} and \cite{HL16}.  Their approach reveals surprising connections to \textit{Random Matrix Theory} and brings to light many unexpected intrinsic symmetries and combinatorial structures.  On the other hand, Diaconu--Goldfeld--Hoffstein  \cite{DGH03} introduced the \textit{Multiple Dirichlet Series} for moments of the Riemann zeta function and quadratic Dirichlet  $L$-functions. Their predictions arise from the (conjectural) analytic continuation, polar divisors, and the group of functional equations of these series. 

For the two moments considered in this article, namely those in (\ref{basicmoto}),  extracting the full structure of the main terms, suppressed as part of  $(***)$ in (\ref{basicmoto}), is a delicate task. For the fourth moment of the $\zeta$-function, Heath-Brown  \cite{HB79} and Motohashi \cite{Mo93} proved the asymptotic formula 
\begin{align}\label{HBresul}
	\int_{T}^{2T} \, \left|\zeta(1/2+it)\right|^4 \ dt \ \ = \ \  T\cdot P_{2}(\log T) \, + \,   O(T^{1-\delta}) \hspace{15pt} (T\, \to\, \infty)
\end{align}
with $\delta<1/8$ and  $\delta<1/3$ respectively, where $P_{2}$ is a polynomial of degree $4$. The resulting expressions of $P_{2}$ (see also Conrey \cite{C96}) were, unfortunately, quite complicated, making a complete determination of its coefficients daunting from their formulae.  

In \cite{Mo93}, Motohashi considers the following smoothed and shifted variant of the fourth moment:
\begin{align}\label{shmozet}
	\int_{\R} \  \eta(t) \prod_{i=1}^{2} \, \zeta(1/2 +it + \alpha_{i})   \zeta(1/2 -it+ \beta_{i}) \ dt \hspace{15pt} (\eta \, \in \, C_{c}^{\infty}[T,2T]).
\end{align} 
 The shifts are initially taken with $\re \alpha_{i}, \,\re \beta_{j} \gg 1$, and play a pivotal role in an intricate analytic continuation procedure. CFKRS later discovered that the many degenerate terms in Motohashi's formula for (\ref{shmozet}) can be combined, in rather subtle ways, to yield an elegant, intrinsic description of the main terms.  There are six main terms in the CFKRS formulation (see (\ref{expl4thMT})), from which the polynomial $P_{2}$ can be computed in full and with ease by letting the shifts $\alpha_{i}, \beta_{j} \to 0$. This is explained in \cite[Section 5.1]{CFK+05}. 

The recipe works equally well for the cubic moment of $\hbox{GL}(2)$ $L$-functions. Owing to differences in the families and their arithmetic, this case yields eight distinct main terms. In \cite{CI00}, the authors located only the diagonal term, though they noted the interest in identifying the others (\cite[p. 1177]{CI00}). We review these conjectures in Section \ref{CFKRSSOU},  and examine the difficulties that hindered the identification of these terms in earlier approaches in Section \ref{mainres}.


\subsection{Beyond the Petersson/Kuznetsov paradigm}\label{strucBFWNe}

Motohashi's original work \cite{Mo93} is technical, and the source of \textit{``reciprocity''} is far from apparent. A priori, there is no reason to expect the off-diagonal (or error) terms to be neatly expressible by moments of $L$-functions. Having a conceptual understanding of this phenomenon, without intermediate devices like Kloosterman sums, summation formulae, shifted convolutions, or the Petersson--Kuznetsov formulae, is therefore of significant interest.

This direction was perhaps first pursued by Bruggeman--Motohashi \cite{BM05}, whose approach to constructing the two distinct-looking moments in  (\ref{basicmoto}) involves a combination of spectral projections, Hecke operators, and a two-fold limiting process applied to an automorphic kernel. The choice of the kernel is by no means straightforward: it entails delicate smoothing and regularization, along with careful control of convergence, regularity, and interchange of limits. These ingredients, in turn, illuminate the integral transform through the theories of Whittaker--Kirillov models and Bessel functions for the group $\hbox{PGL}_{2}(\R)$ (see \cite{CPS90}).  

Michel--Venkatesh \cite[Section 4]{MV06}, \cite[Section 4.5]{MV10}, and Reznikov \cite[p. 453]{R08} took a further step towards an account more intrinsic than \cite{BM05}. They envisioned proving Motohashi's formula within the structural framework of the \textit{strong Gelfand formation} and via a \textit{``regularized''} geodesic period for the product of two $\hbox{PGL}(2)$ Eisenstein series, \emph{formally} given by
  \begin{align}\label{regEIspe}
  	\text{Reg}-\int_{\Z^{\times}\setminus\R^{\times}} \, E(iy; s_{1}) E(iy; s_{2}) \, d^{\times} y,
  \end{align}
  which \emph{morally} suggests a matching of the spectra $L^{2}([\mathrm{PGL_{2}}])$ and $L^{2}([\mathrm{GL_{1}}])$, resulting in Motohashi's formula. However, numerous technical obstacles must be overcome to implement their strategy. In particular, the sources of various main (or degenerate) terms are not evident, and many integrals suffer from serious divergence issues due to the non-compactness of domains and the non-square-integrability of Eisenstein series. More refined regularization techniques than those developed in \cite{MV10} are therefore necessary; see also \cite{BHKM20} for other intricacies of this approach. Only very recently did Nelson \cite{Ne20+} establish full rigour for the Michel--Venkatesh--Reznikov period integral strategy, through a range of highly impressive and sophisticated techniques.

Wu \cite{Wu22} gave another interpretation of Motohashi’s formula by generalizing \cite{BM05} with a pre-trace formula on the Schwartz space of $2$-by-$2$ matrices, which incorporates the Godement--Jacquet and Tate $L$-functions on the spectral and geometric sides of the formula. Further explications and analytic subtleties were addressed in \cite{BFW21+}, which draws extensively on deep results in the theory of special functions. The authors of \cite{BFW21+} discuss the obstacles to obtaining an exact spectral inversion of Motohashi's formula.


\subsection{Our main results}\label{mainres} 

In this article, we present a new proof of the \emph{spectral inversion} of Motohashi's identity (\ref{basicmoto}) via the $\hbox{GL}(3)$ period integral method of \cite{Kw23}. Our approach offers fresh perspectives on the reciprocity phenomena and surmounts technical obstacles in earlier treatments. Furthermore, we elucidate the links between periods and the Moment Conjectures of \cite{CFK+05}.

In our previous work \cite{Kw23} (resp. \cite{Kw25}), we examine the (twisted) spectral first moment of $\hbox{GL}(3)\times \hbox{GL}(2)$ Rankin--Selberg $L$-functions with the $\hbox{GL}(3)$ automorphic form being a fixed Hecke--Maass cusp form. We uncover a higher-rank Motohashi-type identity that underpins Li's celebrated convexity-breaking results \cite{Li11}.  

This line of works belong to the recent framework \emph{Period Reciprocity}, for which only a handful of other instances are known: \cite{MV10, Bl12a, Ne20+,  Nu20+, Za20+, Za21, JN21+, Mi21+}.  The tools involved lie largely outside the traditional realm of analytic number theory, and they streamline the intricate chains of analytic-arithmetic transformations prevalent in the literature. We refer the reader to \cite[Sections 3-4]{Kw23}, \cite[Section 1]{Kw25} and \cite{JN21+} for more in-depth discussions of the implementations and technical features of this framework.

To state our main results, let $\alpha\in \mathfrak{a}_{\C}:=  \{ (\alpha_{1},\alpha_{2}, \alpha_{3})\in \C^{3}: \alpha_{1}+\alpha_{2}+\alpha_{3}=0 \}$, and $\mathfrak{M}_{-\alpha}^{(3)}(s;  H)$ denote the following shifted cubic moment:\footnote{ The shifts are labelled with minus signs for automorphic reasons, as we will discuss in the main body of this article. }
\begin{align}\label{def: speccubicmom}
     \sum_{j=1}^{\infty} \,  H(\mu_{j}) \,  & \frac{\prod_{i=1}^{3} \, \Lambda(s- \alpha_{i}, \phi_{j})}{\langle \phi_{j}, \phi_{j}\rangle} 
	    \ + \  \frac{1}{4\pi i } \ \int_{(0)} \ H(\mu) \ \frac{\prod_{i=1}^{3} \prod_{\pm} \, \Lambda(s\pm \mu -\alpha_{i}) }{|\Lambda(1+2\mu)|^2} \ d\mu, 
\end{align}
where ${\textstyle(\phi_{j})_{j=1}^{\infty}}$ is an orthogonal basis of \emph{even} Hecke--Maass cusp forms of $\mathrm{SL}_{2}(\Z)$ with respect to the Petersson inner product $\langle\, \,\cdot\, , \,\cdot \,\rangle$ (see (\ref{def: petersson})), and satisfies ${\textstyle\Delta\phi_{j}= ( 1/4-\mu_{j}^2) \phi_{j}}$ with $\Delta:=-y^2 (\partial_{x}^2+\partial_{y}^2)$. Also, let $\Lambda(s)$ and $\Lambda(s,\, \phi_{j})$ be, respectively, the \emph{completed} $\zeta$-function and \emph{completed} $L$-function of  $\phi_{j}$.

 To simplify our residual calculations (as in \cite{CFK+05}), we introduce:

\begin{conv}\label{con: smallshift}
    Without otherwise specified, we fix $\epsilon_{0}:=1/1000$, and further assume that $\alpha_{i}$'s are distinct and satisfy $ |\alpha_{1}|,  |\alpha_{2}|  <  \epsilon_{0}/1000$.
\end{conv}

In practical applications of any spectral summation formula, it is essential to have an ample supply of admissible test functions. The following regularity assumption serves our purposes and is satisfied by those commonly used in the literature (cf. \cite[Remark 5.27]{Kw23}).

\begin{reg}\label{regassuCeta}
Fix $\eta>40$. Define $\mathcal{C}_{\eta}$ to be the class of holomorphic functions $H$ on the vertical strip  $|\re \mu|< 2\eta$ such that  $H(\mu)=H(-\mu)$ and  $H(\mu)  \ll  e^{-2\pi|\mu|} $ in the strip.
\end{reg}

In the following, let $\Gamma_{\R}(s):= \pi^{-s/2}\,\Gamma(s/2)$; $d^{*}\mby := (y_{0}y_{1}^2)^{-1}\, dy_{0}\, dy_{1}$;\, $e(x):=e^{2\pi i x}$;  $H^{\flat}:(0,\infty)\rightarrow \C$ denote the inverse Kontorovich--Lebedev transform of $H$ (Definition \ref{defwhittrans});  $d^{W}\mu:= |\Gamma_{\R}(\mu)|^{-2}\, \frac{d\mu}{4\pi i}$ be the spherical Whittaker--Plancherel measure for $\mathrm{PGL}_{2}(\R)$;  and $W_{-\alpha}(x,   y_{1})$ be the Jacquet--Whittaker function for $\mathrm{PGL}_{3}(\R)$ (see (\ref{jac})). 

\subsubsection{The reciprocity formula}

\begin{thm}\label{maingl3gl2Eiscase}
  Let  $H\in \mathcal{C}_{\eta}$. On the vertical strip $1/4 + \epsilon_{0}  <  \re s  < 3/4$, the following formula holds: 
	\begin{align}
	  & \mathfrak{M}_{-\alpha}^{(3)}(s;  H) \ + \  \mathcal{R}_{-\alpha}(s; H) \ + \  \mathcal{R}_{\alpha}(1-s; H) \nonumber\\
&\hspace{50pt} 	\ = \  \sum_{i=1}^{3} \, \mathcal{M}_{-\alpha}^{i}(s; H) \, + \ \frac{1}{2}  \  \int_{(1/2)} \zeta\left(2s-s_{0}\right)\  \prod_{i=1}^{3} \   \zeta(s_{0}+\alpha_{i})  \left(\mathcal{F}_{\alpha}H\right)\left(s_{0},  s\right) \ \frac{ds_{0}}{2\pi i},\label{dual4thzeta}
	\end{align}
where
\begin{align}\label{intrans: firstform}
(\mathcal{F}_{\alpha} H)(s_{0},  s )  \ := \ \sum_{\pm} \,  \int_{0}^{\infty}  \int_{0}^{\infty}   \, H^{\flat}\Big( \frac{y_{1}}{\sqrt{1+y_{0}^2}}\Big) \int_{0}^{\infty}  W_{-\alpha}(x,   y_{1})  e(\pm xy_{0})  x^{s_{0}-1}\,  d^{\times} x\, \frac{ y_{0}^{2s-s_{0}}y_{1}^{s-\frac{1}{2}} }{(1+y_{0}^{2})^{\frac{s}{2}+\frac{1}{4}-s_{0}}}  \, d^{*}\mby,
\end{align}
\begin{align}
  \mathcal{M}_{-\alpha}^{0}(s; H) \, &:= \,  \prod_{1\le i<j\le 3} \, \zeta(2s-\alpha_{i}-\alpha_{j})   \   \int_{(0)}  \, H(\mu) \, \prod_{i=1}^{3} \, \prod_{\pm} \, \Gamma_{\R}(s\pm \mu-\alpha_{i}) \ d^{W}\mu,\label{thm:0swapterm} \\
  \mathcal{M}_{-\alpha}^{1}(s; H) \, &:= \, \sum_{i=1}^{3}\,  \ \zeta(2s-\sum_{\substack{1\le j\le 3\\ j\neq i}} \, \alpha_{j})\prod_{\substack{ 1\le j\le 3\\ j\neq i}}  \, \zeta(1+\alpha_{i}-\alpha_{j}) \nonumber\\
  &\hspace{80pt}\times \   \int_{(0)} \, H(\mu)\prod_{\pm} \, \Gamma_{\R}(1-s\pm \mu+\alpha_{i}) \prod_{\substack{ j\neq i}}\, \Gamma_{\R}(s\pm \mu-\alpha_{j}) \, d^{W}\mu, \label{1swapterm} \\
  \mathcal{M}_{-\alpha}^{2}(s; H) \, &:= \,   \ \sum_{i=1}^{3} \ \zeta\big(2-2s+ \sum_{\substack{1\le j\le 3\\ j\neq i}} \alpha_{j}\big) \prod_{\substack{ 1\le j\le 3\\ j\neq i}}\zeta(1-\alpha_{i}+\alpha_{j})\nonumber\\
  &\hspace{80pt} \, \times \, \int_{(0)} \, H(\mu)  \, \prod_{\pm} \, \Gamma_{\R}( s\pm \mu-\alpha_{i}) \prod_{\substack{1\le j\le 3\\ j\neq i}}\Gamma_{\R}(1-s\pm \mu+\alpha_{j})   \ d^{W}\mu,  \label{2swapterm}\\
   \mathcal{M}_{-\alpha}^{3}(s; H) \, &:= \, \  \prod_{1\le i<j\le 3} \, \zeta(2-2s+\alpha_{i}+\alpha_{j}) \int_{(0)} \,  H(\mu)\,    \prod_{i=1}^{3}  \ \prod_{\pm} \,  \Gamma_{\R}( 1-s\pm \mu + \alpha_{i}) \, d^{W}\mu,  \label{3swapterm}
\end{align}
\begin{align}
  \mathcal{R}_{\alpha}(s; H) \ &:= \  2 \ \sum_{i=1}^{3} \ H(s+\alpha_{i}) \ \frac{\prod_{\substack{1\le j\le 3\\ j\neq i}} \ \Lambda(1+\alpha_{i}-\alpha_{j}) \Lambda(2s+\alpha_{i}+\alpha_{j})}{\Lambda\left(1+2s+2\alpha_{i}\right)}.  \label{4thmoMT2}
\end{align}
\end{thm}

\subsubsection{Main terms}
Recent works on Moment Conjectures (e.g., \cite{Pe15} and \cite{CIS12}) have shown that it is both technically and aesthetically preferable to state and prove results in terms of \textit{complete} (rather than \textit{incomplete}) $L$-functions. Indeed, the complete set of the main terms of a moment of $L$-functions is expected to reflect the symmetries originating from successive applications of the functional equation for the family, as we will explain in Section \ref{CFKRSSOU}. (Incidentally,  the root number of the family is conveniently fixed in our theorem.) Nevertheless, the admissible class of test functions $\mathcal{C}_{\eta}$  is sufficiently large to deduce a version of Theorem  \ref{maingl3gl2Eiscase} for incomplete $L$-functions.

Here is an extra benefit.  The symmetries of all main terms for the cubic moment can now be addressed uniformly via archimedean Rankin--Selberg-type calculations in the style of Bump \cite{Bump88} and Stade \cite{St01}.  The main result of \cite{Bump88} only accounts for the diagonal term.  To capture the symmetries of the remaining seven off-diagonal terms, we prove three new \emph{Mellin--Barnes identities}.

	\begin{thm}[Propositions \ref{1swaprop}, \ref{Eissimpl},  \& \ref{secMTcom}]\label{CFKGamma}
    Let $J_{w, \alpha}(s; h)$ and $(\mathcal{F}_{\alpha}H)(s_{0},  s ) $ be the integral transforms defined in (\ref{1swpint}) and (\ref{intrans: firstform}) respectively. On the vertical strip $\epsilon_{0}  <  \sigma  < 1-\epsilon_{0}$, we have
			\begin{align}
					J_{w, \, -\alpha}(s; h)
				\ &= \   \frac{1}{4} \ \prod_{i=1}^{2} \, \Gamma_{\R}(1-\alpha_{i}^{w}+\alpha_{3}^{w})^{-1} \int_{(0)} \, H(\mu)\prod_{\pm} \, \Gamma_{\R}(1-s\pm \mu+\alpha_{3}^{w}) \prod_{i=1}^{2}\, \Gamma_{\R}(s\pm \mu-\alpha_{i}^{w}) \, d^{W}\mu; \label{thm1swap}\\ 
	\left(\mathcal{F}_{\alpha}H\right)\left(1-\alpha_{i},s\right) \, &= \,  	2 \  \frac{\Gamma_{\R}( 2s-1+\alpha_{i})}{\Gamma_{\R}( 2-2s-\alpha_{i})}  \int_{(0)} \, H(\mu)   \prod_{\pm} \, \Gamma_{\R}( s\pm \mu-\alpha_{i}) \prod_{\substack{1\le j\le 3\\ j\neq i}}\Gamma_{\R}(1-s\pm \mu+\alpha_{j})   \ d^{W}\mu; \label{thm2swap}\\
\left(\mathcal{F}_{\alpha}H\right)\left(2s-1,  s \right) 
		\ &= \ 2 \, \prod_{j=1}^{3} \, \frac{\Gamma_{\R}(2s-1+ \alpha_{j})}{\Gamma_{\R}(2-2s- \alpha_{j})} \, \int_{(0)} \,  H(\mu)   \prod_{j=1}^{3} \prod_{\pm} \,  \Gamma_{\R}( 1-s\pm \mu + \alpha_{j}) \, d^{W}\mu,  \label{thm3swap}
			\end{align}
            for $i=1,2,3$, $ w\in \left\{  w_{2}, w_{4}, w_{\ell} \right\}$ (see (\ref{Weyletl})), and $\alpha_{i}^{w}$'s defined in (\ref{weylactLan}). 
		\end{thm}

 In our approach, the sources of the main terms are intrinsic and transparent. In essence, it suffices to know the $\mathrm{GL}(3)$ \emph{Fourier--Whittaker period and expansion} (Lemma \ref{genFourlem} and Proposition \ref{incomf}):
\begin{itemize}
	\item \textbf{Cubic moment.} See Proposition  \ref{MTclass}.  The identities (\ref{thm1swap}), (\ref{thm2swap}) and (\ref{thm3swap}) correspond to the ``$1$-swap''  terms (\ref{1swapterm}), ``$2$-swap''  terms (\ref{2swapterm}), and ``$3$-swap''  term (\ref{3swapterm}),  respectively. The diagonal term (\ref{thm:0swapterm}) is also known as the ``$0$-swap'' term.
	
	\item \textbf{Fourth moment.} Its full set of main terms arises from 
	\begin{itemize}
		\item   The continuous spectrum: resulting in two ``$1$-swap'' terms and one ``$2$-swap'' term that can be collected into $\mathcal{R}_{\alpha}(1-s; H) $ in (\ref{dual4thzeta}); see Lemma \ref{contrcon}; 
		
		\item The non-constant degenerate part of the Fourier expansion:  resulting in two  ``$1$-swap'' terms and one ``$0$-swap'' term that can be collected into $\mathcal{R}_{-\alpha}(s; H) $  in (\ref{dual4thzeta}); see  Proposition \ref{degprop}. 
	\end{itemize}
\end{itemize}
The notion of ``swap'' was introduced in \cite{CK15a, CK15b} as a refinement of the CFKRS conjectures; see Sections \ref{CFKOver}-- \ref{CFKUsym} for detailed discussions. As already hinted in (\ref{thm1swap}),  the symmetries of the Weyl group of $\mathrm{GL}(3)$ are essential in understanding the structures of the main terms.

Previously, Hughes and Young \cite{HY10, Y11} investigated the main terms for the fourth moment of the $\zeta$-function
by a method different from Motohashi’s. They employed the approximate functional equation (for the product of four $\zeta$'s) in conjunction with the $\delta$-symbol expansion (e.g., \cite{DFI94}) applied to the shifted divisor sums, and careful smoothing and truncations. The upshot of their analysis is a set of exotic cancellations and combinations of many complicated-looking terms that, strikingly, reproduces the CFKRS predictions up to acceptable error. Petrow \cite[Sections 2-3]{Pe15}  studied the full set of main terms for the cubic moment via an argument of comparable difficulty. 

Our approach to extracting the full set of main terms differs from the existing ones. We are able to treat the main terms for  \textit{both} the cubic and fourth moments in a unified manner, without ad hoc cancellations or delicate combinations of terms. In Sections \ref{CFKRS} and \ref{CFKRSU}, we verify the agreement with the CFKRS conjectures by a short argument, using only Theorem \ref{CFKGamma}, the relation $\alpha_{1}+\alpha_{2}+\alpha_{3}=0$, and the functional equation for the $\zeta$-function. Our perspective may shed light on recent developments in the mean value theorems for long Dirichlet polynomials \cite{CK19, HN22, BTB22+, CF22+},  which aim to identify the arithmetic sources of main terms in various families of moments of $L$-functions \textit{without} the approximate functional equations.  

For more abstract treatments of the main terms via regularized or degenerate functionals in automorphic representations; see \cite[Section 11]{Ne20+} and  \cite[Section 1.5]{Wu22}.

\subsubsection{Integral transform}
In this work, we carry out explicit calculations using Mellin--Barnes integrals, thereby deducing a range of results directly from first principles (Section \ref{idsect: Barnes}). Using essentially the same ideas as in the proof of Theorem \ref{CFKGamma}, we express the integral transform $\mathcal{F}_{\alpha}H$ in a symmetric form via the Gauss hypergeometric function $\mbF(\cdots)$ defined in (\ref{2F1define}).

\begin{thm}[Corollary \ref{cor: hypergtrans}]\label{thm: newGaussintrans}
Suppose $s=1/2$ and $\re s_{0}=1/2$. Then
\begin{align}
	(\mathcal{F}_{\alpha} H)(s_{0}, 1/2) 
		\ = \    &4 \, \Gamma_{\R}(1-s_{0})\prod_{i=1}^{3}\, \Gamma_{\R}(s_{0}+\alpha_{i})  \int_{0}^{\infty} \,  \mbF\left(\begin{matrix}
   \frac{1-s_{0}}{2},\; \frac{1-s_{0}-\alpha_{1}}{2}\\
    \frac{1}{2}
  \end{matrix}\Bigm| -x^2\right)\mbF\left(\begin{matrix}
\frac{s_{0}+\alpha_{2}}{2},\; \frac{s_{0}+\alpha_{3}}{2}\\
    \frac{1}{2}
  \end{matrix}\Bigm| -x^2\right)\nonumber\\
        & \hspace{30pt}\cdot \, \int_{(0)} \ H(\mu) \prod_{\pm}\, \prod_{\pm} \,  \Gamma_{\R}(1/2\pm\mu\pm\alpha_{1}) \mbF\left(\begin{matrix}
   \frac{1}{4} +  \frac{\alpha_{1}+\mu}{2},\; \frac{1}{4} +  \frac{\alpha_{1}-\mu}{2}\\
    \frac{1}{2}
  \end{matrix}\Bigm| -x^2\right)\,  \,	d^{W}\mu \ dx.  
		\end{align}    
\end{thm}


	\subsection{Sketch, road map and discussion of our argument}\label{feaDIS}
 Let $\mrG_{n}:= \mathrm{GL}_{n}(\R)$, $\Gamma_{n}:=\mathrm{SL}_{n}(\Z)$, $\mrU_{n}$ be the standard maximal unipotent subgroup of $\mrG_{n}$, $[\mathbb{G}_{m}]:= \Z^{\times}\setminus \R^{\times}$,  $[\mrU_{n}]:= (\mrU_{n}\cap \Gamma_{n})\setminus \mrU_{n}$, and $[\ovmrG_{n}]:= \Gamma_{n}\setminus \mrG_{n}/\R_{>0}^{\times}$.  In \cite{Kw23}, we study the analogue of Theorem \ref{maingl3gl2Eiscase} for a Hecke--Maass cusp form $\Phi: [\ovmrG_{3}]\rightarrow \C$, and show that the corresponding Motohashi-type formula follows from a seemingly trivial equality for a  ``Fourier--Hecke period'':
	\begin{align}\label{trivstart}
		\int_{[\mrU_{2}]}  \, \Big( \int_{[\mathbb{G}_{m}]} \,  \Phi(zn)  \ d^{\times}z\Big) \, \overline{\psi(n)}\, dn  \ = \ \int_{[\mathbb{G}_{m}]}  \ \Big( 	\int_{[\mrU_{2}]} \,  \Phi(nz) \overline{\psi(n)} \  dn\Big) \, d^{\times}z,
	\end{align}
   where  $\Phi(g):= \Phi\begin{psmallmatrix}
        g & \\
          & 1
    \end{psmallmatrix}$ \footnote{ Some authors use the notation  $(\mathbb{P}_{2}^{3}\Phi)(g)$ to indicate the projection (or restriction) of a function on $\mrG_{3}$ to $\mrG_{2}$. }, and  $\psi \in \mathrm{Hom}([\mrU_{2}], \, \C^{\times})$ is fixed and non-trivial. In \cite[Chapter 1.3.1]{Kwa-thesis}, the overarching structure for implementing (\ref{trivstart}) is illustrated by a Reznikov diagram (\cite{R08}):
\begin{center}
    \begin{tikzcd}[sep=small]
              & \mrG_{3}  &          \\
   \mrG_{2}  \arrow[hookrightarrow]{ur}     &        & \mrU_{3} \arrow[hookrightarrow]{ul}  \\
              & \mathrm{U}_{2} \arrow[hookrightarrow]{ul}
       \arrow[hookrightarrow]{ur}  &
    \end{tikzcd}
     \end{center}
Each arrow of the form $H\hookrightarrow G$ represents a pairing (or period) over $[H]$, obtained by suitably restricting functions on $[G]$ to $[H]$. In our case, the pairings behind the arrows refer to:
\begin{itemize}
    \item \textbf{(Upper-left/ $\mathrm{\mathbf{GL}}_{3}\times \mathrm{\mathbf{GL}}_{2}$ Rankin--Selberg)} $\langle \, \Phi,  \,  \sigma \, \rangle_{[\ovmrG_{2}]}:= \int_{[\ovmrG_{2}]} \, \varphi(g) \overline{\sigma(g)} \, dg$,  where  $\sigma  \in L^{2}([\ovmrG_{2}])$, and  $\varphi(g):= \int_{[\mathbb{G}_{m}]} \, \Phi(zg) \, d^{\times} z$. 

    \item  \textbf{(Lower-left/ $\mathrm{\mathbf{GL}}_{2}$ Fourier--Whittaker)} $ \langle \, \sigma, \, \psi \, \rangle_{[\mathrm{U}_{2}]}:= \int_{[\mathrm{U}_{2}]} \, \sigma(n) \overline{\psi}(n) \, dn$.

    \item \textbf{(Upper-right/ $\mathrm{\mathbf{GL}}_{3}$ Fourier--Whittaker)} $\langle \, \Phi, \, \Psi \, \rangle_{[\mathrm{U}_{3}]}:= \int_{[\mathrm{U}_{3}]} \, \Phi(u) \overline{\Psi}(u) \, du$, for $\Psi\in \mathrm{Hom}([\mrU_{3}], \,\C^{\times})$.
    
    \item \textbf{(Lower-right/ $\mathrm{\mathbf{GL}_{3}}$ partial unipotent)} $\langle\, \Phi, \ \psi \, \rangle_{[\mathrm{U}_{2}]}':= \int_{[\mathrm{U}_{2}]}  \, \Phi(n)\overline{\psi}(n) \, dn$.
\end{itemize}
The diagram also indicates the use of the \emph{spectral expansion} of $\varphi$ in $L^{2}([\ovmrG_{2}])$, together with the \emph{Fourier--Whittaker expansion} of $\Phi$ (over $[\mrU_{3}]$ and with respect to $g\mapsto \begin{psmallmatrix}
    1 & \\
      & g
\end{psmallmatrix}$) when evaluating $\langle\, \Phi, \ \psi \, \rangle_{[\mathrm{U}_{2}]}'$. As shown in \cite[Section 6A]{Kw23}, a partial abelian Fourier expansion already suffices for the latter step and, in fact, captures the essence of our method more neatly. Then, the \emph{multiplicity-one principle} and \emph{Mellin inversion formula} render the emergence of the dual moment and the structure of the integral transform transparent; see Remark \ref{rem: project}, \cite[Section 5.3]{Kw25} and \cite[Section 4]{Kw23}.  Theorem \ref{CFKGamma}, and consequently the agreement with the CKFRS Conjectures, follows naturally from this form of the integral transform, as we explain in Section \ref{sect: 23swap}.

In this work, we investigate (\ref{trivstart}) in the case of the $\mathrm{SL}_{3}(\Z)$  Eisenstein series associated with the isobaric sum $ |\cdot|^{\alpha_{1}} \, \boxplus |\cdot|^{\alpha_{2}} \, \boxplus \, |\cdot|^{\alpha_{3}} $. Although periods of Eisenstein series play a fundamental role in many notable instances of spectral reciprocity, handling them rigorously remains highly nontrivial, and no general method is currently available. In addition to \cite{Ne20+}, we also mention \cite{JN21+, Mi21+}, which employ the regularization method of Ichino--Yamana \cite{IY15}, and \cite{Za20+}, which develops Michel--Venkatesh’s deformation and regularization techniques.

In (\ref{trivstart}), the most immediate obstruction comes from the divergence of the integrals over $[\mathbb{G}_{m}]$ when $\Phi$ is Eisenstein. Resolving this highlights the advantage of interpreting the classical Motohashi phenomena of (\ref{basicmoto}) and Theorem \ref{maingl3gl2Eiscase}, which are intrinsically dualities between $\mathrm{GL}(1)$ and $\mathrm{GL}(2)$ $L$-functions, through the periods and automorphic forms for the ``larger'' group $\mathrm{GL}(3)$. Our strategy differs from Nelson's \cite{Ne20+}.  We crucially exploit the structure of the $\mathrm{GL}(3)\times \mathrm{GL}(2)$ Rankin--Selberg period, in which the unfolding \emph{does not} rely on the full automorphy of the $\mathrm{GL}(3)$ form, at least within the region of absolute convergence (upon attaching the factor $|\det *|^{s-1/2}$ to (\ref{trivstart}) and taking $\re s\gg 1$). This feature allows the spectral calculations in the cuspidal and Eisenstein cases to proceed in complete parallel, and it is worth noting that only the \emph{non-degenerate} part of the Fourier--Whittaker expansion of $\Phi$ is used here.

The development of the dual side of (\ref{trivstart}) embodies most of the novelties of this work and reveals several aspects not present in our earlier papers \cite{Kw23, Kw25}. We begin by evaluating the unipotent period over $[\mrU_{2}]$, which, upon invoking a suitable automorphy of $\Phi$ (Proposition \ref{incomf}), decomposes into a sum of three components (Proposition \ref{prop: decompo}) via the Fourier--Whittaker expansion of $\Phi$ (Lemma \ref{genFourlem}). The unipotent integration eliminates the degenerate components of $\Phi$ responsible for divergence, by a two-line argument (Lemma \ref{lem: whywecanreg}). We are then in a position to evaluate the integral over $[\mathbb{G}_{m}]$ for $\re s \gg 1$ (again with the factor $|\det *|^{s-1/2}$). The remaining ``benign'' degenerate terms of $\Phi$ contribute to the main terms of our spectral reciprocity formula in non-trivial ways (Sections \ref{1-swap}--\ref{degt}). We exploit the equivariance of the Fourier--Whittaker periods at several points, which offers significant advantages over the complicated Hecke combinatorics that typically arise from the $\mathrm{GL}(3)$ Voronoi formula in Kuznetsov-based approaches. Finally, by examining the explicit forms of each component in the development of (\ref{trivstart}) in terms of $\zeta$- and $L$-functions, we find that the whole expression admits analytic continuation to the region for $s$ described in Theorem \ref{maingl3gl2Eiscase}. The rest of the main terms predicted by the CFKRS conjectures arise naturally as polar contributions in the continuation process. These are the subjects of Sections \ref{diagpreoff}, \ref{offdiagEis} and \ref{CFKRS}.

The unipotent integration also yields an ample supply of admissible test functions on the spectral side of our reciprocity formula, with their regularity and shapes tailored to standard ones in the literature of moments of $L$-functions. This is obtained through the Whittaker--Plancherel formula (Lemma \ref{inKLconv}) with a Poincar\'{e} series incorporated into the period construction (\ref{trivstart}) (Corollary \ref{prop: spectralcont}). The Poincar\'{e} series further enables us to specify the archimedean component within the spectrum of $L^{2}([\ovmrG_{2}])$, a feature particularly favourable for spectral applications, much as in the many uses of the Kuznetsov formula. The focus of this work is an \emph{exact}, \emph{spectral} version of Motohashi’s formula restricted to the spherical part of $L^{2}([\ovmrG_{2}])$, complementing the results of \cite{Mo93, BM05}; accordingly, we work with a spherical Poincar\'{e} series. If desired, one may similarly localize to another subspace of $L^{2}([\ovmrG_{2}])$ by prescribing a different minimal $K$-type.

Moreover, to present the period integral and automorphic machinery to a broader analytic number theory audience, we illustrate our ideas with the full modular group and $\Phi$ spherical, which allows for explicit formulae (e.g., Lemma \ref{vtmellin}) and more transparent computations. These choices serve purely expository purposes and are by no means essential to our method.
  
One should be able to recover the results of \cite{Mo93, BM05} with our method.  In that setting,  our analysis should be simpler---we do not require the Atkinson dissection  (or any geometric dissection), and their four-variable continuation argument can be replaced by a single-variable one, as  the ``shifts'' play no essential role in  our analytic continuation argument (they are used only to simplify residual calculations).  We leave these considerations for future work.


\subsection{Additional Remarks}
It is clear that the techniques of this paper carry over to the case when $\Phi$ is a  $\mathrm{SL}_{3}(\Z)$ Eisenstein series associated with $\phi \, \boxplus \, 1$, where $\phi$ is a Hecke--Maass cusp form of $\mathrm{SL}_{2}(\Z)$.  Theorem  \ref{maingl3gl2Eiscase} now takes the shape
\begin{align}
	\hspace{10pt} 	\sum_{j=1}^{\infty} \,  H(\mu_{j}) \ & \frac{\Lambda( 1/2, \phi \otimes \phi_{j}) \Lambda(1/2, \phi_{j}) }{\langle \phi_{j}, \phi_{j}\rangle}  \nonumber\\
	&	\  \ =  \  \ \frac{1}{4\pi} \ \int_{\R} \ L(1/2+it, \phi)  \left| \zeta\left(1/2+it\right)\right|^2  (\mathcal{F}_{\alpha(\phi)} H)\left(1/2+it,  1/2 \right) \ dt \ \ + \ \  (***). 
\end{align}
This type of \textit{mixed moments} is also of interest to the literature due to the applications to simultaneous non-vanishing;  for example, \cite{BBFR20, Li09, Nu20+, X11}.

We have not yet fully exploited the strength of our method in this work, particularly in its non-archimedean aspects. For example, in \cite{Kw25}, we establish a $\hbox{GL}(3)$ Motohashi-type formula that dualizes $\hbox{GL}(2)$ twists of Hecke eigenvalues into $\hbox{GL}(1)$ twists by Dirichlet characters. In the isobaric case, this yields a short proof of the key spectral identity of  \cite{Y11} and  \cite{BHKM20} for the fourth moment of Dirichlet $L$-functions.

 \subsection{Outline}\label{prelim}
  Section \ref{notconv} introduces the notations and conventions used throughout this article. Section \ref{prel} reviews the essential definitions and preliminary results. Section \ref{Stirl} focuses on the integral transform of our spectral reciprocity formula and contains the proof of Theorem \ref{thm: newGaussintrans}. For readers less familiar with the CFKRS conjectures, an overview is included in Section \ref{CFKRSSOU}. The proof of Theorem \ref{maingl3gl2Eiscase} occupies the whole Section \ref{proofEismaTHm}, with a road map provided in Section \ref{feaDIS}. The proof of Theorem \ref{CFKGamma} can be found in Sections  \ref{1-swap} and \ref{sect: 23swap}. In Sections \ref{CFKRS} and \ref{CFKRSU}, we explain the connections between Theorems \ref{maingl3gl2Eiscase}--\ref{CFKGamma} and the CFKRS conjectures. We end the article with two observations (Section \ref{sect: comment}).


  \section{Notations and conventions}\label{notconv}

  \subsection{The parameters (Convention \ref{con: smallshift})}
  We fix $\epsilon_{0}:=1/1000$, and assume that $\alpha\in \mathfrak{a}_{\C}:= \{ (\alpha_{1},\alpha_{2}, \alpha_{3})\in \C^{3}: \alpha_{1}+\alpha_{2}+\alpha_{3}=0 \}$, the components $\alpha_{i}$'s are distinct and satisfy $ |\alpha_{1}|,  |\alpha_{2}|  <  \epsilon_{0}/1000$. We let $\balph :=\max_{1\le i\le 3} \, |\re \, \alpha_{i}| \,(<\epsilon_{0}/500)$.

  \subsection{Test functions (Regularity \ref{regassuCeta})}
 We fix $\eta>40$. Our test function $H$ lies in the class $ \mathcal{C}_{\eta}$. The function $h=H^{\flat}$ is the Kontorovich--Lebedev inversion of $H$ as defined in (\ref{invers}).

\subsection{Analysis}
   We use the same symbol to denote a function and its analytic continuation. We denote the real part of $s$ by $\sigma$. We write ``$f(y) \ll_{A} g(y)$'' (or ``$g(y)\gg_{A} f(y)$'') if there exists a  constant $C>0$, depending on $A$, such that $|f(y)|\le  C|g(y)|$ for sufficiently large $y$.
   
   The Euler $\Gamma$-function is defined by 
   \begin{align}
       \Gamma(s) \, := \, \int_{0}^{\infty} e^{-x} x^{s} \, d^{\times} x
   \end{align}
   on $\re s >0$. It admits a meromorphic continuation to $\C$. Let $\Gamma_{\R}(s):= \pi^{-s/2}\Gamma(s/2)$ for $s\in \C$. We use both $\Gamma(s)$ and $\Gamma_{\R}(s)$ as the former is common in our references, but the latter simplifies our formulae on several occasions. 

The contours of the Barnes integrals 
\begin{align}
    \int_{-i\infty}^{i\infty} \, \cdots \,  \int_{-i\infty}^{i\infty} \ (\cdots) \    \frac{ds_{1}}{2\pi i} \, \cdots \,  \frac{ds_{k}}{2\pi i}
\end{align}
are chosen so that they pass to the right of all poles of the gamma functions of the form $\Gamma(s_{i}+ a)$, and to the left of all poles of those of the form $\Gamma(a -s_{i})$.

Let $\Lambda(s) := \Gamma_{\R}(s)\zeta(s)$ denote the completed Riemann zeta function. It admits a holomorphic continuation to $\C$ except for simple poles at $s=0,1$, and satisfies the functional equation 
\begin{align}\label{riemannFE}
    \Lambda(s) \ = \  \Lambda(1-s)
\end{align}
for any $s\in \C$. 

The $K$-Bessel function is defined by
\begin{align}\label{def: Kbessfunc}
    K_{\mu}(y) \, := \, \frac{1}{2}\int_{0}^{\infty} \exp\Big(-\frac{y}{2}(t+t^{-1})\Big) t^{-\mu} \, d^{\times} t
\end{align}
for any $y>0$ and $\mu\in \C$.

The Gauss $_{2}F_{1}$ hypergeometric function is defined by
	\begin{align}\label{2F1define}
  \  \mbF\left(\begin{matrix}
    a,\; b \\
    c
  \end{matrix}
  \Bigm|z
\right)  \ := \  \frac{\Gamma(c)}{\Gamma(a) \Gamma(b) }\, \sum_{n=0}^{\infty} \ \frac{\Gamma(a+n) \Gamma(b+n) }{\Gamma(c+n) } \frac{z^{n}}{n!}. 
 	\end{align}
The series converges absolutely	on $|z|<1$ if $a,b \not \in \Z_{\le 0}$;  and  on $|z|=1$ if $\re (c-a-b)>0$. Furthermore, it has an analytic continuation (in $z$) to $\C - [1, \infty)$.

Let $h: (0,\infty)\rightarrow \C$ be a continuous function. Its Mellin transform and inversion formula are given, respectively,  by
\begin{align}\label{def: Mell}
    \widetilde{h}(s) \, := \, \int_{0}^{\infty} h(y)y^{s} \, d^{\times} y \hspace{15pt} \text{and} \hspace{15pt} h(y) \, = \, \int_{(\sigma)} \, \widetilde{h}(s)y^{-s} \, \frac{ds}{2\pi i},
\end{align}
provided that both integrals converge absolutely. 

 \subsection{Groups}\label{sect: groupconven}
   The Weyl group $W_{3}$ of $\mathrm{GL}_{3}$ is described as in (\ref{Weyletl}). For $n \in \{2,3\}$, let $\mrG_{n}:= \mathrm{GL}_{n}(\R)$, $\overline{\mrG}_{n}:= \mrG_{n}/\R_{>0}^{\times}$, $\Gamma_{n}:=\mathrm{SL}_{n}(\Z)$ the full modular group;  $\mathrm{K}_{n}:=\mathrm{O}_{n}$ the maximal compact subgroup of $\mrG_{n}$; $\mrU_{n}$ the subgroup of upper-triangular unipotent matrices in $\mrG_{n}$; $\mrN_{ij}$ the one-parameter unipotent subgroup attached to the $(i,j)$-th entry, where $i\neq j$. 
   
   The following quotients occur frequently in this work:  $[\mrU_{n}]:= (\mrU_{n}\cap\Gamma_{n})\setminus \mrU_{n}$, $[\mrN_{ij}]:= (\mrN_{ij}\cap \Gamma_{n})\setminus \mrN_{ij}$, $[\mrG_{n}]:= \Gamma_{n}\setminus \mrG_{n}$, $[\ovmrG_{n}]:= \Gamma_{n}\setminus \ovmrG_{n}$, $[\Gamma_{n}]:= (\mrU_{n}\cap \Gamma_{n})\setminus \Gamma_{n}$, and  $\mathfrak{h}^n :=\mrG_{n}/\mathrm{K}_{n}$.  Let $\mathrm{Y}^{+}$ be the group of matrices of the form $\mby:= \mathrm{diag}(y_{0}y_{1}, y_{0},1)$ with $y_{0}, y_{1}>0$. We have the following identifications: 
\begin{align*}
    \mathfrak{h}^2 \simeq \left\{ \begin{psmallmatrix}
 		1 & x\\
 		& 1
 	\end{psmallmatrix} \begin{psmallmatrix}
 		y & \\
 		& 1
 	\end{psmallmatrix} : x \in \R, \ y >  0  \right\} \hspace{20pt} \text{and} \hspace{20pt} \mathfrak{h}^3 \simeq \{\mbu \mby:\, \mbu\in \mrU_{3}, \, \mby \in \mathrm{Y}^{+}\}.
\end{align*}   
   
    We write  $\mbu:= (u_{ij})_{1\le i,j\le 3} \in \mrU_{3}$;  $\mbn, \, \mbn_{ij}(x) \in \mrN_{ij}$ with $x\in \R$ being the $(i,j)$-th entry of $\mbn_{ij}(x)$; \, $\mby(y_{0}, y_{1})= \mathrm{diag}(y_{0}y_{1}, y_{0},1)$ and $f(\mby)=f(y_{0}, y_{1})$ for $y_{0},\, y_{1}>0$; and for $\gamma\in \Gamma_{2}$ and $g\in \mrG_{3}$, we set
    \begin{align}\label{matrixshort}
        \gamma g \, := \, \mathrm{diag}(\gamma, 1)g.
    \end{align}

    \subsection{Additive characters}
     Let $e(x):=e^{2\pi i x}$ for $x\in \R$. We label the characters of $[\mrU_{3}]$ and $[\mrN_{ij}]$ by $\psi_{(m_{1},m_{2})}(\mbu):= e(m_{1} u_{23}+m_{2} u_{12})$  \ and $\psi_{m}(\mbn):= e(m n_{ij})$ respectively, where $m, m_{1}, m_{2}\in \Z$, $\mbu\in \mrU_{3}$, and   $\mbn \in \mrN_{ij}$. We also write $\psi_{\pm}= \psi_{(1, \, \pm 1)}$ and $\psi=\psi_{+}$.

  \subsection{Measures}
We use the shorthand $d^{*}\mby = (y_{0}y_{1}^2)^{-1}\,dy_{0} \,dy_{1}$.  The normalized spherical Whittaker--Plancherel measure for $\mrG_{2}$ is given by
  \begin{align}\label{def: LWmeas}
      d^{W}\mu \, := \,  |\Gamma_{\R}(\mu)|^{-2} \, \frac{d\mu}{4\pi i} \hspace{20pt} (\mu \, \in \, i\R).
  \end{align}
   The invariant measure on $\mrG_{2}$ can be described by the Iwasawa decomposition as follows:
  \begin{align*}
   dg \, = \,  \frac{ dx\, dy}{y^2} \, \frac{dz}{z} \, dk\hspace{15pt} \text{for} \hspace{15pt} g=\begin{psmallmatrix}
      1 & x\\
        & 1
  \end{psmallmatrix}\begin{psmallmatrix}
      y & \\
        & 1
  \end{psmallmatrix}\begin{psmallmatrix}
      z & \\
        & z
  \end{psmallmatrix} k,   
  \end{align*}
  where $x\in \R$, $y, z>0$, $k\in \mrK_{2}$, and $\mathrm{meas}(\mathrm{K}_{2})=1$. The Petersson inner product is defined by
\begin{align}\label{def: petersson}
	\left\langle \, \phi_{1},\,  \phi_{2} \, \right\rangle \ := \ \int_{[\ovmrG_{2}]} \ \phi_{1}(g) \,\overline{\phi_{2}(g)} \ dg 
\end{align}
for smooth functions $\phi_{1}, \phi_{2}$ on $[\ovmrG_{2}]$. If $\phi_{1}, \phi_{2}$ are spherical (i.e., $\mrK_{2}$-invariant), the domain of (\ref{def: petersson}) can of course be replaced by $\Gamma_{2}\setminus \mathfrak{h}^2$.

\subsection{Automorphic objects}
In this article, our Whittaker and automorphic functions are defined on  $\overline{\mrG}_{n}$ and are \emph{spherical} (or right $\mathrm{K}_{n}$-invariant). Equivalently, they are functions on the locally symmetric space $\mathfrak{h}^n$. For brevity, we may omit the term ``spherical'' in what follows.

 Our automorphic \emph{forms} are \emph{Hecke--Maass} forms for $\Gamma_{n}$, i.e., they are smooth functions of moderate growth on $\overline{\mrG}_{n}$ that are right $\mathrm{K}_{n}$-invariant, left $\Gamma_{n}$-invariant, and are joint eigenfunctions to the commutative ring of invariant differential operators on $\mathfrak{h}^n$ (denoted by $\mathcal{D}_{n}$) and the ring of Hecke operators. Their first Fourier coefficients are normalized to be $1$. They are either cuspidal or Eisenstein.

We use $E_{\min}^{(3)}$ to denote the minimal parabolic Eisenstein series for $\Gamma_{3}$, which we define in (\ref{minparaEis}). From Section \ref{Gl3Eiseri} onward, we use $\Phi$ exclusively for the \textit{completed} version of $E_{\min}^{(3)}$ as given in (\ref{compEis}). 

The Fourier--Whittaker coefficients and periods of an automorphic form $\phi$ of $\Gamma_{2}$ are denoted, respectively, by $\mathcal{B}_{\phi}(a)$ and $\mathcal{W}_{a}(g; \, \phi)$; those of an automorphic form $\Phi$ of $\Gamma_{3}$ are denoted by $\mathcal{B}_{\Phi}(m_{1}, m_{2})$ and $\mathcal{W}_{(m_{1},m_{2})}(g; \, \Phi)$. We write $W_{\mu}(g)$ and $W_{\alpha}(g)$ for the standard Whittaker functions for $\overline{\mrG}_{2}$ and $\overline{\mrG}_{3}$ respectively, and $W_{\alpha,\, w}^{(0,1)}(\mby)$ and $W_{\alpha, \, w}^{(1,0)}(\mby)$ for the degenerate Whittaker functions for $\overline{\mrG}_{3}$. This is no ambiguity as the group to which $g$ belongs is clear from the context. We use $\mu$ and $\alpha$ to denote the spectral parameters for, respectively, the automorphic forms of $\Gamma_{2}$ and $\Gamma_{3}$ (or Whittaker functions). In our case, $\mu$ is always purely imaginary because of (\ref{def: LWmeas}) and the known Selberg's conjecture for $\Gamma_{2}$ (\cite[Theorem 3.7.2]{Gold}). 

The following unipotent period for $\mrG_{3}$ plays an important role in our work:
\begin{align}
\mathcal{P}_{\psi}(g; \, F) \, := \, 	\int_{[\mrN_{12}]} F(
		\mbn g) \overline{\psi(\mbn)} \ d\mbn,   
\end{align}
where $F: \mrN_{12}(\Z)\setminus \mrG_{3}\rightarrow \C$ is a smooth function. It is customary to write $F(g):= F\begin{psmallmatrix}
        g & \\
          & 1
    \end{psmallmatrix}$ for $g\in \mrG_{2}$ and a function $F$ defined on $\mrG_{3}$. Suppose $F$ and $\phi$ are smooth functions on $[\mrG_{2}]$. We define
\begin{align}\label{def: Inewnotpairing}
   \mathcal{I}(s; \, \phi, \, F) \, := \,  \int_{[\mrG_{2}]}  \, \phi(g)F(g)  |\det g|^{s-\frac{1}{2}} \ dg \hspace{15pt} (s \, \in \, \C)
\end{align}
whenever the integral converges absolutely.  In our previous works \cite{Kw23, Kw25}, the integral (\ref{def: Inewnotpairing}) was instead denoted by $(\phi,\  (\mathbb{P}_{2}^{3}\widetilde{F})\, |\det *|^{\overline{s}-\frac{1}{2}}) _{\Gamma_{2}\setminus \mathrm{GL}_{2}(\R)}$, where $\widetilde{F}(g):= F(^{t}g^{-1})$ for $g\in \mrG_{3}$.

 
  
\section{Preliminary}\label{prel}

In this section, we collect results from the sources  \cite{Gold, Bump84} that are essential to our arguments. We have adjusted the conventions to strive for consistency and to follow the recent shift of conventions (closest to \cite{Bu20, Bu18}), which better align with those in the theory of automorphic representation.  Part of the preliminary results can also be found in  \cite{Kw23}. 

\subsection{Barnes' identities}\label{idsect: Barnes}

 	\begin{lem}\label{secBarn}
 	  For $a,b,c,d,e,f \in \C$ with $f=a+b+c+d+e$, we have
 		\begin{align}\label{secBar}
 			\int_{-i\infty}^{i\infty} \frac{\Gamma(w+a) \Gamma(w+b)\Gamma(w+c) \Gamma(d-w) \Gamma(e-w)}{\Gamma(w+f)} \ &\frac{dw}{2\pi i} \nonumber\\
 			& \hspace{-50pt}  \ = \   \frac{\Gamma(d+a) \Gamma(d+b) \Gamma(d+c) \Gamma(e+a) \Gamma(e+b)\Gamma(e+c)}{\Gamma(f-a) \Gamma(f-b) \Gamma(f-c)}.  
 		\end{align}
 \end{lem}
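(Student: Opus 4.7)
The plan is to evaluate the integral by contour closure and residue calculus, thereby reducing the claim to a classical hypergeometric summation. Under the Barnes convention, the vertical contour separates the ascending poles $\{-a-n,-b-n,-c-n\}_{n\geq 0}$ from the descending poles $\{d+n,e+n\}_{n\geq 0}$. I would close the contour to the right and apply the residue theorem. Using Stirling together with the reflection formula, the integrand decays like $w^{-2}$ along right-semicircular arcs, precisely because the balancedness $f=a+b+c+d+e$ cancels the leading-order growth; the arc-at-infinity contribution therefore vanishes, and the integral equals the sum of the residues at $w=d+n$ and $w=e+n$ for $n\geq 0$.

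Summing the residues at $w=d+n$ produces a Gamma-prefactor times
$$\sum_{n\geq 0}\frac{(a+d)_{n}(b+d)_{n}(c+d)_{n}}{(d+f)_{n}(1+d-e)_{n}\,n!}={}_{3}F_{2}(a+d,b+d,c+d;\,d+f,1+d-e;\,1),$$
and the residues at $w=e+n$ give the same expression with $d,e$ interchanged. The balancedness condition $f=a+b+c+d+e$ is exactly what makes each of these ${}_{3}F_{2}(1)$'s Saalsch\"utzian (the sum of the lower parameters plus $1$ equals the sum of the upper parameters).

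The identity then reduces to evaluating the sum of two balanced ${}_{3}F_{2}(1)$'s as a single Gamma-product, which is a well-known consequence of Thomae's three-term relations for ${}_{3}F_{2}(1)$; matching the resulting product against the right-hand side of \eqref{secBar} is a routine manipulation of Gamma functions. The main obstacle I expect is the non-terminating nature of the two Saalsch\"utzian series, since the elementary Pfaff-Saalsch\"utz theorem applies only to terminating series. The cleanest workaround is to first verify the identity at parameter values for which one of the series terminates (e.g.\ specializing so that $c+d$ is a non-positive integer), where Pfaff-Saalsch\"utz applies directly, and then extend to general parameters by analytic continuation in $c$: both sides of \eqref{secBar} are meromorphic in $c$ under the stated conditions, so agreement on an infinite discrete set forces equality everywhere.

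A cleaner alternative worth pursuing is to bypass hypergeometrics altogether by inserting the Beta-integral representation $\Gamma(w+c)/\Gamma(w+f)=\Gamma(f-c)^{-1}\int_{0}^{1}t^{w+c-1}(1-t)^{f-c-1}\,dt$, valid for $\re(f-c)>0$, swapping the $t$- and $w$-integrations, evaluating the inner $w$-integral as a shifted First Barnes-type integral in closed form, and finishing with an Euler Beta evaluation. This route deduces the Second Barnes Lemma from the First in essentially one step, at the cost of a Fubini justification which is straightforward under the stated parameter assumptions.
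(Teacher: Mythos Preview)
The paper does not actually prove this lemma: its ``proof'' consists solely of a citation to Bailey's monograph. Your sketch is therefore not comparable to anything in the paper, but it is a correct outline of the classical derivation---indeed, essentially the argument one finds in Bailey. A few minor remarks: the phrase ``right-semicircular arcs'' is slightly loose for Barnes-type integrals, where one more commonly shifts the vertical contour rightward in strips and shows the shifted integral tends to zero (the decay estimate you state is the right input for this); and your ``cleaner alternative'' via the Beta-integral reduction to the First Barnes Lemma is in fact the proof Bailey gives, so either route lands you squarely in the cited reference. Your identification of the non-terminating Saalsch\"utz issue and the analytic-continuation workaround is accurate.
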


\begin{proof}
	See \cite[Chapter 6.2]{Ba64}. This is known as the \emph{second Barnes lemma}. 
\end{proof}

\begin{lem}\label{PfaBarnes}
For $x>0$, we have
\begin{align}
    \mbF\left(\begin{matrix}
    a,\; b \\
    c
  \end{matrix}
  \Bigm|-x^{-1}
\right) \, = \, \frac{\Gamma(c)}{\Gamma(a)\Gamma(b)} \, \int_{-i\infty}^{i\infty} \, \frac{\Gamma(w+a)\Gamma(w+b)\Gamma(-w)}{\Gamma(w+c)} x^{-w} \, \frac{dw}{2\pi i}. 
\end{align}
\end{lem}

\begin{proof}
    See \cite[Chaper 1.6]{Ba64}.
\end{proof}

\subsection{Whittaker functions and transforms}\label{prelimwhitt}

It is well-known that the standard Whittaker function for $\ovmrG_{2}$ is given in terms of the $K$-Bessel function (see (\ref{def: Kbessfunc})): 
 \begin{align}\label{gl2wh}
 	W_{\mu}(g) \  := \ 2\sqrt{y} K_{\mu}(2\pi y) e(x)
 \end{align}
 for $\mu\in \C$ and  $g=\begin{psmallmatrix}
 		1 & x\\
 		& 1
 	\end{psmallmatrix} \begin{psmallmatrix}
 		y & \\
 		& 1
 	\end{psmallmatrix} \in \mathfrak{h}^{2}$. We have the following Mellin integral formula for $W_{\mu}$:
 \begin{lem}
 	For $\re w> -1/2+|\re \mu|$, we have
 	\begin{align}\label{melwhitgl2}
 		\int_{0}^{\infty} W_{\mu}(y) y^{w} \ d^{\times} y \ = \  \frac{1}{2} \  \Gamma_{\R}(w+1/2+\mu)\Gamma_{\R}(w+1/2-\mu). 
 	\end{align}
 \end{lem}

\begin{proof}
	Standard, see  \cite[eq. (2.5.2)]{Mo97} for instance.   
	\end{proof}

 For the group $\ovmrG_{3}$, we first introduce the power function 
 \begin{align}\label{defi: powerfunc}
 I_{\alpha}(y_{0}, y_{1}) \, = \, I_{\alpha}(\mby) \, :=  \, y_{0}^{1-\alpha_{3}} y_{1}^{1+\alpha_{1}}    
 \end{align}
  for $\mby\in \mathrm{Y}^{+}$ and  $\alpha\in\mathfrak{a}_{\C}$.\footnote{ We made an identification with the dual of the complexification of the standard Cartan subalgebra of $\mathfrak{sl}_{3}(\R)$. } This can be identified as a function defined on $\mathrm{U}_{3}\setminus \mathfrak{h}^{3}$ by the Iwasawa decomposition. The \emph{standard} Whittaker (or \emph{Jacquet--Whittaker}) function for $\ovmrG_{3}$ is defined by
	\begin{align}\label{jac}
 W_{\alpha}^{\pm}(g) \, := \, \prod_{1\le j<k\le 3} \Gamma_{\R}(1+\alpha_{j}-\alpha_{k}) \ \int_{\mrU_{3}}  \   I_{\alpha}(w_{\ell} \mbu g) \, \overline{\psi_{\pm}(\mbu)} \, d\mbu \hspace{20pt} (g \, \in \,  \mrG_{3}).
	\end{align}
The integral of (\ref{jac}) converges absolutely whenever $\re (\alpha_{1}-\alpha_{2})>0$ and $\re (\alpha_{2}-\alpha_{3})>0$, and $W_{\alpha}^{\pm}$ admits a holomorphic continuation to $\mathfrak{a}_{\C}$; see \cite[Chapter 5.5]{Gold}. Several normalizations of the standard Whittaker function are present in the literature. We opt for the one such that the functional equation $W_{\alpha}^{\pm}(g)=W_{\alpha^{w}}^{\pm}(g)$ holds for any $w\in W_{3}$ and $g\in \mrG_{3}$.

Moreover, we have $W_{\alpha}^{+}(\mby)=W_{\alpha}^{-}(\mby)$, and for brevity, we denote this quantity by $W_{\alpha}(\mby)=W_{\alpha}(y_{0}, y_{1})$.  Since $I_{\alpha}$ is an eigenfunction of $\mathcal{D}_{3}$ \footnote{ The ring of invariant differential operators on $\mathfrak{h}^3$.}, so is $W_{\alpha}$. Finally, we have following Mellin--Barnes integral formula (also known as the \textit{Vinogradov--Takhtadzhyan formula}) for $W_{\alpha}(\mby)$:

\begin{lem}\label{vtmellin}
Let $\balph :=\max_{1\le i\le 3} \, |\re \, \alpha_{i}|$.	 For any $\sigma_{0},\, \sigma_{1} > \balph$ and $\mby\in \mathrm{Y}^{+}$,  we have 
		\begin{align}\label{whittaker}
		W_{-\alpha}(\mby) \ =  \ \frac{1}{4} \  \int_{(\sigma_{0})}\int_{(\sigma_{1})} G_{\alpha}(s_{0},s_{1}) y_{0} ^{1-s_{0}} y_{1}^{1-s_{1}} \  \frac{ds_{0}}{2\pi i } \frac{ds_{1}}{2\pi i },
		\end{align}
         where
		\begin{align}\label{vtgamm}
		G_{\alpha}(s_{0},s_{1})  \ := \ \frac{\prod_{i=1}^{3} \Gamma_{\R}\left( s_{0}+\alpha_{i}\right) \Gamma_{\R}\left( s_{1}-\alpha_{i}\right)}{\Gamma_{\R}(s_{0}+s_{1})}. 
		\end{align}
		\end{lem}
		
\begin{proof}
See  \cite[Chapter X]{Bump84}.   
\end{proof}

\begin{rem}
The sign convention of the $\alpha_{i}$'s in (\ref{whittaker}) is consistent with \cite{Bu20}, but is opposite to that of  \cite[eqs. (6.1.4) \& (6.1.5)]{Gold}. It is important to keep track of this in Lemma \ref{stadediff1}, Propositions \ref{Eissimpl}--\ref{secMTcom}, and Section \ref{CFKRS}, where the symmetry must be matched judiciously with the CFKRS conjectures.
\end{rem}

 \begin{cor}\label{gl3whittbd}
Under Convention \ref{con: smallshift}, we have, for any $0< a_{i}< 1-\balph$\, and $ A_{i} \, > \, 0$ \, ($i=1,2$), that
 	\begin{align}\label{whitest3}
 	|W_{-\alpha}(\mby)| \ \ll_{A_{0}, \, A_{1}} \ y_{0}^{a_{0}}(1+y_{0})^{-A_{0}} y_{1}^{a_{1}} (1+y_{1})^{-A_{1}}, \hspace{20pt} y_{0}, \, y_{1} \ > \ 0. 
 	\end{align}
 	
 \end{cor}
 
 \begin{proof}
 	Follows directly from Lemma \ref{vtmellin}, contour shifting, and Stirling's formula.  Sharper and more uniform bounds can be found in \cite[Proposition 1]{Blo13} and \cite[Theorem 1]{BHM20}. 
 	\end{proof}

 We will need the explicit evaluation of the archimedean $\hbox{GL}(3)\times \hbox{GL}(2)$ Rankin--Selberg integral, which is a consequence of the second Barnes Lemma. This calculation also verifies the sign convention of (\ref{whittaker}).

 \begin{lem}\label{stadediff1}
For $\sigma> \balph + |\re \mu|$, we have
 	\begin{equation}\label{eqn sta1}
 \mathcal{Z}_{\infty}(s; \, W_{\mu},\,  W_{-\alpha}) \, := \, 	\int_{0}^{\infty} \int_{0}^{\infty} \,  W_{\mu}(y_{1})W_{-\alpha}(y_{0}, y_{1})  (y_{0}^2 y_{1})^{s-\frac{1}{2}} \  d^{*}\mby \ = \  \frac{1}{4} \,  \prod_{\pm} \prod_{k=1}^{3} \, \Gamma_{\R}\left(s\pm \mu-\alpha_{k}\right). 
 	\end{equation}
 \end{lem}

 \begin{proof}
 This was proved in \cite{Bump88}. To clarify our conventions, we provide a short proof here. Take $\sigma_{1}\in (\balph + |\re \mu|, \, \sigma)$. From Lemma \ref{vtmellin} and Mellin inversion, we have, for $\re s_{0}> \balph$, that
	\begin{align}
		\int_{0}^{\infty} \  (y_{0}y_{1})^{-1}W_{-\alpha}(\mby)\, y_{0}^{s_{0}}  \ d^{\times} y_{0} \ = \   \int_{(\sigma_{1})}  \ \frac{G_{\alpha}(s_{0}, s_{1})}{4} y_{1}^{-s_{1}} \ \frac{ds_{1}}{2\pi i}. \nonumber
	\end{align}
	Plugging this into the double integral of \eqref{eqn sta1} and  interchanging the order of  integration, we have
	\begin{align}
		\mathcal{Z}_{\infty}(s; \, W_{\mu},\,  W_{-\alpha}) 
		\ &=\   \int_{(\sigma_{1})}  \ \frac{G_{\alpha}(2s, s_{1})}{4} \, \int_{0}^{\infty} \ W_{\mu}(y_{1}) y_{1}^{s-\frac{1}{2}-s_{1}} \ d^{\times} y_{1} \ \frac{ds_{1}}{2\pi i}. \label{melopenup}
	\end{align}
	The innermost integral can be computed by (\ref{melwhitgl2}).  Together with  (\ref{vtgamm}), it follows that
	\begin{align}
		\mathcal{Z}_{\infty}(s; \, W_{\mu},\,  W_{-\alpha}) 
		\ = \ & \frac{\pi^{-3s}}{8}\,  \prod_{k=1}^{3}  \ \Gamma\left( s+\frac{\alpha_{k}}{2}\right) \,  \int_{(\sigma_{1})} \,  \frac{\prod_{k=1}^{3} \ \Gamma\left( \frac{s_{1}-\alpha_{k}}{2}\right)\cdot \Gamma\left(\frac{s+\mu-s_{1}}{2}\right) \Gamma\left(\frac{s-\mu-s_{1}}{2}\right)}{\Gamma\left( \frac{s_{1}}{2}+s\right)} \, \frac{ds_{1}}{2\pi i}. \nonumber
	\end{align}
	The desired result immediately follows from a change of variable $s_{1}\to 2 s_{1}$ and   (\ref{secBar}) with the choice of the parameters $	\left( a, b,c ;d; e\right)  :=  \left( -\frac{\alpha_{1}}{2}, \ -\frac{\alpha_{2}}{2}, \ -\frac{\alpha_{3}}{2}; \ \frac{s+\mu}{2}, \ \frac{s-\mu}{2}\right)$.
 \end{proof}


The following pair of integral transforms plays a central role in the analysis of this article.

	\begin{defi}\label{defwhittrans}
	Let  $h: (0, \infty) \rightarrow \C$ and $H: i\R \rightarrow \C$ be  measurable functions with $H(\mu)=H(-\mu)$. Let $W_{\mu}$ be given by (\ref{gl2wh}).  Then the Kontorovich--Lebedev transform of $h$ is defined by
	\begin{equation}\label{eqn whitranseq}
	h^{\#}(\mu) \, := \, \int_{0}^{\infty} h(y) W_{\mu}(y) \ \frac{dy}{y^2},   
	\end{equation}
	whereas its inverse transform is defined by 
	\begin{align}\label{invers}
	H^{\flat}(y) \, =  \, \int_{(0)}  \, H(\mu) W_{\mu}(y) \ d^{W}\mu, \hspace{15pt} \text{where} \hspace{10pt} d^{W}\mu \, := \, |\Gamma(\mu)|^{-2} \, \frac{d\mu}{4\pi i},
	\end{align}
	provided that the integrals above converge absolutely. 	
	\end{defi}

	\begin{lem}\label{inKLconv}
	Suppose $H\in \mathcal{C}_{\eta}$. The integral  (\ref{invers}) that defines $H^{\flat}$ converges absolutely, and
	\begin{align}\label{decbdd}
		 H^{\flat}(y) \ \ll \ (y+1/y)^{\eta} 
	\end{align}
   for any $y>0$. Moreover, the Whittaker--Plancherel formula holds for any $|\re \mu|< 2\eta$: 
    \begin{align}\label{plancherel}
		(H^{\flat})^{\#}(\mu) \ = \ H(\mu).  
		\end{align}
		\end{lem}
		
		\begin{proof}
   See \cite[Lemma 2.10]{Mo97}.
	\end{proof}

    \begin{rem}\label{theotherLW}
    From \cite[Lemma 2.6]{Mo97}, one also has $(h^{\#})^{\flat}(y)=h(y)$ for $h\in C_{c}^{\infty}(0,\infty)$. 
    \end{rem}

		\begin{lem}\label{mellingl2}
		Suppose $H\in \mathcal{C}_{\eta}$ and $h:= H^{\flat}$. On the strip $-1/2< \re w < \eta$, we have
		\begin{align}
			\widetilde{h}(w)  \ = \  \frac{1}{2} \int_{(0)}  \ H(\mu) \Gamma_{\R}(w+1/2+\mu)\Gamma_{\R}(w+1/2-\mu) \ d^{W}\mu. \label{mellin}
		\end{align}
	\end{lem}
	
	\begin{proof}
	Follows directly from (\ref{plancherel}) and  (\ref{melwhitgl2}). 
	\end{proof}


\subsection{Automorphic preliminaries}\label{autformgl3}

\subsubsection{$\mathrm{PGL}(2)$}\label{someprelimgl2}
The invariant differential operator on $\mathfrak{h}^2$ is the hyperbolic Laplacian $\Delta:= -y^2( \partial_{x}^2 +\partial_{y}^2)$. An automorphic form  $\phi:  \mathfrak{h}^2 \rightarrow \C$ of $\Gamma_{2}=\mathrm{SL}_{2}(\Z)$ satisfies  $\Delta\phi = ( 1/4 -\mu^2) \phi$ for some $\mu= \mu(\phi) \in \C$. The Fourier coefficient of $\phi$, denoted by $\mathcal{B}_{\phi}(a)$, is defined by
\begin{equation}\label{eiscuspcoeff}
\mathcal{W}_{a}(y; \phi) \ := \  \int_{\Z\setminus \R} \ \phi\left[ \begin{pmatrix}
1 & x\\
& 1
\end{pmatrix} \begin{pmatrix}
y & \\
& 1
\end{pmatrix}\right] e(-ax) \, dx \ = \  \frac{\mathcal{B}_{\phi}(a)}{\sqrt{|a|}}  W_{\mu(\phi)}(|a|y)
\end{equation}
for any $a\in \Z-\{0 \}$ and $y>0$.

Let $I_{\mu}(y):= y^{\mu+\frac{1}{2}}$, which can be regarded as a function on $\mathrm{U}_{2}\setminus\mathfrak{h}^2$. The Eisenstein series of $\Gamma_{2}$ is defined by 
		\begin{align}\label{gl2eins}
	E(g; \mu) \ := \  \frac{1}{2} \  \sum_{\gamma\in [\Gamma_{2}]} \, I_{\mu}(\gamma g) \hspace{20pt} (g \,\in \, \mrG_{2}).
		\end{align}
      The series (\ref{gl2eins}) converges absolutely for $\re \mu>1/2$ and admits a meromorphic continuation to $\C$; see \cite[Chapter 3.1]{Gold}.   We have $\Delta E(*; \mu) = ( 1/4-\mu^2) E(*; \mu)$, and  the Fourier coefficients of $E(*;\mu)$ are explicitly given by
	\begin{align}\label{eisfournorm}
		\mathcal{B}_{\mu}(a) \ = \ \frac{|a|^{\mu}\sigma_{-2\mu}(|a|)}{\Lambda(1+2\mu)}, 
		\end{align}	  
		where $\Lambda(s):= \pi^{-s/2}\Gamma(s/2)\zeta(s)$ and  $\sigma_{-2\mu}(|a|) \ := \  \sum_{d \mid a} d^{-2\mu}$.

\subsubsection{$\mathrm{PGL}(3)$}
Suppose  $\Phi: \mathfrak{h}^3 \rightarrow \C$ is an automorphic form of $\Gamma_{3}:= \mathrm{SL}_{3}(\Z)$. By the Harish-Chandra isomorphism, there exists  $\alpha= \alpha(\Phi)\in \mathfrak{a}_{\C}$ such that for any $D\in \mathcal{D}_{3}$, we have
\begin{align*}
D\Phi \ = \  \lambda_{\alpha}(D) \Phi \hspace{20pt} \text{ and } \hspace{20pt} 
DI_{\alpha} \ = \ \lambda_{\alpha}(D) I_{\alpha}
\end{align*}
for some $\lambda_{\alpha}(D) \in \C$, where $I_{\alpha}$ is given by (\ref{defi: powerfunc}). The triple $\alpha$ is said to be the spectral parameters of $\Phi$. The spectral parameters of the dual form $\widetilde{\Phi}(g):= \Phi( ^{t}g^{-1})$ are given by $-\alpha$.

\begin{defi}\label{fourcoeff}
	Let $(m_{1}, m_{2})\in \Z^{2}$ and $\Phi:  \Gamma_{3}\setminus \mathfrak{h}^3 \rightarrow \C$ be an automorphic form. The  $(m_{1}, m_{2})$-th \textbf{Fourier--Whittaker period} of $\Phi$ refers to the integral 
	\begin{align}\label{fourcoeff2}
	\mathcal{W}_{(m_{1}, m_{2})}(g; \, \Phi) \ &:= \ \int_{[\mrU_{3}]}\Phi(\mbu g) \overline{\psi_{(m_{1},m_{2})}(\mbu)} \ d\mbu \hspace{60pt} (g\in \mathrm{G}_{3}). 
	\end{align} 
When $(m_{1}, m_{2})\in (\Z-\{0\})^{2}$, the $(m_{1}, m_{2})$-th \textbf{Fourier coefficient} of $\Phi$ refers to the complex number  $\mathcal{B}_{\Phi}(m_{1}, m_{2})$ such that
	\begin{align}
		\mathcal{W}_{(m_{1}, m_{2})}(\mby; \, \Phi) \  = \   \frac{\mathcal{B}_{\Phi}(m_{1}, m_{2})}{|m_{1}m_{2}|} \  W_{\alpha(\Phi)}(|m_{1}|y_{0}, \, |m_{2}|y_{1}) \hspace{20pt} (\mby\, \in \, \mathrm{Y}^{+}).
	 \end{align}
\end{defi}

\begin{lem}\label{genFourlem}
	Let $\Phi: [\ovmrG_{3}]\rightarrow \C$ be a smooth automorphic function. Then \footnote{ Notice that we are using the shorthand (\ref{matrixshort}).}
	\begin{align}\label{nocuspfou}
		\Phi(g) \ = \  \mathcal{W}_{(0,0)}(g; \, \Phi) \  + \ \sum_{n_{2}\neq 0}  \  \mathcal{W}_{(0, n_{2})}(g; \, \Phi) \ + \ \sum_{\gamma\in [\Gamma_{2}]} 
		\ \sum_{n_{1}= 1}^{\infty}  \ \mathcal{W}_{(n_{1},0)}(\gamma g; \, \Phi) \ + \  \Phi^{\mathrm{ND}}(g)
	\end{align}
	for $g\in \mrG_{3}$, where $\Phi^{\mathrm{ND}}$ denotes the non-degenerate part of $\Phi$: 
	\begin{align}\label{defnondeg}
		\Phi^{\mathrm{ND}}(g) \ := \ \sum_{\gamma \in [\Gamma_{2}]} \ \sum_{m_{1}=1}^{\infty} \  \sum_{m_{2}\neq 0} \mathcal{W}_{(m_{1},m_{2})}( \gamma g; \, \Phi).
	\end{align}
\end{lem}

\begin{proof}
	See \cite[Proposition 4.2]{IY15}. 
\end{proof}

The following identity is central to our method.

\begin{prop}\label{incomf}
	For any smooth function $\Phi: [\ovmrG_{3}]\rightarrow \C$, we have, for any $g\in \mrG_{3}$, that 
	\begin{align}\label{keylemform}
	\mathcal{P}_{\psi}(g; \, \Phi) \, := \, 	\int_{[\mrN_{12}]} \Phi(\mbn g) \overline{\psi(\mbn)} \ d\mbn 
		\ = \  \sum_{a_{0}\in \Z} \, \sum_{a_{1}\in \Z} \,  \mathcal{W}_{(a_{1},1)}\big(
		\mbn_{32}(-a_{0})g;\, \Phi\big). 
	\end{align}
\end{prop}

\begin{proof}
	See \cite[Proposition 6.1]{Kw23}. 
\end{proof}

\begin{rem}\label{rem: project}
As explained in \cite[Section 4]{Kw23}, it is equivalent to consider the unipotent period
\begin{align}\label{equivunip}
 \int_{[\mrN_{23}]}  \, \Psi\left[\mbn \left(
	\begin{smallmatrix}
	y	& & \\
		& 1& \\
		&   & 1
	\end{smallmatrix} \right)\right] \overline{\psi(\mbn)} \,  d\mbn
\end{align}
for $\Psi:= \rho(w_{\ell})\Phi$. Recall the following construction used in the integral representation of the $\mathrm{GL}_{3}$ standard $L$-function (see \cite[Chapters 6.5 \& 12.3]{Gold}):
    \begin{align}\label{eqn: unipad}
    (\mathbb{P}_{1}^{3}\Psi)(g) \ := \ \iint_{(\Z\setminus \R)^2} \, \Psi\left[ \begin{psmallmatrix}
        1 &  &  v_{1} \\
          & 1 & v_{2} \\
          &   & 1
    \end{psmallmatrix}\begin{psmallmatrix}
        g & \\
          & 1
    \end{psmallmatrix}\right] \overline{\psi}(v_{2}) \, dv
\end{align}
for any $g\in \mrG_{2}$. Then the corresponding identity for (\ref{equivunip}) is
\begin{align}\label{projiden}
\int_{[\mrN_{23}]}  \, \Psi\left[\mbn \left(
	\begin{smallmatrix}
	y	& & \\
		& 1& \\
		&   & 1
	\end{smallmatrix} \right)\right] \overline{\psi(\mbn)} \,  d\mbn \, = \, \, ( \mathbb{P}_{1}^{3}\Psi)\begin{psmallmatrix}
        y & \\
          & 1
    \end{psmallmatrix}  \ + \  \sum_{\pm} \, \sum_{a=1}^{\infty} \  \,   (\mathbb{P}_{1}^{3}\Psi)\left[\left(\begin{smallmatrix}
	      1 &  \\
		\pm a & 1 
	\end{smallmatrix}\right) \left(\begin{smallmatrix}
	y   &  \\
		& 1
	\end{smallmatrix} \right)\right].   
\end{align}

Due to (\ref{fourcoeff2}) and $(\mathbb{P}_{1}^{3}\Psi)(g) \, = \, \sum_{n\neq 0} \, \mathcal{W}_{(1,n)}(g; \,\Psi)$, observe that upon inserting the second term on the right-hand side of (\ref{projiden}) into our Fourier--Hecke period ((\ref{trivstart}) or (\ref{modperiodcons})) and applying Mellin inversion, one obtains an integral moment of a product of the $\zeta$-function (from the $a$-sum) and the $L$-function of $\Phi$ (from the $n$-sum; see (\ref{gl3stdL})), with the integral transform given in (\ref{gener}). These steps were performed carefully in our previous works \cite{Kw23, Kw25} and will be reviewed in Sections \ref{diagpreoff} and \ref{offdiagEis}. 
 \end{rem}


\subsubsection{Eisenstein series for $\mathrm{PGL}(3)$}\label{Gl3Eiseri}

We require explicit computations of the Fourier--Whittaker periods for the Eisenstein series of $\Gamma_{3}$ in order to package the moment $ \mathfrak{M}_{-\alpha}^{(3)}(s;  H)$ as a period, extract the main terms and recast them according to the CFKRS conjectures, and carry out the analytic continuation argument of Section \ref{proofEismaTHm}.

A standard reference is \cite[Chapter VII]{Bump84}; see especially Theorem 7.2. For normalizations of the degenerate Whittaker functions of $\ovmrG_{3}$,  readers should refer to \cite[Chapter III]{Bump84}. It is also helpful to consult the reformulation by  \cite[Section 4]{Bu18}, which is more streamlined and up-to-date. Readers should beware that \cite{Bu18} uses the \textit{``incomplete''} Whittaker functions, whereas both this article and  \cite{Bump84} use the \textit{``complete''} ones.

\begin{defi}
	 The minimal parabolic Eisenstein series of $\Gamma_{3}$ is defined by
	\begin{align}\label{minparaEis}
	E_{\min}^{(3)}(g; \alpha) \ := \ 	\sum_{\gamma \in [\Gamma_{3}]} \ I_{\alpha}(\gamma g) \hspace{20pt} (g  \,\in \,  \mrG_{3}).
	\end{align}
	\end{defi}

The series (\ref{minparaEis}) converges absolutely on $\big\{\alpha\in \mathfrak{a}_{\C} :\,  \re (\alpha_{1}-\alpha_{2}) > 1, \, \re (\alpha_{2}-\alpha_{3}) > 1\big\}$ (see \cite[Section 4.1]{Bu18}). Its meromorphic continuation to $\mathfrak{a}_{\C}$ and functional equation are established explicitly in \cite[Chapter 8]{Bump84} and \cite{Bu18}. Langlands proved these results in great generality.

We must keep track of the degenerate terms in the Fourier expansion of $E_{\min}^{(3)}(g; \alpha)$  for our applications. We prefer to apply the more compact form of the expansion (Lemma \ref{genFourlem}), rather than the fully explicit form used in  \cite{Li14},  because it better detects: 
\begin{enumerate}
	\item the equivariance of the Fourier--Whittaker \textit{periods} under  unipotent translations;  and
	
	\item the annihilation of undesirable degenerate terms in the Fourier expansion; see Lemma \ref{lem: whywecanreg}.
\end{enumerate}
These two features yield much simpler formulae and a clearer presentation.

Recall that the Weyl group $W_{3}$ of $\hbox{GL}(3)$ consists of the permutation matrices
\begin{align}\label{Weyletl}
	\hspace{8pt} I \, &= \,  \begin{psmallmatrix}
		1 &   & \\
		& 1 & \\
		&    & 1
	\end{psmallmatrix}, \hspace{10pt}  w_{2} \, = \,  - \begin{psmallmatrix}
		& 1  & \\
		1	&  & \\
		&    & 1
	\end{psmallmatrix},  \hspace{10pt}  w_{3} \, = \,  - \begin{psmallmatrix}
		1&   & \\
		&  &1 \\
		&  1  & 
	\end{psmallmatrix},  \hspace{10pt} 
	w_{4} \, = \,  \begin{psmallmatrix}
		& 1  & \\
		& & 1 \\
		1	&    & 
	\end{psmallmatrix}, \hspace{10pt}  
	w_{5} \, = \,  - \begin{psmallmatrix}
		&   & 1\\
		1	&  & \\
		&   1 & 
	\end{psmallmatrix},  \hspace{10pt}  w_{\ell} \, = \,  - \begin{psmallmatrix}
		&   &1 \\
		& 1 &\\
		1	&    & 
	\end{psmallmatrix}. 
\end{align}
 The actions of $w\in W_{3}$ on $\mathfrak{a}_{\C}$ are given by:
\begin{align}\label{weylactLan}
	(\alpha_{1}, \alpha_{2}, \alpha_{3})^{I} \ &= \  	(\alpha_{1}, \alpha_{2}, \alpha_{3}), \hspace{15pt} 	(\alpha_{1}, \alpha_{2}, \alpha_{3})^{w_{2}} \ = \  	(\alpha_{2}, \alpha_{1}, \alpha_{3}), \hspace{15pt}  	(\alpha_{1}, \alpha_{2}, \alpha_{3})^{w_{3}} \ = \  	(\alpha_{1}, \alpha_{3}, \alpha_{2}),  \nonumber \\
	(\alpha_{1}, \alpha_{2}, \alpha_{3})^{w_{4}} \ &= \  	(\alpha_{3}, \alpha_{1}, \alpha_{2}), \hspace{15pt} 	(\alpha_{1}, \alpha_{2}, \alpha_{3})^{w_{5}} \ = \  	(\alpha_{2}, \alpha_{3}, \alpha_{1}), \hspace{15pt}  	(\alpha_{1}, \alpha_{2}, \alpha_{3})^{w_{\ell}} \ = \  	(\alpha_{3}, \alpha_{2}, \alpha_{1}).   
\end{align}
Translating \cite[eqs. (3.10)--(3.15), (3.40) \& (3.45)]{Bump84} into the notations of this article, we have  

\begin{defi}
	The degenerate Whittaker functions of $\ovmrG_{3}$ are defined as follows:
	\begin{align}\label{degWhitBes}
	W_{\alpha,\, w}^{(0,1)}(\mby) \ &:= \  (y_{0}^{2}y_{1})^{(1-\alpha_{3}^{w})/2} \,  \Gamma_{\R}\left(1+\alpha_{2}^{w}-\alpha_{3}^{w}\right)\Gamma_{\R}\left(1+\alpha_{1}^{w}-\alpha_{3}^{w}\right)\,  W_{(\alpha_{1}^{w}-\alpha_{2}^{w})/2}(y_{1}) 
	\end{align}
	and 
	\begin{align}\label{anodegWhi}
W_{\alpha,\, w}^{(1,0)}(\mby)  \ := \ (y_{0} y_{1}^2)^{(1+\alpha_{1}^{w})/2} \, \Gamma_{\R}\left( 1+\alpha_{1}^{w}-\alpha_{2}^{w}\right) \Gamma_{\R}\left(1+ \alpha_{1}^{w}-\alpha_{3}^{w}\right) W_{(\alpha_{2}^{w}-\alpha_{3}^{w})/2}(y_{0}),
	\end{align} 
    where $w\in W_{3}$, $\alpha\in \mathfrak{a}_{\C}$, $\mby\in \mathrm{Y}^+$, and $ W_{(\alpha_{i}^{w}-\alpha_{i+1}^{w})/2}$ ($i=1,2$) are the standard Whittaker functions of $\ovmrG_{2}$.
\end{defi}

It is clear that (\ref{degWhitBes}) and (\ref{anodegWhi}) grow polynomially as  $y_{0}\to \infty$ and $y_{1}\to \infty$ respectively.  It is more convenient to consider the  \textit{complete} minimal parabolic Eisenstein series of $\Gamma_{3}$, which is defined by 
\begin{align}\label{compEis}
	(E_{\min}^{(3)})^{*}( \ * \ ; \ \alpha) \ := \   \prod_{1\le i<j\le 3} \, \Lambda(1+ \alpha_{i}-\alpha_{j})E_{\min}^{(3)}( \ * \ ; \ \alpha). 
\end{align}
The completion factors in (\ref{compEis}) simplify many formulae and ensure that the Hecke combinatorics of the non-degenerate Fourier coefficients coincide in both the cuspidal and Eisenstein cases. They also obviate the need to consider the zero-free region of the $\zeta$-function in our argument.   (Note: the formulae stated in \cite{Li14} and \cite{Bu18} concern the \textit{incomplete} Eisenstein series.)

\textbf{From this point onward, the automorphic form  $\Phi$  refers solely to (\ref{compEis}).} We record two results regarding the explicit evaluations of the degenerate Fourier--Whittaker periods of $\Phi$.   

\begin{lem}\label{expldegterm}
	For any $\mby\in \mathrm{Y}^{+}$, $\alpha\in \mathfrak{a}_{\C}$, and $n_{1}\in \Z- \{0\}$, we have
	\begin{align}\label{butForum}
		\mathcal{W}_{(0,1)}(\mby; \, \Phi) \ =   \  \sum_{w=w_{2}, w_{4}, w_{\ell}} \ \zeta(w, \alpha, \psi_{(0,1)}) W^{(0,1)}_{\alpha,\, w}(\mby)
	\end{align}
    and 
    \begin{align}
		\mathcal{W}_{(n_{1},0)}(\mby; \, \Phi) \ =  \   \sum_{w=w_{3}, w_{5}, w_{\ell}} \ \frac{\zeta(w, \alpha, \psi_{(n_{1},0)})}{|n_{1}|^{1-\alpha_{3}^{w}}}\  W^{(1,0)}_{\alpha, \, w}(n_{1}y_{0},\, y_{1}),
	\end{align}
	where 
	\begin{align}
		\zeta\big(w, \alpha, \psi_{(0,1)}\big) \ := \   \zeta\big( 1+\alpha_{2}^{w}-\alpha_{3}^{w}\big) \zeta\big( 1+\alpha_{1}^{w}-\alpha_{3}^{w}\big),
	\end{align}
     and 
	\begin{align}\label{zetaexpldegterm}
		\zeta\big(w, \alpha, \psi_{(n_{1},0)}\big) \ := \  \zeta\big(1+ \alpha_{1}^{w}-\alpha_{2}^{w}\big)\zeta\big(1+ \alpha_{1}^{w}-\alpha_{3}^{w}\big) \sigma_{\alpha_{2}^{w}-\alpha_{3}^{w}}(|n_{1}|). 
	\end{align}
\end{lem}

\begin{proof}
	See \cite[eqs. (7.7)--(7.8)]{Bump84} or  \cite[eqs. (4.6)--(4.7)]{Bu18}. 
\end{proof}


\subsubsection{Automorphic $L$-functions}\label{autoLfunc}

Suppose $\Phi$ is given by (\ref{compEis}) and $\phi$ is an automorphic form of $\Gamma_{2}$ as in Section \ref{someprelimgl2}. Recall that 
\begin{defi}\label{JPSSstdL}
	For $\sigma  >  3/2$,  the standard $L$-functions of $\phi$ and  $\Phi$ are defined, respectively, by
		\begin{align}
        L(s, \phi)\ := \ \sum_{n=1}^{\infty} \ \frac{\mathcal{B}_{\phi}(n)}{n^{s}} \hspace{15pt} \text{and} \hspace{15pt}
			L(s, \Phi)\ := \ \sum_{n=1}^{\infty} \ \frac{\mathcal{B}_{\Phi}(1,n)}{n^{s}}. \label{gl3stdL}
		\end{align}
	\end{defi}

By \cite[Theorem 10.8.6]{Gold}, we have
\begin{align}\label{gl3euspl}
	L(s, \Phi)  \ = \  \zeta(s+\alpha_{1}) \zeta(s+\alpha_{2})\zeta(s+\alpha_{3}). 
\end{align}
When $\phi$ is an even cusp form, we have the functional equation (see \cite[Proposition 3.13.5]{Gold}):
\begin{align}\label{Heckefe}
    \Lambda(s, \phi) \ = \ \prod_{\pm} \, \Gamma_{\R}(s\pm \mu) L(s, \phi) \ = \  \Lambda(1-s, \phi).   
\end{align}

	\begin{defi}\label{DScuspRS}
		 For $\sigma > 3/2$, the Rankin--Selberg $L$-function of $\Phi$ and $\phi$ is defined by
	\begin{align}\label{rankse}
		L(s, \phi\otimes \Phi) \ &:= \    \sum_{m_{1}=1}^{\infty} \ \sum_{m_{2}= 1}^{\infty}  \frac{\mathcal{B}_{\phi}(m_{2})  \mathcal{B}_{\Phi}(m_{1}, m_{2} )}{ (m_{1}^2 m_{2})^{s}}. 
	\end{align}
\end{defi}

	\begin{lem}\label{ranselmainthm}
	 Suppose $\phi$ is an \emph{even} form of $\Gamma_{2}$. Then for $\sigma > 3/2$, we have
		\begin{align}\label{sameparun}
		  \mathcal{I}\big(s; \, \phi, \, \widetilde{\Phi}^{\mathrm{ND}}\big)  \ = \ 	\frac{1}{2} \, \Lambda(s,\,  \phi\otimes \widetilde{\Phi}),
		\end{align}
	where 
		\begin{align}
			\Lambda(s,\, \phi\otimes \widetilde{\Phi}) \ &:= \ L_{\infty}(s,\, \phi\otimes \widetilde{\Phi})\cdot L(s, \,\phi\otimes \widetilde{\Phi})  \, := \,  \prod_{\pm} \, \prod_{k=1}^{3} \, \Gamma_{\R}(s\pm  \mu- \alpha_{k}) \cdot L(s,\, \phi\otimes \widetilde{\Phi}). 
		\end{align}
	\end{lem}
	
	\begin{proof}
Recall (\ref{defnondeg}) and (\ref{def: Inewnotpairing}). Since $\widetilde{\Phi}^{\mathrm{ND}}$ is left-invariant by $\begin{psmallmatrix}
	\gamma &   \\
	& 1
\end{psmallmatrix}$ for any $\gamma\in \Gamma_{2}$, the pairing in (\ref{sameparun}) is well-defined. The rest of the proof follows \cite[Proposition 5.15]{Kw23} (or \cite[Chapter 12.2]{Gold}). 
\end{proof}

The following is an elementary consequence of Lemma \ref{ranselmainthm}:

	\begin{cor}\label{ranseleis}
		For $\sigma > 3/2$, we have
        \begin{align}\label{cusEis}
				 	\mathcal{I}\big(s; \, \phi,\,  \widetilde{\Phi}^{\mathrm{ND}}\big) \   = \   \frac{1}{2} \  \prod_{i=1}^{3} \, \Lambda(s- \alpha_{i}, \phi),
			\end{align}
            and 
            \begin{align}\label{EIsEis}
					\mathcal{I}\big(s; \, E(*; \mu),\,  \widetilde{\Phi}^{\mathrm{ND}}\big) \   = \   \frac{1}{2} \    \frac{ \prod_{\pm} \prod_{i=1}^{3} \, \Lambda( s\pm  \mu -\alpha_{i} )}{\Lambda(1+2\mu)}. 
			\end{align}
	\end{cor}

	\begin{proof}
	    The standard $L$-functions $L(s, \phi)$ and $L(s, \Phi)$ admit Euler products of the form
\begin{align}
L(s, \,\phi) \ = \  \prod_{p} \, \prod_{j=1}^{2} \, (1-\beta_{\phi, j}(p)p^{-s})^{-1} \hspace{15pt} \text{and} \hspace{15pt}
  L(s, \, \Phi) \ = \  \prod_{p} \, \prod_{k=1}^{3} \, (1- p^{-\alpha_{k}}p^{-s})^{-1}\nonumber
\end{align}
for $\sigma > 3/2$, and we have $\{ \beta_{\phi,1}(p), \beta_{\phi,2}(p) \} = \{ p^{\mu}, p^{-\mu}  \}$  if $\phi= E^{*}(*;\mu)$. The results follow from Cauchy's identity in the form \cite[Proposition 7.4.12]{Gold}, i.e.,
\begin{align}
L(s,\, \phi \otimes \Phi) \ = \   \prod_{p} \ \prod_{j=1}^{2}  \ \prod_{k=1}^{3}  \,\left(1- \beta_{\phi, j}(p)p^{-\alpha_{k}}\,p^{-s}\right)^{-1}.\nonumber
\end{align}
	\end{proof}



\section{The integral transform}\label{Stirl}

Let $h= H^{\flat}$ and $G_{\alpha}$ as in (\ref{invers}) and (\ref{vtgamm}). In \cite[Section 5.3]{Kw25}, we show that the integral transform associated with Theorem \ref{maingl3gl2Eiscase} admits the following simple integral representation: \footnote{ The same transform was defined in \cite{Kw23, Kw25} under the notation $(\mathcal{F}_{\Phi}H)(s_{0}, \, s)$. In fact, it depends only on the spectral parameters $\alpha = (\alpha_{1}, \alpha_{2}, \alpha_{3})$, and hence, we adopt the present notation.} 
\begin{align*}
   (\mathcal{F}_{\alpha}H)(s_{0}, s) \, = \,  \sum_{\pm} \, (\mathcal{F}_{\alpha}^{\pm}H)(s_{0}, s),
\end{align*}
where
\begin{align}\label{gener}
 (\mathcal{F}^{\pm}_{\Phi} H)\left(s_{0},  s \right)   :=   \int_{0}^{\infty} \int_{0}^{\infty} h(y_{1}^{-1}) (y_{0}^2 y_{1}^{-1})^{s-\frac{1}{2}}  \int_{0}^{\infty} W_{\rho(w_{\ell}) \Phi} \left[\begin{psmallmatrix}
	\pm x & & \\
	1 & 1& \\
	   &  & 1
\end{psmallmatrix}\begin{psmallmatrix}
	y_{0}y_{1} & & \\
	      & y_{1}& \\
	   &  & 1
\end{psmallmatrix} \right] x^{s_{0}-1} \, d^{\times} x \, d^{\times} y_{0} \, dy_{1};
\end{align}
see also Remark \ref{rem: project}. Since $\Phi$ is assumed to be spherical, the above can be written as
\begin{align}\label{duaintrafir}
(\mathcal{F}^{\pm}_{\alpha} H)\left(s_{0},  s \right)  \ = \  \int_{0}^{\infty}  \int_{0}^{\infty}   \ h\Big( \frac{y_{1}}{\sqrt{1+y_{0}^2}}\Big) \frac{ y_{0}^{2s-s_{0}}y_{1}^{s-\frac{1}{2}} }{(1+y_{0}^{2})^{\frac{s}{2}-s_{0}+\frac{1}{4}}} \int_{0}^{\infty}  W_{-\alpha}(x,   y_{1})  e(\pm xy_{0})  x^{s_{0}-1}\,  d^{\times} x\,  d^{*}\mby
\end{align}
as in \cite[Section 4B]{Kw23}. The innermost integral over $x$ reflects the non-archimedean structure of the dual moment (\ref{dual4thzeta}), as expected. By Lemma \ref{vtmellin},  it follows that $(\mathcal{F}_{\alpha} H)(s_{0}, s )$ is equal to the limit of \, $\sum_{\pm} \, (\mathcal{F}_{\alpha}^{\pm}H)(s_{0}, \, s; \, \phi)$ as $\phi\to \pi/2-$, where
\begin{align}\label{mainintrans}
	(\mathcal{F}_{\alpha}^{\pm}H)(s_{0}, \, s; \, \phi) \   &:=\  	 \int_{(\sigma_{1})} \int_{(\epsilon_{0})}  \  \widetilde{h}(s-s_{1}-1/2) \, \mathcal{G}_{\alpha}^{\pm}( s_{1}, u; \, s_{0}, s;  \, \phi) \   \frac{du}{2\pi i}   \frac{ds_{1}}{2\pi i }
			\end{align}
	for any $15\le \sigma_{1}\le \eta-1/2$, and
			\begin{align}\label{kernelG}
				\mathcal{G}_{\alpha}^{\pm}\left( s_{1}, u; \ s_{0}, s;  \ \phi \right) \ := \  &  G_{\alpha}(s_{0}-u, s_{1})   \,   (2\pi )^{-u} e^{\pm i\phi u}  \Gamma(u) \, \frac{\Gamma\big( \frac{u+1-2s+s_{1}-s_{0}}{2}\big) \Gamma\big(\frac{2s-s_{0}-u}{2}\big) }{\Gamma\big(\frac{s_{1}+1}{2}-s_{0}\big)}. 
			\end{align}
More explicitly, we have \footnote{ This corrects a minor typo in \cite[eq. (9-13)]{Kw23}, where the factor $\pi^{-s_{0}}$ is missing.}
\begin{align}\label{streamlinebarnes}
				(\mathcal{F}_{\alpha} H)\left(s_{0},\,  s \right) 
				\ \ = \ \  &  \pi^{1/2-s_{0}} \  \int_{(\sigma_{1})}\,   \widetilde{h}\big(s-s_{1}-1/2\big) \pi^{-s_{1}}  \,\frac{\prod_{i=1}^{3} \ \Gamma\big( \frac{s_{1}-\alpha_{i}}{2}\big)  }{ \Gamma\big(\frac{1+s_{1}}{2}-s_{0}\big)} \nonumber\\
				& \hspace{70pt} \cdot  \,\int_{(\epsilon_{0})}\,   \frac{ \Gamma\big( \frac{u}{2}\big) \Gamma\big( \frac{u+1-2s+s_{1}-s_{0}}{2}\big)    \prod_{i=1}^{3}\Gamma\big( \frac{s_{0}+\alpha_{i}-u}{2}\big)  \Gamma\big(\frac{2s-s_{0}-u}{2}\big) }{\Gamma\big( \frac{1-u}{2}\big) \Gamma\big(\frac{s_{0}+s_{1}-u}{2}\big)}  \  \frac{du}{2\pi i}   \ \frac{ds_{1}}{2\pi i }.
			\end{align}
     We write
			\begin{align*}
				s \ = \ \sigma+it, \hspace{15pt}  s_{0}=\sigma_{0}+it_{0},  \hspace{15pt} s_{1}=\sigma_{1}+it_{1},  \hspace{15pt} \text{ and }  \hspace{15pt} u 
				= \ \epsilon_{0}+iv. 
			\end{align*}

			\begin{prop}\label{anconpr}
				Suppose  $H\in \mathcal{C}_{\eta}$ and $T_{0}\ge 1000$. 
				\begin{enumerate}
					\item For any $\phi \in (0, \pi/2]$, the transform $(\mathcal{F}_{\alpha}^{\pm}H)(s_{0}, s; \,\phi)$ is holomorphic on the triangular domain
			\begin{align}\label{newdomain}
					\mathcal{D} \, := \,  \Big\{(\sigma_{0},\, \sigma): \, 	\sigma_{0} \, > \, \big(1+\frac{1}{500}\big)\epsilon_{0}, \hspace{5pt}   \sigma \, < \, 4, \hspace{5pt}  \text{ and } \hspace{5pt}   2\sigma-\sigma_{0} \ > \ \epsilon_{0} \Big\}.  
					\end{align}
					
					\item Whenever $(\sigma_{0}, \,\sigma) \in \mathcal{D}$,  $|t| <T_{0}$, and \, $\phi \in (0, \pi/2)$, the transform $(\mathcal{F}_{\alpha}^{\pm}H)(s_{0}, \, s; \, \phi)$ has exponential decay as  $|t_{0}| \to \infty$, more precisely:
\begin{align*}
    |(\mathcal{F}_{\alpha}^{\pm}H)(s_{0}, \, s; \, \phi) | \, \ll_{T_{0}} \,  \exp\big(-(1/2)(\pi/2-\phi)|t_{0}|\big).
\end{align*}
                    
\item  Whenever $(\sigma_{0}, \, \sigma) \in \mathcal{D}$, $|t| <T_{0}$, and $|t_{0}| \gg_{T_{0}} 1$,  we have:
					\begin{align}\label{refbdd}
						|	(\mathcal{F}_{\alpha}^{\pm}H)(s_{0}, \, s; \, \pi/2) | \ \ll_{T_{0}} \  |t_{0}|^{8+\epsilon_{0}-\eta/2}. 
					\end{align}
				\end{enumerate}
			\end{prop}

\begin{proof}
    See \cite[Propositions 8.1 and 9.1]{Kw23}. \footnote{ The first part of Proposition \ref{anconpr} is obtained by inspecting (\ref{streamlinebarnes}). }
\end{proof}

\begin{rem}
    If one were analyzing the moment $ \mathfrak{M}_{-\alpha}^{(3)}(s;  H)$ with the Kuznetsov formulae, one must treat the ``$J$-Bessel'' (``same-sign'') and the ``$K$-Bessel'' (``opposite-sign'') pieces separately (cf.  \cite[Theorems 2.2 \& 2.4]{Mo97}). By contrast, our period integral approach yields a single off-diagonal piece involving the $\hbox{PGL}(3)$ Whittaker function (see (\ref{secondcc})), and this results in much simpler computations. 
\end{rem}


Observant readers may wonder about the formulation of Theorem \ref{maingl3gl2Eiscase}. Since we symmetrize the spectral side to obtain a clean description of the main terms, one may likewise expect the dual side to involve the \emph{completed} $L$-functions. This is indeed the case. It is implicit in \cite[Theorem 10.6]{Kw23}, and is important for reasons to be explained in Remark \ref{checktransform}. Furthermore, this permits us to rewrite the integral transform symmetrically in terms of the \emph{Gauss hypergeometric function} 
(\ref{2F1define}). Although not needed later in the article, we believe that this observation is of independent interest. 

Let us take $s=1/2$ and $\gamma(x):= \Gamma(x)\Gamma(1/2-x)^{-1}$. We first recall:

\begin{thm}\label{beausym}(\cite[Theorem 10.6]{Kw23})
	Suppose  $\re s_{0}= 1/2$. Then
	\begin{align}\label{transker}
	\left(\mathcal{F}_{\alpha} H\right)(s_{0}) \, := \, 	\left(\mathcal{F}_{\alpha} H\right)(s_{0}, 1/2) 
		\ = \  &  2\pi^{-s_{0}}\,  \gamma\Big(\frac{s_{0}+\alpha_{1}}{2}\Big)  \int_{(0)} \ H(\mu) \prod_{\pm} \,  \Gamma\Big( \frac{1/2\pm\mu-\alpha_{1}}{2}\Big) 	\mathcal{K}_{\alpha}(s_{0}; \,  \mu) \ d^{W}\mu,
		\end{align}
        where 
			\begin{align}\label{ISform}
				 \mathcal{K}_{\alpha}(s_{0}; \,  \mu) \, = \, \int_{-i\infty}^{i\infty}  \  \int_{-i\infty}^{i\infty}  \,   \Gamma\Big(t+\frac{1}{2}\Big)&  \Gamma\Big(t+ \frac{1-\alpha_{1}}{2}\Big)  \Gamma\Big(\frac{\alpha_{2}}{2}+\frac{\alpha_{1}}{4}-z  \Big) \Gamma\Big(\frac{\alpha_{3}}{2}+\frac{\alpha_{1}}{4}-z  \Big)  \prod_{\pm} \,  \Gamma\Big( -\frac{1}{4}\pm \frac{\mu}{2}+ \frac{\alpha_{1}}{4}  +z-t\Big) \nonumber\\
			&  \cdot \frac{\gamma(- \frac{s_{0}}{2}-t) \gamma(z-\frac{\alpha_{1}}{4}+ \frac{s_{0}}{2}) }{ \gamma(-\frac{\alpha_{1}}{4}+z-t)} \     \frac{dz}{2\pi i} \  	\frac{dt}{2\pi i}.
		\end{align}
	The contours follow Barnes' convention and can be taken as vertical lines $\re t = a$, $\re z=b$ satisfying  
	\begin{align}\label{IScont}
		-1/2+ \epsilon_{0} \ <  \ a  \ <  \ -1/4-\epsilon_{0}, \hspace{10pt} -1/4+\epsilon_{0} \ <  \ b \ < \ -\epsilon_{0}, \hspace{10pt} \text{ and } \hspace{10pt} b-a  \ > \ 1/4+\epsilon_{0}.
		\end{align}
	\end{thm}

In \cite{Kw23}, the proof of Theorem \ref{beausym} invokes \cite[Proposition 2 \& Corollary 4]{IS07} and \cite[Lemma 1.3]{Ish19} on Mellin--Barnes integrals, both of which are elementary consequences of the first Barnes lemma:
	\begin{align}
		\int_{-i\infty}^{i\infty} \, \Gamma(w+ \alpha)\Gamma(w+\beta)  \Gamma( \gamma-w)\Gamma(\delta-w) \ \frac{dw}{2\pi i} 
		\ = \ \frac{\Gamma( \alpha+\gamma) \Gamma(\alpha+\delta) \Gamma(\beta+\gamma) \Gamma(\gamma+\delta)}{\Gamma( \alpha+\beta+\gamma+\delta)}. \nonumber
	\end{align}
    The ingredient in \cite{IS07} is a recursion expressing the Whittaker function of $\ovmrG_{3}$ in terms of that of $\ovmrG_{2}$, which is subsequently generalized by \cite{Jac09} and \cite{Hum25}.

\begin{cor}\label{cor: hypergtrans}
Suppose $\re s_{0}=1/2$. \footnote{ Upon establishing (\ref{symmetridual}), analytic continuation allows one to relax this assumption. }  
Then
\begin{align}\label{symmetridual}
	\left(\mathcal{F}_{\alpha} H\right)(s_{0}) 
		\ = \    &4 \, \Gamma_{\R}(1-s_{0})\prod_{i=1}^{3}\, \Gamma_{\R}(s_{0}+\alpha_{i})  \int_{0}^{\infty} \,  \mbF\left(\begin{matrix}
   \frac{1-s_{0}}{2},\; \frac{1-s_{0}-\alpha_{1}}{2}\\
    \frac{1}{2}
  \end{matrix}\Bigm| -x^2\right)\mbF\left(\begin{matrix}
\frac{s_{0}+\alpha_{2}}{2},\; \frac{s_{0}+\alpha_{3}}{2}\\
    \frac{1}{2}
  \end{matrix}\Bigm| -x^2\right)\nonumber\\
        & \hspace{40pt}\cdot \, \int_{(0)} \ H(\mu) \prod_{\pm}\, \prod_{\pm} \,  \Gamma_{\R}(1/2\pm\mu\pm\alpha_{1}) \mbF\left(\begin{matrix}
   \frac{1}{4} +  \frac{\alpha_{1}+\mu}{2},\; \frac{1}{4} +  \frac{\alpha_{1}-\mu}{2}\\
    \frac{1}{2}
  \end{matrix}\Bigm| -x^2\right)\,  \,	d^{W}\mu \  dx. 
		\end{align}    
\end{cor}

\begin{proof}
    By Lemma \ref{PfaBarnes} and Mellin inversion, we have
    \begin{align}
        \prod_{\pm} \,  &\Gamma\Big( -\frac{1}{4}\pm \frac{\mu}{2}+ \frac{\alpha_{1}}{4}  +z-t\Big) \gamma\Big(-\frac{\alpha_{1}}{4}+z-t\Big)^{-1} \nonumber\\
        \, &\hspace{30pt}= \,  \frac{1}{\sqrt{\pi}} \, \prod_{\pm}   \Gamma\Big( \frac{1}{4} +  \frac{\alpha_{1}\pm\mu}{2}\Big) \int_{0}^{\infty} \, \mbF\left(\begin{matrix}
   \frac{1}{4} +  \frac{\alpha_{1}+\mu}{2},\; \frac{1}{4} +  \frac{\alpha_{1}-\mu}{2}\\
    \frac{1}{2}
  \end{matrix}\Bigm| -x^{-1}\right)x^{z-t-1/2-\alpha_{1}/4} \, d^{\times} x.\nonumber
    \end{align}
    Inserting this into (\ref{ISform}) and rearranging the integrals (with $z\to -z$), 
\begin{align}
				 \mathcal{K}_{\alpha}(s_{0}; \,  \mu) \, = \, \frac{1}{\sqrt{\pi}} &\, \prod_{\pm} \,   \Gamma\Big( \frac{1}{4} +  \frac{\alpha_{1}\pm\mu}{2}\Big) \int_{0}^{\infty} \, x^{-1/2-\alpha_{1}/4} \,  \mbF\left(\begin{matrix}
   \frac{1}{4} +  \frac{\alpha_{1}+\mu}{2},\; \frac{1}{4} +  \frac{\alpha_{1}-\mu}{2}\\
    \frac{1}{2}
  \end{matrix}\Bigm| -x^{-1}\right)\nonumber\\
  &\hspace{30pt}\cdot \, \int_{-i\infty}^{i\infty}  \, \Gamma\Big(t+\frac{1}{2}\Big)  \Gamma\Big(t+ \frac{1-\alpha_{1}}{2}\Big) \gamma\Big(- \frac{s_{0}}{2}-t\Big)x^{-t} \, \frac{dt}{2\pi i}\, \nonumber\\
  & \hspace{60pt}\cdot \, \int_{-i\infty}^{i\infty}  \,     \Gamma\Big(z+\frac{\alpha_{2}}{2}+\frac{\alpha_{1}}{4}  \Big) \Gamma\Big(z+\frac{\alpha_{3}}{2}+\frac{\alpha_{1}}{4}  \Big)  \gamma\Big(-\frac{\alpha_{1}}{4}+ \frac{s_{0}}{2}-z\Big) x^{-z} \,   \frac{dz}{2\pi i} \, d^{\times} x.\nonumber
		\end{align}
 We are in a position to apply Lemma \ref{PfaBarnes} again (with $t\to t-s_{0}/2$ and $z\to z-\alpha_{1}/4+s_{0}/2$):
 \begin{align}
				 \mathcal{K}_{\alpha}(s_{0}; \,  \mu) \, = \, \frac{1}{\sqrt{\pi}} &\, \prod_{\pm} \,   \Gamma\Big( \frac{1}{4} +  \frac{\alpha_{1}\pm\mu}{2}\Big) \int_{0}^{\infty} \, x^{-\frac{1}{2}-\frac{\alpha_{1}}{4}} \,  \mbF\left(\begin{matrix}
   \frac{1}{4} +  \frac{\alpha_{1}+\mu}{2},\; \frac{1}{4} +  \frac{\alpha_{1}-\mu}{2}\\
    \frac{1}{2}
  \end{matrix}\Bigm| -x^{-1}\right)\nonumber\\
  &\hspace{30pt}\cdot \, \frac{x^{\frac{s_{0}}{2}}}{\sqrt{\pi}} \, \Gamma\Big(\frac{1-s_{0}}{2}\Big)\Gamma\Big(\frac{1-s_{0}-\alpha_{1}}{2}\Big)\mbF\left(\begin{matrix}
   \frac{1-s_{0}}{2},\; \frac{1-s_{0}-\alpha_{1}}{2}\\
    \frac{1}{2}
  \end{matrix}\Bigm| -x^{-1}\right)\nonumber\\
  & \hspace{60pt}\cdot \, \frac{x^{\frac{\alpha_{1}}{4}-\frac{s_{0}}{2}}}{\sqrt{\pi}}\, \Gamma\Big(\frac{s_{0}+\alpha_{2}}{2}\Big)\Gamma\Big(\frac{s_{0}+\alpha_{3}}{2}\Big)\mbF\left(\begin{matrix}
\frac{s_{0}+\alpha_{2}}{2},\; \frac{s_{0}+\alpha_{3}}{2}\\
    \frac{1}{2}
  \end{matrix}\Bigm| -x^{-1}\right) \, d^{\times} x.\nonumber
		\end{align}
Substituting the last expression into (\ref{transker}) and rewriting with $x\to x^{-2}$ and $\Gamma_{\R}(s):= \pi^{-s/2}\, \Gamma(s/2)$, our desired result follows.
\end{proof}

\begin{rem}\label{checktransform}
Here we check that the integral transform (\ref{symmetridual}) has the expected symmetry. Recall our main result (\ref{dual4thzeta}) with $s=1/2$. It is clear that the sum $(\mathcal{R}_{\alpha}+\mathcal{R}_{-\alpha})(1/2; H)$ is invariant under $\alpha\to -\alpha$. The same is true for the terms \footnote{In fact, they enjoy a larger set of invariance $\alpha\to (\epsilon_{1}\alpha_{1},\epsilon_{2}\alpha_{2}, \epsilon_{3}\alpha_{3})$ with $\epsilon_{i}=\pm$, and the significance will be discussed in Section \ref{CFKOver}.} $\sum_{i=1}^{3} \, \mathcal{M}_{-\alpha}^{i}(1/2; H)$ and $ \mathfrak{M}_{-\alpha}^{(3)}(1/2; H)$; for the latter, this follows from the functional equations (\ref{riemannFE}) and (\ref{Heckefe}). 

Now, applying (\ref{riemannFE}) to each of the four copies of the $\zeta$-function in the dual moment of (\ref{dual4thzeta}),  the integral transform \emph{should} satisfy the functional equation
\begin{align}\label{funceq}
        \frac{(\mathcal{F}_{\alpha} H)(s_{0})}{(\mathcal{F}_{-\alpha} H)(1-s_{0})} \, = \, \frac{ \Gamma_{\R}(1-s_{0})\prod_{i=1}^{3}\, \Gamma_{\R}(s_{0}+\alpha_{i}) }{ \Gamma_{\R}(s_{0})\prod_{i=1}^{3}\, \Gamma_{\R}(1-s_{0}-\alpha_{i}) }.
    \end{align}
We prove (\ref{funceq}) directly as follows. Recall the Euler transformation:
    \begin{align}
        \mbF\left(\begin{matrix}
   a,\; b\\
  c
  \end{matrix}\Bigm| z\right) \, = \,  (1-z)^{c-a-b}\,  \mbF\left(\begin{matrix}
   c-a,\; c-b\\
  c
  \end{matrix}\Bigm| z\right).
    \end{align}
    It follows that
\begin{align}
    \mbF\left(\begin{matrix}
   \frac{1-s_{0}}{2},\; \frac{1-s_{0}-\alpha_{1}}{2}\\
    \frac{1}{2}
  \end{matrix}\Bigm| -x\right) \, &= \, (1+x)^{s_{0}+\frac{\alpha_{1}}{2}-\frac{1}{2}} \, \mbF\left(\begin{matrix}
   \frac{s_{0}}{2},\; \frac{s_{0}+\alpha_{1}}{2}\\
    \frac{1}{2}
  \end{matrix}\Bigm| -x\right), \nonumber\\
  \mbF\left(\begin{matrix}
\frac{s_{0}+\alpha_{2}}{2},\; \frac{s_{0}+\alpha_{3}}{2}\\
    \frac{1}{2}
  \end{matrix}\Bigm| -x\right) \, &= \, (1+x)^{\frac{1}{2}-s_{0}+\frac{\alpha_{1}}{2}}  \mbF\left(\begin{matrix}
\frac{1-s_{0}-\alpha_{2}}{2},\; \frac{1-s_{0}-\alpha_{3}}{2}\\
    \frac{1}{2}
  \end{matrix}\Bigm| -x\right), \nonumber\\
 \mbF\left(\begin{matrix}
   \frac{1}{4} +  \frac{\alpha_{1}+\mu}{2},\; \frac{1}{4} +  \frac{\alpha_{1}-\mu}{2}\\
    \frac{1}{2}
  \end{matrix}\Bigm| -x\right) \, &= \, (1+x)^{-\alpha_{1}}\,  \mbF\left(\begin{matrix}
   \frac{1}{4} - \frac{\alpha_{1}}{2} -\frac{\mu}{2},\;   \frac{1}{4} - \frac{\alpha_{1}}{2} +\frac{\mu}{2}\\
    \frac{1}{2}
  \end{matrix}\Bigm| -x\right),\nonumber
  \end{align}
  where we used $\alpha_{1}+\alpha_{2}+\alpha_{3}=0$ several times. The result follows from the observation that
\begin{align}
    (1+x)^{s_{0}+\frac{\alpha_{1}}{2}-\frac{1}{2}} (1+x)^{\frac{1}{2}-s_{0}+\frac{\alpha_{1}}{2}}  (1+x)^{-\alpha_{1}} \, = \, 1. \nonumber
\end{align}
\end{rem}

The following schematic diagram, which guides the determination of the integral transform, is influenced by \cite{Ne20+, BJN25}. Let $\Pi$ be the automorphic representation of $\ovmrG_{3}$ generated by $\Phi$ and $\widetilde{\Pi}$ be its contragredient. Let $\mathcal{W}(\Pi)$ and $\mathcal{W}(\widetilde{\Pi})$ be their Whittaker models.  Let $\widehat{[\ovmrG_{2}]}$ be the set of isomorphism classes of irreducible unitary generic automorphic representations of $\ovmrG_{2}$ in  $L^{2}([\ovmrG_{2}])$. In our setting, the diagram takes the form:
\[
\begin{tikzcd}[column sep=large, row sep=large]
\mathcal{W}(\widetilde{\Pi}) \arrow[r] \arrow[d, two heads] & \mathcal{W}(\Pi) \arrow[d] \\
\big\{H(\pi): \ \widehat{[\ovmrG_{2}]} \ \rightarrow \  \C\big\} \arrow[r, "\mathcal{F}_{\alpha}", dotted]               &  \big\{h(z): \ \widehat{\R}_{\mathrm{unit}}^{\times}  \ \rightarrow \  \C\big\}
\end{tikzcd}
\]
The left-hand arrow can be realized by the Whittaker transform (\ref{eqn whitranseq}) and the surjectivity of Kirillov model with respect to the embedding $g\mapsto \left(\begin{smallmatrix}
    g & \\
      & 1
\end{smallmatrix}\right)$. In this work, we adopt the more explicit, classical approach via a Poincar\'{e} series; see Section \ref{Poincspecexp}. The top arrow refers to the map $ \widetilde{W} \mapsto \rho(w_{\ell})W$, where $\widetilde{W}(g):= W(w_{\ell}\, ^{t}g^{-1})$. The right-hand arrow is described by a double Mellin transform, with respect to the embedding\,  $(x,y)\in (\R_{>0}^{\times})^2 \mapsto \left(\begin{smallmatrix}
		x & & \\
		y & 1 & \\
		&  & 1
	\end{smallmatrix}\right)$, for functions in $\mathcal{W}(\Pi)$.

 Also, we wish to point out several other forms of the integral transforms obtained by Humphries--Khan  \cite[eq. (3.5)]{HK22+} and  Bir\'{o}  \cite[p. 5]{Bi22+}. Readers should consult their works for the formulae and the relevant regularity assumptions.


\section{Structures of the main terms}\label{CFKRSSOU}

Readers may proceed directly to Section \ref{proofEismaTHm} for the proofs of the main results. This section, however, situates those results in a broader context, explains their motivation, and indicates why methods of period integral offer useful alternative approaches to subjects surrounding the Moment Conjectures.

The authors of  \cite{CFK+05} proposed an elegant heuristic for obtaining the full sets of main terms for very general classes of moments of $L$-functions. It avoids repeated Taylor/Laurent expansions and proliferation of transcendental constants. More importantly, it encodes the underlying combinatorics and symmetries, and shows remarkable agreement with random matrix theory, specifically, with moments of characteristic polynomials. 

We now summarize the conjectures relevant to our work. In Sections \ref{CFKRS} and \ref{CFKRSU}, we show that they follow from a short, elementary manipulation.


\subsection{CFKRS for orthogonal symmetry}\label{CFKOver}

 For the shifted cubic moment $ \mathfrak{M}_{-\alpha}^{(3)}(s;  H)$, 
 the CFKRS conjecture predicts $2^3=8$ main terms in total, which can be classified as follows \cite{CF22+}. Firstly, the diagonal term, or the  ``$0$-swap'' term,  is of the form: 
\begin{align}
	\zeta(1-\alpha_{1}-\alpha_{2})\zeta(1-\alpha_{1}-\alpha_{3})\zeta(1-\alpha_{2}-\alpha_{3}) \times \text{(archimedean part)}.
\end{align}

Next, we alter the sign of exactly one of the components of $(-\alpha_{1}, -\alpha_{2}, -\alpha_{3})$ and we obtain the following three triples: $(\alpha_{1}, -\alpha_{2}, -\alpha_{3})$, $(-\alpha_{1}, \alpha_{2}, -\alpha_{3})$, $(-\alpha_{1}, -\alpha_{2}, \alpha_{3})$. Accordingly, the following three main terms can be written down, up to the associated archimedean factors:
\begin{align*}
	& \zeta(1+\alpha_{1}-\alpha_{2})\zeta(1+\alpha_{1}-\alpha_{3})\zeta(1-\alpha_{2}-\alpha_{3}),\\ 
    &\zeta(1-\alpha_{1}+\alpha_{2})\zeta(1-\alpha_{1}-\alpha_{3})\zeta(1+\alpha_{2}-\alpha_{3}) ,\\  
    &\zeta(1-\alpha_{1}-\alpha_{2})\zeta(1-\alpha_{1}+\alpha_{3})\zeta(1-\alpha_{2}+\alpha_{3}).
\end{align*}
These three terms are known as the ``$1$-swap'' terms.  The three ``$2$-swap'' terms and the one ``$3$-swap'' term can be written down similarly.

Proposition \ref{incomf} pinpoints the sources of the main terms in a simple and unified way:

\begin{prop}\label{MTclass}
	The main terms of $ \mathfrak{M}_{-\alpha}^{(3)}(s;  H)$ are classified by the rightmost expression in  (\ref{keylemform}): the sums over
	\begin{enumerate}
		\item   $a_{0}=0$ and $a_{1}\neq 0$ give the $0$-swap term;
		
		\item   $a_{0}\neq 0$  and $a_{1}=0$  give the three $1$-swap terms;
		
		\item  $a_{0}\neq 0$  and $a_{1}\neq 0$ contain the three $2$-swap terms  and  the $3$-swap term. 
	\end{enumerate}
\end{prop}

Observe that the  ``$4=3+1$'' decomposition adopted in this work fits more naturally with the structures of the main terms for $ \mathfrak{M}_{-\alpha}^{(3)}(s;  H)$ than the conventional  ``$4=2+2$'' counterpart (see \cite{Iv02, Fr20, Mo93, Mo97}, for instance).  

A salient feature in many explicit archimedean computations is the \emph{complete reduction} of the Mellin--Barnes integrals to ratios of products of $\Gamma$-factors, provided the arguments lie in certain ``nice'' configurations.  This phenomenon typically reflects deeper structures of the integrals. In our case, the reductions of the integral transforms $(\mathcal{F}_{\alpha}H)(s_{0}, s)$ and  $J_{w, \alpha}(s; h)$ correspond exactly to the CFKRS conjecture for  $ \mathfrak{M}_{-\alpha}^{(3)}(s;  H)$. This is the content of Theorem \ref{CFKGamma}.

\begin{rem}
Another example of complete reduction of Mellin--Barnes integrals occurs in \cite[Theorem 3.3]{St01}. It corresponds to the exterior-square lifting for $\mathrm{GL}(n)$.
\end{rem}


\subsection{CFKRS for unitary symmetry}\label{CFKUsym}
The CFKRS conjecture for the fourth moment of the Riemann $\zeta$-function is stated in terms of the following Dirichlet series:
\begin{align}\label{diagRSnaive}
	\mathcal{Z}_{A; B}(s) \ := \   	\sum_{\substack{m,n \ge 1 \\m=n }} \ \frac{\tau_{A}(m)\tau_{B}(n)}{(mn)^{\frac{1}{2}+s}},
\end{align}
where  $\tau_{A}(m):= \sum_{d_{1} d_{2}=m} \ d_{1}^{-\alpha_{1}} d_{2}^{-\alpha_{2}}$ if $A= \{ \alpha_{1}, \alpha_{2} \}$, and similarly for $\tau_{B}(n)$ if $B= \{\beta_{1}, \, \beta_{2}\}$.  The series (\ref{diagRSnaive}) comes up naturally as the \emph{recipe} of \cite[Sections 1.7 and 2]{CFK+05} captures only the diagonal. For $\sigma >0$, we have
\begin{align}\label{naiveRS}
	\mathcal{Z}_{A;B}(s) \ \ :=  \ \ \prod_{\alpha \in A} \ \prod_{\beta \in B} \ \zeta(1+2s+\alpha+\beta)\cdot  \widetilde{\mathcal{A}}_{AB}(s),
\end{align}
and  the infinite Euler product   $ \widetilde{\mathcal{A}}_{AB}(s)$ converges absolutely on $\sigma > -\delta$ for some small $\delta>0$ (see \cite[Theorem 2.4.2.1]{CFK+05}). The full set of the main terms for the shifted moment
	\begin{align}
	 \int_{\mathbb{R}} \ \eta(t)\prod_{\alpha\in A} \ \zeta\left(\tfrac12 +it + \alpha\right)  \ \prod_{\beta\in B}  \zeta\left(\tfrac12 -it+ \beta\right)
		\ dt  
	\end{align}
	is conjectured to be 
	\begin{align}\label{recipe6mo}
	   \int_{\mathbb{R}} \,   \eta(t)  \,  \Big\{  \sum_{\ell =0}^{2} \ \sum_{\substack{U\subset A, \ V\subset B\\ |U|=|V|=\ell}} \   \big(\frac{t}{2\pi}\big)^{-U-V}  \mathcal{Z}_{A_{U}; \,B_{V} }(0) \Big\} \ dt,
	\end{align}
	where $A_{U}:= A-U+V^{-}$, $B_{V}:= B-V+U^{-}$, and \ $-U-V:= - \sum_{\alpha\in U} \alpha - \sum_{\beta\in V} \beta$.  The expression inside the braces in (\ref{recipe6mo}) can be written out more explicitly as follows:
\begin{align}\label{expl4thMT}
Z_{\{\alpha_{1}, \alpha_{2} \}; \left\{ \beta_{1}, \beta_{2}\right\}}(0)  \ &+\  Z_{\{-\beta_{1}, \alpha_{2} \}; \left\{ -\alpha_{1}, \beta_{2}\right\}}(0) \left(\frac{t}{2\pi}\right)^{-\alpha_{1}-\beta_{1}} \ +\ Z_{\{-\beta_{2}, \alpha_{2} \}; \left\{ \beta_{1}, -\alpha_{1}\right\}}(0) \left(\frac{t}{2\pi}\right)^{-\alpha_{1}-\beta_{2}}\nonumber\\
  \ &+\ Z_{\{\alpha_{1}, -\beta_{1} \}; \left\{ -\alpha_{2}, \beta_{2}\right\}}(0) \left(\frac{t}{2\pi}\right)^{-\alpha_{2}-\beta_{1}} \ +\ Z_{\{\alpha_{1}, -\beta_{2} \}; \left\{ \beta_{1}, -\alpha_{2}\right\}}(0) \left(\frac{t}{2\pi}\right)^{-\alpha_{2}-\beta_{2}} \nonumber\\
  \ &+\ Z_{\{-\beta_{1}, -\beta_{2} \}; \left\{ -\alpha_{1},  -\alpha_{2}\right\}}(0) \left(\frac{t}{2\pi}\right)^{-\alpha_{1}-\alpha_{2}-\beta_{1}-\beta_{2}}.  
\end{align}
The first and the last term of (\ref{expl4thMT}) are known as the ``$0$-swap'' and ``$2$-swap'' terms respectively, whereas the middle four terms of (\ref{expl4thMT}) are the ``$1$-swap'' terms. These six terms are the ``non-oscillatory'' terms that remain in the recipe, out of the total $2^4=16$ terms arising from all possible sign combinations of the shifts. Furthermore,  the Ramanujan identity provides a nice formula when $|A|=|B|=2$:
\begin{align}
    \widetilde{\mathcal{A}}_{AB}(s) \,  = \,      \zeta\big( 2+ 2s+ \sum_{\alpha\in A} \ \alpha + \sum_{\beta\in B} \ \beta\big)^{-1}.
\end{align}

\begin{rem}
The formulation (\ref{recipe6mo}) generalizes to any finite sets  $A$ and $B$ of shifts with $|A|=|B|$, where $\tau_{A}$, $\tau_{B}$ are defined similarly, and the upper limit  ``$2$'' of the $\ell$-sum is replaced by $|A|$ $(=|B|)$. However, the corresponding Euler product  $ \widetilde{\mathcal{A}}_{AB}(s)$  does not admit a simple closed form in general (\cite{CIS12}).  
\end{rem}



\section{Proof of Theorems  \ref{maingl3gl2Eiscase} and \ref{CFKGamma}}\label{proofEismaTHm}


\subsection{Spectral expansion and Eisenstein contribution}\label{Poincspecexp}
	\begin{defi}\label{poindef}
		Let $h\in C^{\infty}(0,\infty)$ and $\psi(x):= e(x)$, which are regarded as functions on $\ovmrG_{2}$ via
        \begin{align*}
            h(g) \, = \, h(y) \hspace{15pt} \text{and} \hspace{15pt}  \psi(g) \, = \, \psi(x) \hspace{15pt} \text{if } \hspace{15pt}  g \, = \, \begin{psmallmatrix}
                1 & x\\
                  & 1
            \end{psmallmatrix}\begin{psmallmatrix}
                y & \\
                  & 1
            \end{psmallmatrix} k. 
        \end{align*}
        The associated Poincar\'e series of $\Gamma_{2}$   is defined by
		\begin{equation}\label{defpoin}
		P(g;\, h) \ := \   \sum_{\gamma\in [\Gamma_{2}]} (h\psi)(\gamma g) \hspace{20pt} (g \, \in \,  \mrG_{2}),
		\end{equation}
	 provided it converges absolutely. 
	\end{defi}
	
If the bound $h(y) \,\ll \,  y^{1+\epsilon}(1+y)^{-1/2-2\epsilon}$ is satisfied for any $y>0$, then the Poincar\'e series $P(g;\, h)$ converges absolutely and uniformly on every Siegel set, and is an $L^2$-function. Recall that we take $h:= H^{\flat}$ with $H\in \mathcal{C}_{\eta}$ and $\eta >40$ in this article; cf. (\ref{invers}) and Assumption \ref{regassuCeta}. It follows from (\ref{decbdd}) that the desired bound for $h$ holds.


	\begin{lem}\label{comspec}
Let $ \mathfrak{M}_{-\alpha}^{(3)}(s;  H)$ be defined in (\ref{def: speccubicmom}).	For $\sigma > 3/2$ and $H\in \mathcal{C}_{\eta}$, we have
\begin{align}\label{shiftcub}
 2 \cdot  \mathcal{I}\big(s; \, P(\,*\, ; h), \, \widetilde{\Phi}^{\mathrm{ND}}\big) \ = \  \mathfrak{M}_{-\alpha}^{(3)}(s;  H).
\end{align}
	\end{lem}

	\begin{proof}
This follows from a modification of \cite[Proposition 5.25]{Kw23} and Corollary \ref{ranseleis}.
		\end{proof}

The cuspidal part of $ \mathfrak{M}_{-\alpha}^{(3)}(s;  H)$ evidently admits an entire continuation.  The Eisenstein part, initially defined on $\sigma> 1+\epsilon_{0}/2$, also admits a continuation, though this is less immediate.  Define
\begin{align}\label{contana}
	\mathcal{C}_{\alpha}(s; \, \kappa; \, H) \ \ := \ \  \int_{(\kappa)} \   H(\mu)  \, 
	\frac{  \prod_{i=1}^{3} \, \prod_{\pm} \ \Lambda( s \pm  \mu -\alpha_{i} )}{\Lambda(1+2\mu) \Lambda(1-2\mu)} \,  \frac{d\mu}{2\pi i },
\end{align}
and $ \mathcal{R}_{\alpha}(s; H)$ as in (\ref{4thmoMT2}). Since $H\in \mathcal{C}_{\eta}$,  the integrand of  (\ref{contana})  is holomorphic on $|\re \mu|<2\eta$, except for the poles at
\begin{align}\label{poles}
	\mu \ \in  \ \big\{-s +   \alpha_{i},  \ 1  -  s  +   \alpha_{i}\big\}_{i=1}^{3}  \hspace{20pt} \text{ and }  \hspace{20pt}  \mu \ \in \  \big\{ s -    \alpha_{i}, \ s-1 -\alpha_{i}\big\}_{i=1}^{3},
\end{align}
as well as the zeros of $\Lambda(1\pm 2\mu)$.

\begin{lem}\label{contrcon}
	The function $s\mapsto\mathcal{C}_{\alpha}(s; 0; H)$ admits a meromorphic continuation to $\sigma> 1-\eta+\epsilon_{0}$. On the vertical strip $\epsilon_{0} < \sigma<1-\epsilon_{0}$, the continuation is given by $	\mathcal{R}_{\alpha}(1-s; H)+ \mathcal{C}_{\alpha}(s; 0; H)$.
\end{lem}

\begin{proof}
	This is an adaptation of \cite[p. 118]{Mo97} and we sketch the argument for our setting. Firstly,  suppose $1+\epsilon_{0}/2< \sigma< 1+\epsilon_{0}$.  Since $H\in \mathcal{C}_{\eta}$, we may shift the line of integration to $\re \mu =\eta> 40$, picking up the second set of poles in (\ref{poles}) together with the zeros of $\Lambda(1-2\mu)$.  We have
  \begin{align}\label{shifrigh}
  		\mathcal{C}_{\alpha}(s; \, 0; \, H)  \, = \,  -   \sum_{i=1}^{3} \,  (\Res_{\mu=s-1-\alpha_{i}}  \, + \,   \Res_{\mu=s-\alpha_{i}})  \, - \,  \sum_{\rho} \ \Res_{\mu= \frac{1-\rho}{2}}    \ \   +  \ \     	\mathcal{C}_{\alpha}(s; \, \eta; \, H),
  \end{align}
where $\rho$ runs over  all nontrivial zeros of $\zeta(s)$. 

Secondly, observe that $\mathcal{C}_{\alpha}(s; \, \eta; \, H)$ is holomorphic on $1-\eta + \epsilon_{0}  	<   \sigma   <   1+ \epsilon_{0}$, using the holomorphy of $\Lambda(s)$ on $\{\sigma<0\} \cup \{\sigma>1\}$, and the fact that $\Lambda(1\pm 2\mu)\neq 0$ on $\re \mu=\eta$.  Together with the residual terms of (\ref{shifrigh}),  a meromorphic continuation of $(\mathcal{C}_{\alpha}H)(s)$ to $\sigma> 1-\eta+\epsilon_{0}$ is obtained. 

Thirdly, we restrict the continuation obtained to the smaller region  $\epsilon_{0} < \sigma < 1-\epsilon_{0}$. Only the poles $\mu=1-s+\alpha_{i}$, $\mu= s-\alpha_{i}$ and the zeros of $\Lambda(1-2\mu)$ lie between the contour $\re \mu=0$ and $\re \mu= \eta$.  We shift the line of integration of $\mathcal{C}_{\alpha}(s; \, \eta; \, H)$ back to $\re \mu=0$. The residual contributions of $\mu=s-\alpha_{i}$ and $\mu=(1-\rho)/2$ in (\ref{shifrigh}) are now cancelled, leaving
\begin{align}
	 -   \sum_{i=1}^{3} \, \Res_{\mu=s-1-\alpha_{i}}  \, +  \,  \sum_{i=1}^{3} \, \Res_{\mu=1-s+\alpha_{i}} \, +  \,    \mathcal{C}_{\alpha}(s; \, 0; \, H)
\end{align}
as the meromorphic continuation of $\mathcal{C}_{\alpha}(s; 0; H)$   to the domain  $\epsilon_{0} < \sigma < 1-\epsilon_{0}$. A little calculation shows that $\mathcal{R}_{\alpha}(1-s; H) =   -   \sum_{i=1}^{3} \,  \Res_{\mu=s-1-\alpha_{i}}  \, + \,    \sum_{i=1}^{3} \,  \Res_{\mu=1-s+\alpha_{i}}$, and this completes the sketch. 
\end{proof}

The following is immediate from Lemma \ref{comspec} and \ref{contrcon}.
\begin{cor}\label{prop: spectralcont}
    The expression $\mathfrak{M}_{-\alpha}^{(3)}(s;  H)$ admits a meromorphic continuation to $\sigma> 1-\eta+\epsilon_{0}$. On the strip $\epsilon_{0} < \sigma<1-\epsilon_{0}$, the continuation is given by $\mathfrak{M}_{-\alpha}^{(3)}(s;  H) + \mathcal{R}_{\alpha}(1-s; H)$.
\end{cor}


\subsection{Unfolding and rearrangement}\label{unfoldEis}

By unfolding, we have the equality
\begin{align}\label{modperiodcons}
	\mathcal{I}\big(s; \, P(\,*\, ; h), \, \widetilde{\Phi}^{\mathrm{ND}}\big) \ = \  \int_{0}^{\infty}  \int_{0}^{\infty} \,  h(y_{1}) (y_{0}^2y_{1})^{s-\frac{1}{2}}  \mathcal{P}_{\psi^{-1}}\big(\mby; \, \widetilde{\Phi}^{\mathrm{ND}}\big) \ d^{*}\mby,
\end{align}
where the right-hand side converges absolutely on $1+\epsilon_{0}<\sigma< 4$; see \cite[Section 6B]{Kw23}. In what follows, we perform our analysis on suitable vertical strips within the half-plane $\sigma<4$.   

To avoid confusion with computations in the literature (e.g., regarding the sign convention of parameters and normalization of Fourier coefficients), it is better to consider instead
\begin{align}\label{norm: periodwithoutdual}
	\mathcal{J}(s; \, h, \, \Phi) \ := \  \int_{0}^{\infty}  \int_{0}^{\infty} \  h(y_{1}) (y_{0}^2y_{1})^{s-\frac{1}{2}}  \mathcal{P}_{\psi}(\mby; \, \Phi^{\mathrm{ND}}) \ d^{*}\mby 
\end{align}
in the main calculations of Sections \ref{1-swap}--\ref{degt}. As is apparent from \cite[(6.5), (6.7), (6.8)]{Kw23}, we have
\begin{align}\label{warn}
    \mathcal{I}\big(s; \, P(\,*\, ; h), \, \widetilde{\Phi}^{\mathrm{ND}}\big) \, = \, 	\mathcal{J}(s; \, h, \,  \widetilde{\Phi}).
\end{align}

The next lemma explains why our method does not require any elaborate regularization of periods.
\begin{lem}\label{lem: whywecanreg}
For any $\mby\in \mathrm{Y}^{+}$, we have
    \begin{align}
     \mathcal{W}_{(0,1)}(\mby; \, \Phi)\ - \   \sum_{n_{2}\neq 0}  \ \mathcal{P}_{\psi}\big(\mby; \, \mathcal{W}_{(0, n_{2})}(\,*\,; \, \Phi) \big) \, = \, 0.\label{(1,0)}
    \end{align}
\end{lem}

\begin{proof}
	Since $\mathcal{W}_{(0, n_{2})}(\mbn 
		\mby; \, \Phi)  =    \psi_{n_{2}}(\mbn) \mathcal{W}_{(0, n_{2})}(\mby;\, \Phi)$ for $\mbn\in \mrN_{12}$ and $\mby\in \mathrm{Y}^{+}$, the result follows from 
	\begin{align*}
		\sum_{n_{2}\neq 0}  \, \mathcal{P}_{\psi}\big(\mby; \,\mathcal{W}_{(0, n_{2})}(*; \, \Phi) \big)
           \ = \   \sum_{n_{2}\neq 0}  \,  \mathcal{W}_{(0, n_{2})}(\mby;\, \Phi) \,  \int_{0}^{1} e\left( (n_{2}-1) u\right) \, du \ = \   \mathcal{W}_{(0, 1)}(\mby;\, \Phi).  	\end{align*}
\end{proof}

We have the following decompositions for our periods:
\begin{prop}\label{prop: decompo}
For any $\mby\in \mathrm{Y}^{+}$, we have 
    \begin{align}
	\mathcal{P}_{\psi}\big(\mby; \, \Phi^{\mathrm{ND}}\big) \ = \  \mathcal{P}^{\mathrm{reg}}\big(\mby; \, \Phi\big) \ + \   \mathcal{P}_{(0,1)}^{\min}(\mby; \, \Phi) \ - \  \mathcal{P}_{(*,0)}^{\min}(\mby; \, \Phi),
\end{align}
and 
\begin{align}
    \mathcal{J}(s; \, h, \, \Phi) \, = \, \mathcal{J}^{\mathrm{reg}}(s; \, h, \, \Phi) \, + \, \mathcal{J}_{(0,1)}^{\min}(s; \, h, \, \Phi)  \, - \, \mathcal{J}_{(*,0)}^{\min}(s; \, h, \, \Phi), 
\end{align}
where
\begin{align}
  \mathcal{P}^{\mathrm{reg}}\big(\mby; \, \Phi\big) \, &:= \,   \sum_{a_{1}\in \Z-\{0\}} \, \sum_{a_{0}\in \Z}  \  \mathcal{W}_{(a_{1},1)}\big(
	\mbn_{32}(-a_{0}) \mby; \, \Phi\big), 
  \label{usual} \\\
   \mathcal{P}_{(0,1)}^{\min}(\mby; \, \Phi)   \ &:= \ 	\sum_{a_{0}\in \Z-\{0\}}  \  \mathcal{W}_{(0,1)}\left(
		\mbn_{32}(-a_{0}) \mby; \, \Phi\right), \label{eq: mainconstr1swap}\\
        \mathcal{P}_{(*,0)}^{\min}(\mby; \, \Phi) \, &:= \, \sum_{\gamma\in [\Gamma_{2}]} 
	\, \sum_{n_{1}= 1}^{\infty}  \,  \mathcal{P}_{\psi}\big(\mby; \, \gamma\cdot \mathcal{W}_{(n_{1},0)}(*; \, \Phi) \big), \label{(0,1)}   
\end{align}
and $(\gamma\cdot f)(g):= f(\gamma g)$, and for $\dagger\in \{\mathrm{reg}, \, \min\}$\, and\, $\bullet \in \{ \quad  ,\, (0,1),\, (*,0)\}$, we set
\begin{align}
        \mathcal{J}^{\dagger}_{\bullet}(s; \, h, \, \Phi) \, := \, \int_{0}^{\infty}  \int_{0}^{\infty} \, h(y_{1}) (y_{0}^2y_{1})^{s-\frac{1}{2}}  \mathcal{P}^{\dagger}_{\bullet}\big(\mby; \, \Phi\big) \ d^{*}\mby.
\end{align}
\end{prop}

\begin{proof}
    The unipotent period $\mathcal{P}_{\psi}$ clearly annihilates the constant term $\mathcal{W}_{(0,0)}(g; \, \Phi)$. The result  follows from Lemma \ref{genFourlem}, Proposition \ref{incomf}, and Lemma \ref{lem: whywecanreg}.
\end{proof}



\subsection{Degenerate terms---I}\label{1-swap}

In this section, we evaluate $\mathcal{J}_{(0,1)}^{\min}(s; \, h, \, \Phi) $ defined in Proposition \ref{prop: decompo}. The computation produces three terms, which will be identified with terms in the CFKRS conjecture for the cubic moment in Section \ref{CFKRS}. 
  
\begin{prop}\label{1swalem}
Suppose $1/2+\epsilon_{0}< \sigma < 4$. Then  $ \mathcal{J}_{(0,1)}^{\min}(s;  h, \Phi) $ is equal to
	\begin{align}\label{1swapintr}
	2  \cdot \Gamma_{\R}\left(1+\alpha_{2}^{w}-\alpha_{3}^{w}\right)\Gamma_{\R}\left(1+\alpha_{1}^{w}-\alpha_{3}^{w}\right)\sum_{w=w_{2}, w_{4}, w_{\ell}} \ \zeta(w, \alpha, \psi_{(0,1)})   \zeta(2s+\alpha_{1}^{w}+\alpha_{2}^{w}) J_{w, \alpha}(s; h),
	\end{align}
	where 
	\begin{align}\label{1swpint}
		J_{w, \alpha}(s; h) \ := \ 	 \int_{0}^{\infty}  \int_{0}^{\infty} \  h(y_{1})  y_{1}^{s-\frac{\alpha_{3}^{w}}{2}} y_{0}^{2s-\alpha_{3}^{w}} (1+y_{0}^2)^{-\frac{3}{4}(1-\alpha_{3}^{w})} W_{(\alpha_{1}^{w}-\alpha_{2}^{w})/2}\Big(y_{1}\sqrt{1+y_{0}^2}\Big) \, d^{*}\mby.
	\end{align}
\end{prop}

\begin{proof}
	We begin with the matrix identity 
		\begin{align}\label{centralmaid}
		\mbn_{32}(-a_{0}) \mby	\ \equiv \ 
		\mbn_{23}\Big(-\frac{a_{0}y_{0}^2}{1+(a_{0}y_{0})^2}\Big) \mby\Big(\frac{y_{0}}{1+(a_{0}y_{0})^2}, \, y_{1} \sqrt{1+(a_{0}y_{0})^2}\Big) \hspace{15pt} (\bmod\, \mathrm{K}_{3}\cdot \R_{>0}^{\times}).
	\end{align}
	Applying this with (\ref{degWhitBes}), (\ref{butForum}) and (\ref{eq: mainconstr1swap}), we have
	\begin{align}
		 \mathcal{J}_{(0,1)}^{\min}(s; \, h, \, \Phi)  \, = \,   \Gamma_{\R}\left(1+\alpha_{2}^{w}-\alpha_{3}^{w}\right) \  &\Gamma_{\R}\left(1+\alpha_{1}^{w}-\alpha_{3}^{w}\right) \ \  \sum_{w=w_{2}, w_{4}, w_{\ell}} \ \zeta(w, \alpha, \psi_{(0,1)})   \nonumber\\
		& \cdot   \ 	\sum_{a_{0}\neq 0}  \ \int_{0}^{\infty}  \int_{0}^{\infty} \  h(y_{1})(y_{0}^2y_{1})^{s-\frac{1}{2}}   \Big(\frac{y_{0}}{1+(a_{0}y_{0})^2}\Big)^{1-\alpha_{3}^{w}}  \Big(y_{1}\sqrt{1+(a_{0}y_{0})^2}\Big)^{\frac{1-\alpha_{3}^{w}}{2}} \nonumber\\
		&\hspace{80pt} \cdot  \, W_{(\alpha_{1}^{w}-\alpha_{2}^{w})/2}\Big(y_{1}\sqrt{1+(a_{0}y_{0})^2}\Big) \,  d^{*}\mby.
	\end{align}
	Making a change of variables $y_{0}\to |a_{0}|^{-1} y_{0}$, we find that $ \mathcal{J}_{(0,1)}^{\min}(s; \, h, \, \Phi) $ is equal to
	\begin{align*}
2\Gamma_{\R}\left(1+\alpha_{2}^{w}-\alpha_{3}^{w}\right)\Gamma_{\R}\left(1+\alpha_{1}^{w}-\alpha_{3}^{w}\right)  \sum_{w=w_{2}, w_{4}, w_{\ell}} \ \zeta(w, \alpha, \psi_{(0,1)})   \zeta(2s+\alpha_{1}^{w}+\alpha_{2}^{w}) J_{w, \alpha}(s; h)
	\end{align*}
for $  1/2+\epsilon_{0}<\sigma< 4$. This completes the proof.
\end{proof}

\begin{cor}\label{cor: contibreak(0,1)min}
    The function $ \mathcal{J}_{(0,1)}^{\min}(s; \, h, \, \Phi) $ admits a holomorphic continuation to the strip $\epsilon_{0}< \sigma< 4$, except for three simple poles at $s= (1+\alpha_{i})/2$, $i=1,2,3$. 
\end{cor}

\begin{proof}
Recall that $H:= h^{\#} \in \mathcal{C}_{\eta}$ and $\eta> 40$. 
Using the bound 
\begin{align*}
    W_{(\alpha_{1}^{w}-\alpha_{2}^{w})/2}(y)  \, \ll_{A} \, y^{-A}
\end{align*}
with  $A> 2\sigma-3/2+\epsilon_{0}$ and  $\eta-1 -\epsilon_{0}>  |A-\sigma| $, observe that $J_{w, \alpha}(s; h)$ converges absolutely whenever $\epsilon_{0}< \sigma < 4$. With the analytic continuation of the $\zeta$-function, the right-hand side of  (\ref{1swapintr}) serves as a continuation for $ \mathcal{J}_{(0,1)}^{\min}(s; \, h, \, \Phi) $ to $\epsilon_{0}< \sigma < 4$. It is clearly holomorphic on $\epsilon_{0}< \sigma < 4$, except for the simple poles coming from the factor $\zeta(2s+\alpha_{1}^{w}+\alpha_{2}^{w})$ ($w\in \{w_{2}, w_{4}, w_{\ell}\}$). This completes the proof.
\end{proof}

\begin{prop}\label{1swaprop}
	On the region $\epsilon_{0} < \sigma< 1-\epsilon_{0}$, we  have
	\begin{align}\label{1swaparch}
		J_{w, \alpha}(s; h)
		\ = \   \frac{1}{4} \ & \Gamma_{\R}(1+\alpha_{2}^{w}-\alpha_{3}^{w})^{-1}\Gamma_{\R}(1+\alpha_{1}^{w}-\alpha_{3}^{w})^{-1} \nonumber\\
		&\hspace{30pt}\  \cdot \ \int_{(0)} \, H(\mu)\prod_{\pm} \, \Gamma_{\R}(1-s\pm \mu-\alpha_{3}^{w}) \prod_{i=1}^{2}\, \Gamma_{\R}(s\pm \mu+\alpha_{i}^{w}) \, d^{W}\mu.
	\end{align}
\end{prop}

\begin{proof}
	We begin with the following initial domain:
	\begin{align}\label{inirestr}
	\hspace{20pt} 	1/2+\epsilon_{0} \ < \ 	\sigma \ < \ 4,  \  \hspace{15pt}  	\sigma_{u} \ > \ 2\sigma-3/2+ \epsilon_{0},  \hspace{15pt} 40-\epsilon_{0} \ > \  |\sigma_{u}-\sigma+1|.
	\end{align}
    It follows that
	\begin{align}
	J_{w, \alpha}(s; h) \, = \, 	\frac{1}{2} \, \int_{(\sigma_{u})} \ \prod_{\pm} \, \Gamma_{\R}\Big(u+ \frac{1\pm (\alpha_{1}^{w}-\alpha_{2}^{w})}{2}\Big)& \Big( \int_{0}^{\infty} \, h(y_{1}) y_{1}^{s-1-\frac{\alpha_{3}^{w}}{2}-u} \, d^{\times} y_{1}\Big) \nonumber\\
        &\hspace{-25pt}\cdot \, \Big( \int_{0}^{\infty} \, y_{0}^{2s-\alpha_{3}^{w}} (1+y_{0}^2)^{-\frac{3}{4}(1-\alpha_{3}^{w})-\frac{u}{2}} \, d^{\times} y_{0}\Big)  \ \  \frac{du}{2\pi i}.\nonumber
	\end{align}
Indeed, the first two conditions of (\ref{inirestr}) ensure that the $y_{0}$-integral converges absolutely, and the Euler beta integral formula and (\ref{melwhitgl2}) can be applied (with Mellin inversion). The last condition of (\ref{inirestr}) guarantees the absolute convergence of the $y_{1}$-integral. As a result, we have
	\begin{align}\label{almoBarEis}
		J_{w, \alpha}(s; h)\ = \  \frac{1}{4} \ \Gamma\Big(s-\frac{\alpha_{3}^{w}}{2}\Big)  \, \int_{(\sigma_{u})} \  \, \frac{\prod_{\pm} \, \Gamma_{\R}(u+ \frac{1\pm (\alpha_{1}^{w}-\alpha_{2}^{w})}{2})\Gamma(\frac{3-\alpha_{3}^{w}}{4}-s+ \frac{u}{2})}{\Gamma( \frac{3}{4}(1-\alpha_{3}^{w})+\frac{u}{2})}
		\,	\widetilde{h}\Big(s-1-\frac{\alpha_{3}^{w}}{2}-u\Big)  \ \frac{du}{2\pi i}.
	\end{align}

	
	Upon restricting to the strip  $1/2+\epsilon_{0} < \sigma< 1-2\epsilon_{0}$,  we pick $\sigma_{u} 
    \in (2\sigma-3/2 + \epsilon_{0}, \,  \sigma- 1/2-\epsilon_{0})$. The contour  $\re u=\sigma_{u}$ satisfies the Barnes convention, and the condition for (\ref{mellin}) is satisfied. We have
	\begin{align}
	 	\widetilde{h}\Big(s-1-\frac{\alpha_{3}^{w}}{2}-u\Big) \ = \  \frac{1}{2}\,  \int_{(0)}  \ H(\mu)\, \prod_{\pm} \, \Gamma_{\R}\Big(s-\frac{1+\alpha_{3}^{w}}{2}-u\pm\mu\Big)  \ d^{W}\mu, \nonumber
	\end{align} 
	 and hence, $J_{w, \alpha}(s; h) $ is equal to
	\begin{align}
	\frac{\pi^{-(s-\frac{\alpha_{3}^{w}}{2})}}{8} \  \Gamma\Big(s-\frac{\alpha_{3}^{w}}{2}\Big)   \int_{(0)} \, H(\mu) \int_{(\sigma_{u})} \  \frac{\Gamma\Big(\frac{3-\alpha_{3}^{w}}{4}-s+ \frac{u}{2}\Big)\prod_{\pm} \, \Gamma\Big(\frac{u}{2}+ \frac{1\pm (\alpha_{1}^{w}-\alpha_{2}^{w})}{4}\Big)\Gamma\Big(\frac{s-\frac{1+\alpha_{3}^{w}}{2}-u\pm \mu}{2}\Big)  }{\Gamma( \frac{3}{4}(1-\alpha_{3}^{w})+\frac{u}{2})}  \ \frac{du}{2\pi i} \, d^{W}\mu.\nonumber
	\end{align}
    
We make a change of variables $u \to 2u$, and take 
	\begin{align}
		(a,b,c; d,e ) \ := \  \Big( \frac{1+(\alpha_{1}^{w}-\alpha_{2}^{w})}{4}, \  \frac{1-(\alpha_{1}^{w}-\alpha_{2}^{w})}{4}, \  \frac{3-\alpha_{3}^{w}}{4}-s; \ \frac{s+\mu}{2}-\frac{1+\alpha_{3}^{w}}{4}, \ \frac{s-\mu}{2}-\frac{1+\alpha_{3}^{w}}{4} \Big) \nonumber
	\end{align}
    and verify that  
	\begin{align*}
		(a+ b)+c + (d +e) \ \  = \  \   \frac{1}{2} \ + \  \Big(\frac{3-\alpha_{3}^{w}}{4} - s \Big) \ + \ \Big(s -\frac{1+\alpha_{3}^{w}}{2}\Big) \ \ = \ \   \frac{3}{4}\ (1-\alpha_{3}^{w}) \ \  (:=) \ \  	f.
	\end{align*}
    We apply Lemma \ref{secBarn} to the $u$-integral. Observe that a pair of factors $\Gamma(s-\alpha_{3}^{w}/2)$ cancels, and
    (\ref{1swaparch}) follows from $\alpha_{1}^{w}+\alpha_{2}^{w}+\alpha_{3}^{w}=0$ and conversion with $\Gamma_{\R}(s):= \pi^{-s/2}\Gamma(s/2)$. The validity of (\ref{1swaparch}) on the larger domain $\epsilon_{0}<\sigma<1-\epsilon_{0}$ follows from analytic continuation.  This completes the proof.
\end{proof}

\begin{cor}
Let $\mathcal{M}_{-\alpha}^{1}(s; H)$ be defined in (\ref{1swapterm}). We have
\begin{align}\label{eq: simpler1swap}
    \mathcal{J}_{(0,1)}^{\min}(s;  h, \widetilde{\Phi}) \ = \  \frac{1}{2}\,\mathcal{M}_{-\alpha}^{1}(s; H). 
\end{align}    
\end{cor}

\begin{proof}
    Putting Propositions \ref{1swalem} and  \ref{1swaprop} together, we notice that a pair of factors $\Gamma_{\R}(1-\alpha_{2}^{w}+\alpha_{3}^{w})$  (resp. $\Gamma_{\R}(1-\alpha_{1}^{w}+\alpha_{3}^{w})$) cancels. Explicating the Weyl actions of $w_{2}, w_{4}, w_{\ell}$ on the parameters $(\alpha_{1}, \alpha_{2}, \alpha_{3})$ described in (\ref{weylactLan}) and replacing $\Phi \to \widetilde{\Phi}$, the desired result follows. 
\end{proof}



\subsection{Degenerate terms---II}\label{degt}
The main task of this section is to evaluate $\mathcal{J}_{(*,0)}^{\min}(s; \, h, \, \Phi) $ defined in Proposition \ref{prop: decompo}. Interestingly, the $\gamma$-sum in (\ref{(0,1)}), which arises from the $\Gamma_{3}$ Fourier expansion of $\Phi$, will transform into the $\Gamma_{2}$ Eisenstein series, and produce three of the main terms for the fourth moment of the $\zeta$-function. This is also essential for the analytic continuation of (\ref{modperiodcons}).  Such phenomena do not appear in the first two moments of $\mathrm{GL}(2)$ $L$-functions (\cite{Mo92}).

\begin{prop}\label{degprop}
	For $1/2+\epsilon_{0}< \sigma< 4$, we have
    \begin{align}\label{degexpr}
\mathcal{J}_{(*,0)}^{\min}(s; \, h, \, \widetilde{\Phi})\ =  \ \mathcal{R}_{-\alpha}(s; H).
	\end{align}
\end{prop}

\begin{proof}
We have
\begin{align}\label{degterms}
	 \mathcal{J}_{(*,0)}^{\min}(s; \, h, \, \Phi) \ = \   \int_{0}^{\infty}  \int_{0}^{\infty} \  h(y_{1})(y_{0}^2y_{1})^{s-\frac{1}{2}}    \sum_{\gamma\in [\Gamma_{2}]} 
	\ \sum_{n_{1}= 1}^{\infty}  \,  \int_{[\mrN_{12}]} \, \mathcal{W}_{(n_{1},0)}( \gamma\mbn_{12}
	\mby; \, \Phi) \overline{\psi(\mbn_{12})} \ d\mbn_{12} \, d^{*}\mby.
\end{align}
	For any $\gamma\in \Gamma_{2}$, we have the Iwasawa decomposition:
	\begin{align}
		\hspace{15pt} \gamma \begin{pmatrix}
			1 & u \\
			& 1
		\end{pmatrix}
		\begin{pmatrix}
			y_{0}y_{1} & \\
			& y_{0}
		\end{pmatrix} 
		\ \equiv \  y_{0}y_{1}^{1/2} \begin{pmatrix}
			1 & \re \gamma z\\
			& 1
		\end{pmatrix}
		\begin{pmatrix}
			(\im \gamma z)^{1/2} &       \\
			& 	(\im \gamma z)^{-1/2} 
		\end{pmatrix} \hspace{15pt} \left(\bmod\, \mathrm{SO}(2)\right).\nonumber
	\end{align}
	where  $z  :=  u + i y_{1}$.  Using this together with the equivariance of $	\mathcal{W}_{(n_{1}, 0)}$, observe that
\begin{align*}
    \mathcal{W}_{(n_{1},0)}( \gamma\mbn_{12}(u)
	\mby; \, \Phi) \, = \, \mathcal{W}_{(n_{1}, 0)}\big[\big(y_{0} (y_{1}/\im \gamma z)^{1/2} ,  \, \im \gamma z \big); \, \Phi\big]. 
\end{align*}
By this and Lemma \ref{expldegterm}, we have
	\begin{align}
		 \mathcal{J}_{(*,0)}^{\min}(s; \, h, \, \Phi)
		&	\, =\,  \sum_{w=w_{3}, w_{5}, w_{\ell}} \,    \sum_{\gamma\in [\Gamma_{2}]} 
		\, \sum_{n_{1}= 1}^{\infty}  \, \frac{\zeta(w, \alpha, \psi_{(n_{1},0)})}{|n_{1}|^{1-\alpha_{3}^{w}}} \nonumber\\
		& \hspace{30pt} \cdot   \int_{0}^{\infty}  \int_{0}^{\infty}  \int_{0}^{1} \,  h(y_{1}) (y_{0}^2y_{1})^{s-\frac{1}{2}}     W^{(1,0)}_{\alpha,\, w}\big(n_{1}y_{0} (y_{1}/\im \gamma z)^{1/2}, \, \im \gamma z\big) e(-u) \ du\,    d^{*}\mby. \nonumber
	\end{align}
	It follows from the change of variables  $y_{0} \to n_{1}^{-1} (y_{1}/ \im \gamma z)^{-1/2} y_{0}$ that 
	\begin{align}
		 \mathcal{J}_{(*,0)}^{\min}(s; \, h, \, \Phi) \ = \  	\sum_{w=w_{3}, w_{5}, w_{\ell}} \   &
		\ \Big(\sum_{n_{1}= 1}^{\infty}  \ \frac{\zeta(w, \alpha, \psi_{(n_{1},0)})}{|n_{1}|^{1-\alpha_{3}^{w} +(2s-1)}} \Big)  \nonumber\\
		& \hspace{-30pt}\cdot   \sum_{\gamma\in [\Gamma_{2}]}\,  \int_{0}^{\infty}  \int_{0}^{\infty}  \int_{0}^{1} \  h(y_{1})(y_{0}^2 \im \gamma z)^{s-\frac{1}{2}}  W^{(1,0)}_{\alpha,\, w}(y_{0}, \, \im \gamma z) e(-u) \ du\,    d^{*}\mby. \nonumber
	\end{align}
	(Note: $z$ does not depend on $y_{0}$!) Now, substitute (\ref{anodegWhi}) for  $ W^{(1,0)}_{\alpha, \, w}(\cdots)$, we find that
	\begin{align}\label{explic10Wh}
 \mathcal{J}_{(*,0)}^{\min}(s; \, h, \, \Phi)\, = \,	  \sum_{w=w_{3}, w_{5}, w_{\ell}} \  & \Gamma_{\R}\left( 1+\alpha_{1}^{w}-\alpha_{2}^{w}\right) \Gamma_{\R}\left(1+ \alpha_{1}^{w}-\alpha_{3}^{w}\right)
		\  \nonumber\\
		 \ &\cdot \   \Big(\sum_{n_{1}= 1}^{\infty}  \ \frac{\zeta(w, \alpha, \psi_{(n_{1},0)})}{|n_{1}|^{2s-\alpha_{3}^{w} }} \Big) \Big(  \int_{0}^{\infty}  \ W_{(\alpha_{2}^{w}-\alpha_{3}^{w})/2}(y_{0}) y_{0}^{2s-1+ (1+\alpha_{1}^{w})/2} \ d^{\times} y_{0}\Big) \ \nonumber\\
		& \hspace{25pt} \  \cdot \  \ \sum_{\gamma\in [\Gamma_{2}]}  \  \int_{0}^{\infty}  \int_{0}^{1} \  h(y_{1})(\im \gamma z)^{s-\frac{1}{2}+(1+\alpha_{1}^{w})} e(-u) \ du\,   \frac{dy_{1}}{y_{1}^2}. 
	\end{align}
	In line (\ref{explic10Wh}),  we move the $\gamma$-sum inside the double integral, and thus, \footnote{ The constant multiple $2$ comes from the definition of the Eisenstein series! }
	\begin{align}\label{disEis}
	 \mathcal{J}_{(*,0)}^{\min}(s; \, h, \, \Phi) \, = \,	 \ \sum_{w=w_{3}, w_{5}, w_{\ell}} \ &  \Gamma_{\R}\left( 1+\alpha_{1}^{w}-\alpha_{2}^{w}\right) \Gamma_{\R}\left(1+ \alpha_{1}^{w}-\alpha_{3}^{w}\right) \nonumber\\
		& \cdot \ 
		\ \Big(\sum_{n_{1}= 1}^{\infty}  \ \frac{\zeta(w, \alpha, \psi_{(n_{1},0)})}{|n_{1}|^{2s-\alpha_{3}^{w} }} \Big) \Big(  \int_{0}^{\infty}  \ W_{(\alpha_{2}^{w}-\alpha_{3}^{w})/2}(y_{0}) y_{0}^{2s-1+ (1+\alpha_{1}^{w})/2} \ d^{\times} y_{0}\Big)\nonumber\\
		&\hspace{60pt} \ \cdot \  \Big(\, 2\  \int_{0}^{\infty}   \, h(y_{1}) \,  \int_{0}^{1} \, E\left(z; s+ \alpha_{1}^{w} \right)e(-u) \, du\,   \frac{dy_{1}}{y_{1}^2}\Big).
	\end{align}
	
	It remains to evaluate the three expressions in $(\cdots)$ of  (\ref{disEis}). Firstly, from  (\ref{zetaexpldegterm}), we have
	\begin{align}\label{zeta4}
		\sum_{n_{1}= 1}^{\infty}  \ \frac{\zeta(w, \alpha, \psi_{(n_{1},0)})}{|n_{1}|^{2s-\alpha_{3}^{w} }} \ \  &= \ \   \zeta\left(1+ \alpha_{1}^{w}-\alpha_{2}^{w}\right)\zeta\left(1+ \alpha_{1}^{w}-\alpha_{3}^{w}\right) 	\sum_{n_{1}= 1}^{\infty}  \ \frac{\sigma_{\alpha_{2}^{w}-\alpha_{3}^{w}}(|n_{1}|)}{|n_{1}|^{2s-\alpha_{3}^{w} }} \nonumber\\
		\ \ &= \ \   \zeta\left(1+ \alpha_{1}^{w}-\alpha_{2}^{w}\right)\zeta\left(1+ \alpha_{1}^{w}-\alpha_{3}^{w}\right)  \zeta\left(2s-\alpha_{3}^{w}\right)\zeta(2s-\alpha_{2}^{w})
	\end{align}
    for $\sigma > 1/2+\epsilon_{0}$. Secondly, using (\ref{melwhitgl2}) and  the relation $\alpha_{1}^{w}+\alpha_{2}^{w}+\alpha_{3}^{w}=0$, we have
	\begin{align}\label{weygl2trans}
		 \int_{0}^{\infty}  \, W_{(\alpha_{2}^{w}-\alpha_{3}^{w})/2}(y_{0}) y_{0}^{2s-1+ (1+\alpha_{1}^{w})/2} \ \frac{dy_{0}}{y_{0}}
		\  = \  \frac{1}{2}\, \Gamma_{\R}(2s-\alpha_{3}^{w})  \Gamma_{\R}(2s- \alpha_{2}^{w})
	\end{align}
    for $\sigma> \epsilon_{0}$. Thirdly, it follows from (\ref{eisfournorm}) and  \eqref{eqn whitranseq} that
	\begin{align}\label{redisfour}
		2\,  \int_{0}^{\infty}   \, h(y_{1}) \,  \int_{0}^{1} \, E(z; s+ \alpha_{1}^{w} )e(-u) \ du\   \frac{dy_{1}}{y_{1}^2} 
		\  =  \    2\, \frac{H(s+ \alpha_{1}^{w})}{\Lambda(1+2(s+\alpha_{1}^{w}))} \, .
	\end{align}
	Now, from  (\ref{disEis}), (\ref{zeta4}), (\ref{weygl2trans}) and (\ref{redisfour}), it follows that $ \mathcal{J}_{(*,0)}^{\min}(s; \, h, \, \Phi)$ is given by
    \begin{align}
         \sum_{w=w_{3}, w_{5}, w_{\ell}}  \  H(s+ \alpha_{1}^{w})  \, \frac{ \Lambda(1+ \alpha_{1}^{w}-\alpha_{2}^{w})\Lambda(1+ \alpha_{1}^{w}-\alpha_{3}^{w}) \Lambda(2s+\alpha_{1}^{w}+ \alpha_{2}^{w})\Lambda(2s+\alpha_{1}^{w} +\alpha_{3}^{w})}{\Lambda(1+2(s+\alpha_{1}^{w}))}.
    \end{align}
Recall the Weyl actions of $w_{3}, w_{5}, w_{\ell}$ on the parameters $(\alpha_{1}, \alpha_{2}, \alpha_{3})$ in (\ref{weylactLan}) and replace $\Phi \to \widetilde{\Phi}$, the desired result follows. 
\end{proof}

The following result is immediate.
\begin{cor}\label{cor: contJmin*0}
    The function $ \mathcal{J}_{(*,0)}^{\min}(s;  h, \widetilde{\Phi})$ admits a holomorphic continuation to  $\epsilon_{0}< \sigma< 4$, except for the three simple poles at $s= (1-\alpha_{i})/2$ \ ($i=1,2,3$). 
\end{cor}

\subsection{Diagonal and preparation for off-diagonal: $\mathcal{J}^{\mathrm{reg}}(s; \, h, \, \widetilde{\Phi})$}\label{diagpreoff}

\begin{prop}\label{summar}
	On the vertical strip $1+\epsilon_{0}< \sigma < 4$, we have
	\begin{align}
\mathcal{J}(s; \, h, \,  \widetilde{\Phi}) \, \  = \ 	&   \frac{ 1}{2}\, \prod_{i=1}^{3}  \zeta(2s+\alpha_{i}) \      \int_{(0)}  \, H(\mu)\, \prod_{i=1}^{3} \, \prod_{\pm} \Gamma_{\R}(s\pm \mu-\alpha_{i})\ d^{W}\mu \  + \   \mathrm{OD}_{\alpha}(s)\nonumber\\
    \ &\hspace{30pt}+ \   \mathcal{J}_{(0,1)}^{\min}(s; \, h, \, \widetilde{\Phi})   \ - \  \mathcal{J}_{(*,0)}^{\min}(s; \, h, \, \widetilde{\Phi}), \label{eq: periodecom}
	\end{align}
	where
\begin{align}\label{secondcc}
\mathrm{OD}_{\alpha}(s) \ &= \    \lim_{\phi\to \pi/2} \ \frac{1}{4}  \ \int_{(1+2\epsilon_{0})} \   \zeta\left(2s-s_{0}\right) \prod_{i=1}^{3} \, \zeta(s_{0}+\alpha_{i})  \, \sum_{\pm} (\mathcal{F}_{\alpha}^{\pm}H)(s_{0}, \, s; \,  \phi) \ \frac{ds_{0}}{2\pi i}.
\end{align} 
\end{prop}

\begin{proof}
Recall Proposition \ref{prop: decompo} on the decomposition of periods. From \cite[eq. (6.7)]{Kw23}, we have
			\begin{align}
			 \mathcal{J}^{\mathrm{reg}}(s; \, h, \, \widetilde{\Phi}) \, = \, 	2\cdot  L(2s, \Phi) \, \int_{0}^{\infty} \int_{0}^{\infty} h(y_{1})(y_{0}^2 y_{1})^{s-\frac{1}{2}} W_{-\alpha}(y_{0}, y_{1}) \, d^{*}\mby \ \  + \ \ \mathrm{OD}_{\alpha}(s),  \label{firstcc} 
			\end{align}
where the term $\mathrm{OD}_{\alpha}(s)$ denotes the ``off-diagonal'', i.e., terms with  $a_{0}a_{1}\neq 0$ in (\ref{usual}). The first term of (\ref{firstcc}) can be evaluated by (\ref{gl3euspl}), (\ref{invers}) and \eqref{eqn sta1}.  By \cite[eq. (6.8)]{Kw23}, we have 
            \begin{align}
   \mathrm{OD}_{\alpha}(s) \ &:= \ \sum_{a_{0}\neq 0} \sum_{a_{1}\neq 0}  \  \frac{\mathcal{B}_{\Phi}(1,a_{1})}{|a_{0}|^{2s-1}|a_{1}|} \cdot  \int_{0}^{\infty} \int_{0}^{\infty} h(y_{1})\, (y_{0}^2 y_{1})^{s-\frac{1}{2}} \, e\Big( \frac{a_{1}}{a_{0}}\, \frac{y_{0}^2}{1+y_{0}^2}\Big) \nonumber\\
	& \hspace{180pt} \cdot   \,	W_{-\alpha}\Big( \big|\frac{a_{1}}{a_{0}}\big| \, \frac{y_{0}}{1+y_{0}^2},  \ y_{1} \sqrt{1+y_{0}^2} \Big)   \   d^{*}\mby, \label{eq: ODinDSwhitt}
\end{align}
where the expression (\ref{eq: ODinDSwhitt}) converges absolutely whenever $1+\epsilon_{0}< \sigma< 4$ and for $H \in \mathcal{C}_{\eta}$ \ ($\eta>40$). By \cite[Proposition 7.2]{Kw23} and (\ref{gl3euspl}), this can be written as (\ref{secondcc}). This completes the proof.     
\end{proof}


\subsection{Symmetries of the integral transform}\label{sect: 23swap}
The symmetries of $(\mathcal{F}_{\alpha}H)(s_{0}, s)$ play a crucial role in establishing the CFKRS conjecture for the cubic moment and are important for the analytic continuation argument in Section \ref{offdiagEis}.

\begin{prop}\label{Eissimpl}
The function $s\mapsto 	\left(\mathcal{F}_{\alpha}H\right)\left(1-\alpha_{1},s\right) $	 admits a holomorphic continuation to $\epsilon_{0}< \sigma< 4$ except for  a simple pole  $s=(1-\alpha_{1})/2$. Furthermore, we have
\begin{align}\label{ressym}
	 	(\mathcal{F}_{\alpha}H)(1-\alpha_{1}, \, s)  \, = \,  2 \  \frac{\Gamma_{\R}( 2s-1+\alpha_{1})}{\Gamma_{\R}( 2-2s-\alpha_{1})}  \int_{(0)} \, H(\mu)  \, \prod_{\pm} \, \Gamma_{\R}( s\pm \mu-\alpha_{1})\prod_{j=2}^{3}\Gamma_{\R}(1-s\pm \mu+\alpha_{j})    \ d^{W}\mu
\end{align}
on the strip  $\epsilon_{0}< \sigma<1-\epsilon_{0}$. The results for  $\left(\mathcal{F}_{\alpha}H\right)\left(1-\alpha_{i},s\right)$ ($i=2,3$) follow by symmetry. 
\end{prop}

\begin{proof}
	The holomorphy of $s\mapsto 	\left(\mathcal{F}_{\alpha}H\right)\left(1-\alpha_{1},s\right) $ on $\frac{1}{2}+\epsilon_{0}  <  \sigma  < 4$ follows immediately from Proposition \ref{anconpr}.  Next, we consider a smaller domain  $\frac{1}{2}+\epsilon_{0}  <  \sigma  < 1-\epsilon_{0}$. Put $s_{0}=1-\alpha_{1}$. In (\ref{streamlinebarnes}), the factor $\Gamma\left(\frac{1-u}{2}\right)$ in the denominator cancels with  $\Gamma\left( \frac{s_{0}+\alpha_{1}-u}{2}\right)$ in the numerator, and hence,
	\begin{align}\label{applBarn}
	\left(\mathcal{F}_{\alpha}H\right)\left(1-\alpha_{1},s\right)   	\ = \  &  \pi^{\alpha_{1}-1/2} \  \int_{(\eta-1/2)}  \ \widetilde{h}\big(s-s_{1}-\frac{1}{2}\big)  \frac{\pi^{-s_{1}}   \prod_{i=1}^{3} \ \Gamma\left( \frac{s_{1}-\alpha_{i}}{2}\right)  }{ \Gamma\big(\frac{s_{1}-1}{2}+ \alpha_{1}\big)}\nonumber\\
		& \hspace{-10pt}\cdot \, \int_{(\epsilon_{0})}  \ \frac{\Gamma(\frac{u}{2}) \Gamma\big(\frac{s_{1}+\alpha_{1}+u}{2}-s\big) \Gamma\big(\frac{1-\alpha_{1}+\alpha_{2}-u}{2}\big) \Gamma\big(\frac{1-\alpha_{1}+\alpha_{3}-u}{2}\big) \Gamma\big(s+\frac{-1+\alpha_{1}-u}{2}\big)}{\Gamma\big( \frac{1-\alpha_{1}+s_{1}-u}{2}\big)} \ \frac{du}{2\pi i} \ \frac{ds_{1}}{2\pi i}. 
	\end{align}	
    
 We take $(a,b,c; d,e)  :=  \big( \frac{1-\alpha_{1}+\alpha_{2}}{2}, \ \frac{1-\alpha_{1}+\alpha_{3}}{2}, \ s-\frac{1-\alpha_{1}}{2}; \ 0, \ \frac{s_{1}+\alpha_{1}}{2}-s  \big)$, and verify that
	\begin{align*}
		(a+b)+c+d+e \ = \  \Big(1-\alpha_{1}- \frac{\alpha_{1}}{2}\Big) + s-\frac{1-\alpha_{1}}{2}+ \frac{s_{1}+\alpha_{1}}{2}-s \ = \  \frac{s_{1}-\alpha_{1}+1}{2} \ \  (:=f). 
	\end{align*}
    By Lemma \ref{secBarn} and a change of variables $u\to -2u$,	the  $u$-integral of (\ref{applBarn}) is equal to 
	\begin{align}\label{u2swapbarnes}
		2 \cdot \frac{\Gamma\big( \frac{1-\alpha_{1}+\alpha_{2}}{2}\big) \Gamma\big(\frac{1-\alpha_{1}+\alpha_{3}}{2}\big) \Gamma\big( s-\frac{1}{2}+ \frac{\alpha_{1}}{2}\big) \Gamma\big(\frac{s_{1}+1+\alpha_{2}}{2}-s\big) \Gamma\big( \frac{s_{1}+1+\alpha_{3}}{2}-s\big)\Gamma\big( \frac{s_{1}-1}{2}+\alpha_{1}\big)}{\Gamma\big( \frac{s_{1}-\alpha_{2}}{2}\big) \Gamma\big( \frac{s_{1}-\alpha_{3}}{2}\big) \Gamma\big( 1-\alpha_{1}-s +\frac{s_{1}}{2}\big)}. 
	\end{align}

    Now, observe that the factors  $\Gamma( \frac{s_{1}-\alpha_{2}}{2}) \Gamma( \frac{s_{1}-\alpha_{3}}{2}) $ occur in both the denominator of (\ref{u2swapbarnes}) and the numerator of the first line of  (\ref{applBarn}). Making this cancellation, we have
	\begin{align}
		\hspace{-10pt} \left(\mathcal{F}_{\alpha}H\right)\left(1-\alpha_{1},s\right)   	
		\ = \ &  \ \Gamma\Big( \frac{1-\alpha_{1}+\alpha_{2}}{2}\Big) \Gamma\Big(\frac{1-\alpha_{1}+\alpha_{3}}{2}\Big) \Gamma\Big( s-\frac{1}{2}+ \frac{\alpha_{1}}{2}\Big)  \nonumber\\
		& \hspace{-5pt}\cdot\,  2 \pi^{\alpha_{1}-1/2} \ \int_{(\eta-1/2)} \ \widetilde{h}\big(s-s_{1}-\frac{1}{2}\big) \ \frac{\pi^{-s_{1}} \Gamma\big( \frac{s_{1}-\alpha_{1}}{2}\big)\Gamma\big(\frac{s_{1}+1+\alpha_{2}}{2}-s\big) \Gamma\big( \frac{s_{1}+1+\alpha_{3}}{2}-s\big) }{\Gamma\big( 1-\alpha_{1}-s +\frac{s_{1}}{2}\big)} \ \frac{ds_{1}}{2\pi i}. 
	\end{align}
	 Let $\sigma_{1}\in (2\sigma-1+\epsilon_{0},\, \sigma)$. Using  Lemma \ref{mellingl2}, we have
	\begin{align}\label{barnbarn}
\hspace{-10pt}\left(\mathcal{F}_{\alpha}H\right)\left(1-\alpha_{1},s\right)   	
		\ = \ &  \pi^{\alpha_{1}-1/2-s} \  \Gamma\Big( \frac{1-\alpha_{1}+\alpha_{2}}{2}\Big) \Gamma\Big(\frac{1-\alpha_{1}+\alpha_{3}}{2}\Big) \Gamma\Big( s-\frac{1}{2}+ \frac{\alpha_{1}}{2}\Big)  \int_{(0)}  \ H(\mu)  \nonumber\\
		& \hspace{10pt}\cdot  \ \int_{(\sigma_{1})} \ \ \frac{\Gamma\big( \frac{s_{1}-\alpha_{1}}{2}\big)\Gamma\big(\frac{s_{1}+1+\alpha_{2}}{2}-s\big) \Gamma\big( \frac{s_{1}+1+\alpha_{3}}{2}-s\big) \Gamma\big(\frac{s-s_{1}+\mu}{2}\big) \Gamma\big(\frac{s-s_{1}-\mu}{2}\big) }{\Gamma\big( 1-\alpha_{1}-s +\frac{s_{1}}{2}\big)} \ \frac{ds_{1}}{2\pi i} \  d^{W}\mu. 
	\end{align}

In the $s_{1}$-integral of (\ref{barnbarn}), we apply the change of variables $s_{1}\to 2s_{1}$, and Lemma \ref{secBarn} the second time but with $(a,b,c;d, e)  := ( -\frac{\alpha_{1}}{2}, \ \frac{1+\alpha_{2}}{2}-s, \ \frac{1+\alpha_{3}}{2}-s; \  \frac{s+\mu}{2}, \ \frac{s-\mu}{2}).$ The $s_{1}$-integral is now equal to 
	\begin{align}\label{s1intBar2swap}
		2\cdot 	\frac{\Gamma\big( \frac{s+\mu-\alpha_{1}}{2}\big) \Gamma\big( \frac{s-\mu-\alpha_{1}}{2}\big) \Gamma\big( \frac{1+\alpha_{2}+\mu-s}{2}\big)  \Gamma\big( \frac{1+\alpha_{2}-\mu-s}{2}\big) \Gamma\big( \frac{1+\alpha_{3}+\mu-s}{2}\big)  \Gamma\big( \frac{1+\alpha_{3}-\mu-s}{2}\big)}{\Gamma\big(1-s-\frac{\alpha_{1}}{2}\big) \Gamma\big(\frac{1-\alpha_{2}}{2}-\alpha_{1}\big) \Gamma\big(\frac{1-\alpha_{3}}{2}-\alpha_{1}\big)}, 
	\end{align}
	  upon observing that $a+(b+c)+d+e \, = \,  -\frac{\alpha_{1}}{2}  + (  1-\frac{\alpha_{1}}{2}-2s) \, + s\,  = \,   1-\alpha_{1}-s \, (:=f)$.

	The conclusion (\ref{ressym}) follows from the cancellation of $\Gamma(\frac{1-\alpha_{2}}{2}-\alpha_{1}) \Gamma(\frac{1-\alpha_{3}}{2}-\alpha_{1})$  in the denominator of (\ref{s1intBar2swap}) and $ \Gamma( \frac{1-\alpha_{1}+\alpha_{2}}{2}) \Gamma(\frac{1-\alpha_{1}+\alpha_{3}}{2}) $ in the first line of  (\ref{barnbarn}), since  $\alpha_{1}+\alpha_{2}+\alpha_{3}=0$. Finally, it is clear that (\ref{ressym}) is  holomorphic on $\epsilon_{0}<\sigma< 1-\epsilon_{0}$ except at $s=(1-\alpha_{1})/2$.  Combining this with the initial region of holomorphy, the continuation of  $s\mapsto 	\left(\mathcal{F}_{\alpha}H\right)\left(1-\alpha_{1},s\right) $ to $\epsilon_{0}< \sigma< 4$ is now established. 
\end{proof}

\begin{prop}\label{secMTcom}
	The function $s\mapsto 		\left(\mathcal{F}_{\alpha}H\right)\left(2s-1,  s \right) $ admits a holomorphic continuation to $\epsilon_{0}<\sigma <4$ except for three simple poles $s=(1-\alpha_{i})/2$ \, $(i=1,2,3)$.   On $\epsilon_{0} <  \sigma  < 1-\epsilon_{0}$, we have
	\begin{align}\label{secMTcomid}
		\left(\mathcal{F}_{\alpha}H\right)\left(2s-1,  s \right) 
		\ &= \ 2 \, \prod_{i=1}^{3} \ \frac{\Gamma_{\R}(2s-1+ \alpha_{i})}{\Gamma_{\R}(2-2s- \alpha_{i})} \, \int_{(0)} \,  H(\mu)\,    \prod_{i=1}^{3}  \ \prod_{\pm} \,  \Gamma_{\R}\big( 1-s+ \alpha_{i}\pm \mu\big) \ d^{W}\mu. 
	\end{align}
\end{prop}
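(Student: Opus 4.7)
The strategy mirrors that of Proposition~\ref{Eissimpl}: starting from the Mellin--Barnes representation~\eqref{streamlinebarnes}, two successive applications of the Second Barnes Lemma (Lemma~\ref{secBarn}), separated by the Mellin inversion supplied by Lemma~\ref{mellingl2}, collapse the transform to the desired form. Proposition~\ref{anconpr} already ensures that $s\mapsto(\mathcal{F}_{\alpha}H)(2s-1,s)$ is holomorphic on $\tfrac{1+\epsilon}{2}<\sigma<4$ (indeed, with $s_{0}=2s-1$ the conditions in~\eqref{newdomain} reduce to $\sigma>\tfrac{1+\epsilon}{2}$ and $1>\epsilon$), so it is enough to establish the identity~\eqref{secMTcomid} on the overlap $\tfrac{1+\epsilon}{2}<\sigma<1$; the right-hand side then meromorphically continues both sides to $0<\sigma<4$, with the three stated simple poles at $s=(1-\alpha_{i})/2$ supplied by the prefactor $\prod_{i}\Gamma(s-\tfrac12+\tfrac{\alpha_{i}}{2})$.

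The first reduction is a direct specialisation: substituting $s_{0}=2s-1$ into~\eqref{streamlinebarnes} produces the cancellation $\Gamma\!\bigl(s-\tfrac{s_{0}+u}{2}\bigr)=\Gamma\!\bigl(\tfrac{1-u}{2}\bigr)$, which eliminates the $\Gamma(\tfrac{1-u}{2})$ denominator. After the change of variables $u\mapsto -2v$, the inner integral takes the Second Barnes form
\begin{equation*}
\int \frac{\Gamma(-v)\,\Gamma\!\bigl(\tfrac{s_{1}}{2}-2s+1-v\bigr)\,\prod_{i=1}^{3}\Gamma\!\bigl(v+s-\tfrac12+\tfrac{\alpha_{i}}{2}\bigr)}{\Gamma\!\bigl(v+\tfrac{s_{1}}{2}+s-\tfrac12\bigr)}\,\frac{dv}{2\pi i},
\end{equation*}
and Lemma~\ref{secBarn} applies with $(a,b,c)=(s-\tfrac12+\tfrac{\alpha_{i}}{2})_{i=1,2,3}$, $(d,e)=\bigl(0,\,\tfrac{s_{1}}{2}-2s+1\bigr)$, and $f=\tfrac{s_{1}}{2}+s-\tfrac12$; the balance $a+b+c+d+e=f$ follows immediately from $\alpha_{1}+\alpha_{2}+\alpha_{3}=0$. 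The evaluation produces $\prod_{i}\Gamma((s_{1}-\alpha_{i})/2)$ in the denominator, which cancels exactly the corresponding factor already sitting in the outer integrand of~\eqref{streamlinebarnes}, leaving
\begin{equation*}
(\mathcal{F}_{\alpha}H)(2s-1,s)=2\sqrt{\pi}\prod_{i=1}^{3}\Gamma\!\Bigl(s-\tfrac12+\tfrac{\alpha_{i}}{2}\Bigr)\int_{(\eta-\frac12)}\widetilde h\!\Bigl(s-s_{1}-\tfrac12\Bigr)\,\pi^{-s_{1}}\,\frac{\prod_{i=1}^{3}\Gamma\!\bigl(\tfrac{s_{1}+1+\alpha_{i}}{2}-s\bigr)}{\Gamma\!\bigl(\tfrac{s_{1}+3}{2}-2s\bigr)}\,\frac{ds_{1}}{2\pi i}.
\end{equation*}

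For $\sigma<1$ the $s_{1}$-contour can be shifted from $\re s_{1}=\eta-\tfrac12$ down to a line $\re s_{1}=\sigma_{1}$ with $\max(2\sigma-1,0)<\sigma_{1}<\sigma$, crossing no poles of $\prod_{i}\Gamma\!\bigl(\tfrac{s_{1}+1+\alpha_{i}}{2}-s\bigr)$ since those lie at $\re s_{1}\le 2\sigma-1$. The inequality $\sigma_{1}<\sigma$ is precisely what is needed to invoke Lemma~\ref{mellingl2} and replace $\widetilde h(s-s_{1}-\tfrac12)$ by its Mellin--Barnes representation in an auxiliary variable $\mu$ over $\re\mu=0$. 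A second application of Lemma~\ref{secBarn}, now to the inner $s_{1}$-integral with the substitution $s_{1}\mapsto 2w$ and parameters $(a,b,c)=\bigl((1+\alpha_{i})/2-s\bigr)_{i}$, $(d,e)=\bigl(\tfrac{s+\mu}{2},\tfrac{s-\mu}{2}\bigr)$, $f=\tfrac{3}{2}-2s$ (again balanced thanks to $\alpha_{1}+\alpha_{2}+\alpha_{3}=0$), yields the product $\prod_{i,\pm}\Gamma((1-s+\alpha_{i}\pm\mu)/2)$ in the numerator and $\prod_{i}\Gamma(1-s-\alpha_{i}/2)$ in the denominator. Assembling this with the prefactor $\prod_{i}\Gamma(s-\tfrac12+\tfrac{\alpha_{i}}{2})$ already extracted from the first Barnes evaluation, and tracking the remaining powers of $\pi$, gives exactly~\eqref{secMTcomid}. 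The principal bookkeeping obstacle is to preserve the Barnes convention for the contours across the two variable changes $u\mapsto-2v$ and $s_{1}\mapsto 2w$ and to verify that no residues are picked up during the leftward shift of the $s_{1}$-contour; everything else is algebraic manipulation of gamma factors.
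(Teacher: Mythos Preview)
Your proposal is correct and follows essentially the same approach as the paper's proof in Appendix~\ref{prof3swap}: the same cancellation $\Gamma(s-\tfrac{s_0+u}{2})=\Gamma(\tfrac{1-u}{2})$ upon specialising $s_0=2s-1$, the same two applications of the Second Barnes Lemma with identical parameter choices, the same intermediate contour shift of the $s_1$-integral before invoking Lemma~\ref{mellingl2}, and the same cancellation of the $\prod_i\Gamma((s_1-\alpha_i)/2)$ factors. The only cosmetic difference is that you state the initial region of holomorphy as $\tfrac{1+\epsilon}{2}<\sigma<4$ (which is what Proposition~\ref{anconpr} actually gives) whereas the paper writes the slightly smaller $\tfrac12+\epsilon<\sigma<4$.
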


\begin{proof}
The argument also appears in \cite{Kw23}; for completeness, we include a proof here. Suppose $1/2+\epsilon_{0}  <  \sigma  < 4$ and  $s_{0}=2s-1$. In (\ref{streamlinebarnes}), the factor $\Gamma\left(\frac{1-u}{2}\right)$ in the denominator cancels with $\Gamma\left(s-\frac{s_{0}+u}{2}\right)$ in the numerator. This leads to
		\begin{align}\label{firuseBar}
			\left(\mathcal{F}_{\alpha} H\right)\left(2s-1,  s \right)  \ = \   &\pi^{3/2-2s} \  \int_{(\eta-1/2)}  \ \widetilde{h}\Big(s-s_{1}-\frac{1}{2}\Big)  \frac{\pi^{-s_{1}}   \prod_{i=1}^{3} \ \Gamma\big( \frac{s_{1}-\alpha_{i}}{2}\big)  }{  \Gamma\big(\frac{1+s_{1}}{2}+1-2s\big)} \nonumber\\
			& \hspace{60pt} \ \cdot \  \int_{(\epsilon_{0})}  \   \frac{ \Gamma\left( \frac{u}{2}\right) \Gamma\left( \frac{u+s_{1}}{2}+ 1-2s\right)  \cdot   \prod_{i=1}^{3} \ \Gamma\left(s-\frac{1}{2}+ \frac{\alpha_{i}-u}{2}\right)   }{ \Gamma\left(s-\frac{1}{2}+\frac{s_{1}-u}{2}\right)}  \  \frac{du}{2\pi i} \ \frac{ds_{1}}{2\pi i}. 
		\end{align}
		
	 Make the change of variables $u\to -2u$ and take $	\left(a,b,c\right)=  \left(s-\frac{1}{2}+ \frac{\alpha_{1}}{2}, \ s-\frac{1}{2}+ \frac{\alpha_{2}}{2}, \ s-\frac{1}{2}+ \frac{\alpha_{3}}{2} \right)$ and $(d,e)= ( 0, \ \frac{s_{1}}{2}+1-2s)$. By Lemma \ref{secBarn},  we find that the $u$-integral is equal to 
		\begin{align}
			2 \,  \prod_{i=1}^{3} \  \Gamma\Big(s-\frac{1}{2}+ \frac{\alpha_{i}}{2}\Big) \Gamma\Big(\frac{s_{1}+1+\alpha_{i}}{2}-s\Big)\Gamma\Big( \frac{s_{1}-\alpha_{i}}{2}\Big)^{-1}. \label{3swapuBarn}
		\end{align}
	Observe that the three $\Gamma$-factors in the numerator of the first line of (\ref{firuseBar}) cancel with those in (\ref{3swapuBarn}).  Hence, we have
		\begin{align}
			\left(\mathcal{F}_{\alpha} H\right)\left(2s-1,  s \right)  \ = \   & 2\pi^{3/2-2s} \,  \prod_{i=1}^{3} \, \Gamma\big(s-\frac{1}{2}+ \frac{\alpha_{i}}{2}\big)   \int_{(\eta-1/2)}  \ \widetilde{h}\big(s-s_{1}-\frac{1}{2}\big) \,  \frac{\pi^{-s_{1}} \prod_{i=1}^{3} \Gamma\big(\frac{s_{1}+1+\alpha_{i}}{2}-s\big)   }{  \Gamma\big(\frac{1+s_{1}}{2}+1-2s\big)} \  \frac{ds_{1}}{2\pi i}. \nonumber
		\end{align}
		
		We further restrict to $1/2+\epsilon_{0} < \sigma< 1-\epsilon_{0}$. We shift the line of integration to the left from $\re s_{1}=\eta-1/2$ to $\re s_{1}=\sigma_{1} \in ( 2\sigma-1 +\epsilon_{0},\, \sigma)$. No pole is crossed and we apply Lemma \ref{mellingl2}:
		\begin{align}
			\left(\mathcal{F}_{\alpha} H\right)\left(2s-1,  s \right) 
			\ &= \   \pi^{\frac{3}{2}-3s} \, \prod_{i=1}^{3} \ \Gamma\big(s-\frac{1}{2}+ \frac{\alpha_{i}}{2}\big) \, \int_{(0)} \, H(\mu)      \int_{(\sigma_{1})} \, \frac{\prod_{i=1}^{3} \Gamma\big(\frac{s_{1}+1+\alpha_{i}}{2}-s\big) \,\prod_{\pm} \, \Gamma\big(\frac{s-s_{1}\pm\mu}{2}\big)  }{\Gamma( \frac{1+s_{1}}{2}+1-2s)} \ \frac{ds_{1}}{2\pi i} \ d^{W}\mu. \nonumber
		\end{align}
        We apply the change of variable $s_{1}\to 2s_{1}$, and Lemma \ref{secBarn} the second time, but with  $(a,b, c)$ $=$ $(\frac{1}{2}-s+ \frac{\alpha_{1}}{2}, \ \frac{1}{2}-s+ \frac{\alpha_{2}}{2}, \ \frac{1}{2}-s+ \frac{\alpha_{3}}{2})$ and $	\left( d,e\right) =  \left( \frac{s+\mu}{2}, \  \frac{s-\mu}{2} \right)$. The  result follows as the $s_{1}$-integral becomes 
		\begin{align*}
			2\ \prod_{i=1}^{3}\, 	 \prod_{\pm} \  \Gamma\Big( \frac{1-s+ \alpha_{i}\pm \mu}{2}\Big)\Gamma\Big(1-s- \frac{\alpha_{i}}{2}\Big)^{-1}. 
		\end{align*}
\end{proof}


\subsection{Analytic continuation and polar terms}\label{offdiagEis}

The analytic continuation argument performed  in \cite{Kw23} is robust and carries over to the present case with minor modifications.

\begin{prop}\label{contod}
	The function $\mathrm{OD}_{\alpha}(s)$ admits a meromorphic continuation to the domain $1/4  + \epsilon_{0}\, <  \, \sigma \, < \, 4$.  On the smaller domain  $1/4 +\epsilon_{0}\, < \,  \sigma \, < \,  3/4$, the following equality holds:
	\begin{align}
		4\cdot \mathrm{OD}_{\alpha}(s) \ = \  & \    \ \sum_{i=1}^{3} \ \zeta(2s- 1+\alpha_{i}) \prod_{\substack{ 1\le j\le 3\\ j\neq i}}\zeta(1-\alpha_{i}+\alpha_{j})  	\left(\mathcal{F}_{\alpha}H\right)\left(1-\alpha_{i},  s\right)   \label{2swapss} \\
		&\hspace{40pt} \ + \  \ \prod_{i=1}^{3} \zeta(2s-1+\alpha_{i})   	\left(\mathcal{F}_{\alpha}H\right)\left(2s-1,  s\right)   \label{3swapss}   \\
		& \hspace{80pt} \ + \    \  \int_{(1/2)} \zeta(2s-s_{0}) \prod_{i=1}^{3}\,  \zeta(s_{0}+\alpha_{i})    	(\mathcal{F}_{\alpha}H)(s_{0},  s) \ \frac{ds_{0}}{2\pi i}. \label{fin: dualzeta}
	\end{align}
\end{prop}

\begin{proof}
 Readers are invited to consult  \cite[Section 9]{Kw23} for fuller details. Parallel to  \cite[Section 9A]{Kw23} (\textbf{`Step $1$'}), we shift the line of integration in (\ref{secondcc})   to $\re s_{0}=2\epsilon_{0}$. This time, however, we  pick up the residues of three extra simple poles at  $s_{0}  = 1-\alpha_{i} $ ($i=1,2,3$), which results in   
	\begin{align}\label{pickup}
		4\cdot 	\mathrm{OD}_{\alpha}(s) \ = \  \lim_{\phi \to \pi/2} \ \bigg\{ & \sum_{i=1}^{3} \ \zeta(2s- 1+\alpha_{i}) \prod_{\substack{ 1\le j\le 3\\ j\neq i}}\zeta(1-\alpha_{i}+\alpha_{j}) \sum_{\pm} (\mathcal{F}_{\alpha}^{\pm}H)(1-\alpha_{i}, \, s; \,  \phi) \nonumber\\
		&  \hspace{30pt} +   \int_{(2\epsilon_{0})} \zeta\left(2s-s_{0}\right)  \prod_{i=1}^{3}\,  \zeta(s_{0}+\alpha_{i})      \sum_{\pm} (\mathcal{F}_{\alpha}^{\pm}H)(s_{0}, \, s; \,  \phi) \ \  \frac{ds_{0}}{2\pi i} \,\bigg\}.
	\end{align}
	This serves as a holomorphic continuation to $1/2+\epsilon_{0} < \sigma <4$, except for the three simple poles at $s=1- \alpha_{i}/2$ ($i=1,2,3$).   Parallel to  \cite[Section 9B]{Kw23} (`\textbf{Step $2$}'),  we restrict to the strip $1/2+ \epsilon_{0} <  \sigma  <  3/4$, and shift the line of integration from $\re s_{0}=2\epsilon_{0}$ to $\re s_{0}=1/2$, crossing the simple pole of  \ $\zeta(2s-s_{0})$ with residue equal to  $-1$. The function on the second line of (\ref{pickup}) is holomorphic on the strip $1/4+\epsilon_{0}<\sigma< 3/4$.
    
 Section 9C of \cite{Kw23} (`\textbf{Step 3}') carries over to the present context without change as it merely takes care of the necessary regularity (using Proposition \ref{anconpr} and the imposed assumptions on our class of test functions).  Section 9D of \cite{Kw23}  (`\textbf{Step 4}') concerns the continuation of (\ref{2swapss}) and (\ref{3swapss}).  Here, we instead adopt an explicit approach and the desired conclusion follows from  Proposition \ref{Eissimpl}--\ref{secMTcom}.  This completes the proof. \footnote{ The content of \cite[Section 9E]{Kw23} (`\textbf{Step 5}') is postponed to Corollary \ref{totalcontdual} and Section \ref{CFKRS}.} 
	\end{proof}

\begin{cor}\label{totalcontdual}
    The function $\mathcal{J}(s; \, h, \,  \widetilde{\Phi})$ admits a meromorphic continuation to $1/4  + \epsilon_{0}\, <  \, \sigma \, < \, 4$.
\end{cor}

\begin{proof}
    In Proposition \ref{summar}, every term on the right-hand side of (\ref{eq: periodecom}), except for $\mathrm{OD}_{\alpha}(s)$, admits a meromorphic continuation to the vertical strip $\epsilon_{0} < \sigma < 4$; see Corollary \ref{cor: contibreak(0,1)min} and \ref{cor: contJmin*0}. Now, the desired result follows from Proposition \ref{contod}.
\end{proof}



\subsection{Agreement with CFKRS and completion of Theorem \ref{maingl3gl2Eiscase}}\label{CFKRS}
By Lemma \ref{comspec}, (\ref{modperiodcons}), and (\ref{warn}), we have
\begin{align}\label{abs: specid}
 \mathfrak{M}_{-\alpha}^{(3)}(s;  H) \ = \  2 \cdot \mathcal{J}(s; \, h, \,  \widetilde{\Phi})
\end{align}
on the vertical strip $3/2< \sigma< 4$. By Corollaries \ref{prop: spectralcont} and \ref{totalcontdual}, both sides of (\ref{abs: specid}) admit a meromorphic continuation to the strip $1/4+\epsilon_{0}<\sigma< 4$, and (\ref{abs: specid}) remains valid on the new strip.  Now, we restrict to  $1/4 + \epsilon_{0}  <  \sigma  < 3/4$. Putting Corollary \ref{prop: spectralcont}, (\ref{eq: periodecom}), (\ref{eq: simpler1swap}), (\ref{degexpr}), (\ref{2swapss}), (\ref{3swapss}), (\ref{fin: dualzeta}) together, 
\begin{align}
	& \mathfrak{M}_{-\alpha}^{(3)}(s;  H) \ + \  \mathcal{R}_{-\alpha}(s; H) \ + \ \mathcal{R}_{\alpha}(1-s; H) \\
&\hspace{30pt} 	\ = \    \prod_{i=1}^{3}\, \zeta(2s+\alpha_{i})    \int_{(0)}  \, H(\mu) \, \prod_{i=1}^{3} \, \prod_{\pm} \, \Gamma_{\R}(s\pm \mu-\alpha_{i}) \ d^{W}\mu  \  \ \ + \   \mathcal{M}_{-\alpha}^{1}(s; H) \label{tobem: 01swap} \\
	& \hspace{120pt}\ +\  \frac{1}{2} \ \sum_{i=1}^{3} \ \zeta(2s- 1+\alpha_{i}) \prod_{\substack{ 1\le j\le 3\\ j\neq i}}\zeta(1-\alpha_{i}+\alpha_{j})  \left(\mathcal{F}_{\alpha}H\right)\left(1-\alpha_{i},  s\right) \label{tobem: 2swap}\\
	&\hspace{160pt} \ + \   \frac{1}{2} \, \prod_{i=1}^{3}\, \zeta(2s-1+\alpha_{i})   \left(\mathcal{F}_{\alpha} H\right)\left(2s-1,  s\right)  \label{tobem: 3swap}   \\
	& \hspace{200pt} \ + \   \frac{1}{2}  \  \int_{(1/2)} \zeta\left(2s-s_{0}\right)\  \prod_{i=1}^{3} \   \zeta(s_{0}+\alpha_{i})  \left(\mathcal{F}_{\alpha}H\right)\left(s_{0},  s\right) \ \frac{ds_{0}}{2\pi i}. \label{inter: thmid}
	\end{align}

We complete the proof of Theorem \ref{maingl3gl2Eiscase} by demonstrating that the totality of the terms (\ref{tobem: 01swap}), (\ref{tobem: 2swap}) and (\ref{tobem: 3swap}) agree with the CFKRS predictions for the $\mathrm{SO}(\text{even})$ symmetry described in Section \ref{CFKRSSOU}, i.e., with $\sum_{0\le i\le 3} \, \mathcal{M}_{-\alpha}^{i}(s; H)$ defined in Theorem \ref{maingl3gl2Eiscase}. Indeed, observe that:
\begin{enumerate}
	\item \textbf{($0$ \& $1$-swap).} By $\alpha_{1}+\alpha_{2}+\alpha_{3}=0$, the first term of (\ref{tobem: 01swap}) can be written as $\mathcal{M}_{-\alpha}^{0}(s; H)$ as defined by (\ref{thm:0swapterm}). The second term of (\ref{tobem: 01swap}) is already in the form of the $1$-swap prediction. 

	\item \textbf{($2$-swap).}  By Proposition \ref{Eissimpl}  and the functional equation of the $\zeta$-function of the form
	\begin{align}\label{funczetaflip}
\zeta(2s-1+ \alpha_{i})  \frac{\Gamma_{\R}( 2s-1+\alpha_{i})}{\Gamma_{\R}( 2-2s-\alpha_{i})} \ = \  \zeta(2-2s-\alpha_{i}) \ = \ \zeta(2-2s + \sum_{\substack{1\le j\le 3\\ j\neq i}}\alpha_{j}),
	\end{align}
	it follows that eq. (\ref{tobem: 2swap}) coincides with $\mathcal{M}_{-\alpha}^{2}(s; H)$ as defined by (\ref{2swapterm}). 
    
	\item \textbf{($3$-swap).} Using Proposition \ref{secMTcom} and  (\ref{funczetaflip}) thrice, observe that (\ref{tobem: 3swap}) coincides with $\mathcal{M}_{-\alpha}^{3}(s; H)$ as defined by (\ref{3swapterm}). 
\end{enumerate}
We set $s=1/2$. The matching with the predictions in Section \ref{CFKOver} follows from the Weyl actions (\ref{weylactLan}). The restriction $\alpha_{1}+\alpha_{2}+\alpha_{3}=0$ can be removed by analytic continuation in $\alpha_{i}$'s.



\section{Concluding remarks}

\subsection{Agreement with the unitary CFKRS}\label{CFKRSU}
To align with the set-up of Section \ref{CFKUsym}, we re-label the dual moment of Theorem \ref{maingl3gl2Eiscase} as
\begin{align}\label{duomo}
	\int_{(\frac{1}{2}+\gamma)} \ \zeta(2s-s_{0})\zeta(s_{0}+\nu_{1}) \zeta(s_{0}+\nu_{2}) \zeta(s_{0}+\nu_{3}) \left(\mathcal{F}_{\nu}H\right)\left(s_{0},  s\right) \ \frac{ds_{0}}{2\pi i}.
\end{align}
Notice that the line of integration is shifted by a small quantity $\gamma$.  We put 
\begin{align}
    s \, := \, \frac{1+\delta}{2},
\end{align}
and apply the functional equation (\ref{riemannFE}) of the $\zeta$-function to the factor $ \zeta(s_{0}+\nu_{3})$, the moment (\ref{duomo}) takes the form:
\begin{align}
\frac{1}{2\pi} \ 	\int_{\R} \ \zeta\Big(\frac{1}{2}+\delta-\gamma-it\Big) &\zeta\Big(\frac{1}{2}-\gamma-\nu_{3}-it\Big)  \prod_{j=1}^{2} \, \zeta\Big(\frac{1}{2}+\gamma+\nu_{j}+it\Big) \nonumber\\
	&\hspace{20pt} \  \cdot \  \frac{\Gamma_{\R}(1/2-\gamma-\nu_{3}-it)}{\Gamma_{\R}(1/2+\gamma+\nu_{3}+it)} \ \left(\mathcal{F}_{\nu}H\right)\Big(\frac{1}{2}+\gamma+it,\   \frac{1+\delta}{2}\Big) \, dt.\nonumber
\end{align}
Following the setting of Section \ref{CFKUsym}, we  introduce  the linear re-parametrizations: 
\begin{align*}
	\begin{cases}
		\alpha_{1} \ =&  \gamma \ + \ \nu_{1}\\ 
		\alpha_{2} \ =&   \gamma \ + \ \nu_{2}\\
		\beta_{1} \  =&  -\gamma \ - \  \nu_{3} \\
		\beta_{2} \ =&  \delta \ - \ \gamma,
	\end{cases}
\end{align*}
where $\sum \nu_{j}=0$.  We solve for $\gamma$ using the first three equations: 	$\gamma =   (\alpha_{1}  +  \alpha_{2}  -  \beta_{1} )/3$. It follows that
\begin{align}\label{linchanVar}
	\begin{cases}
		\nu_{1} \ =&     (2\alpha_{1}  -  \alpha_{2}  +  \beta_{1})/3\\
		\nu_{2} \ =&    (2 \alpha_{2} -\alpha_{1}  +  \beta_{1})/3\\
		\nu_{3} \ =& -(\alpha_{1}+\alpha_{2}+2\beta_{1})/3 \\
		3\delta \  =&  3\beta_{2}  +   	\alpha_{1}  +  \alpha_{2}  -  \beta_{1}.  
	\end{cases}
\end{align}
As a result, the moment (\ref{duomo}) can be expressed as
\begin{align}\label{dualmo4th}
	\int_{\R} \,  \zeta\Big( \frac{1}{2}+ \alpha_{1}+it\Big)  &\zeta\Big( \frac{1}{2}+ \alpha_{2}+it\Big) \zeta\Big( \frac{1}{2}+ \beta_{1}-it\Big) \zeta\Big( \frac{1}{2}+ \beta_{2}-it\Big) 	\eta_{\alpha_{1}, \alpha_{2}; \beta_{1}, \beta_{2}}(t)  \, dt,
\end{align}
where the weight function is given by
\begin{align*}
	\eta_{\alpha_{1}, \alpha_{2}; \beta_{1}, \beta_{2}}(t) \ := \ 	\frac{1}{2\pi} \  \frac{\Gamma_{\R}(1/2+\beta_{1}-it)}{\Gamma_{\R}(1/2-\beta_{1}+it)} \ \left(\mathcal{F}_{\nu}H\right)\Big(\frac{1}{2}+\frac{\alpha_{1}  +  \alpha_{2}  -  \beta_{1} }{3}+it,\   \frac{1}{2}+ \frac{\alpha_{1}+ \alpha_{2}-\beta_{1}+3\beta_{2}}{6}\Big). 
\end{align*}

We write  $ \mathcal{R}_{\pm \alpha}(s; H) = \mathcal{R}_{\pm\nu}(s; H)$ (see (\ref{4thmoMT2})). Using the re-labeling (\ref{linchanVar}) and repeated application of the functional equation of $\Lambda(s)$, it follows that $(1/2)\,\mathcal{R}_{\nu}(1-s; H)+ (1/2) \, \mathcal{R}_{-\nu}(s; H)$ is given by
\begin{align}
  & H\Big(\frac{1+\alpha_{1}  -  \alpha_{2}  +  \beta_{1} -\beta_{2}}{2}\Big)  \frac{\Lambda(1+\alpha_{1}-\alpha_{2})\Lambda(1+\beta_{1}-\beta_{2})\Lambda(1+\alpha_{1}+\beta_{1}) \Lambda(1-\alpha_{2}-\beta_{2})}{\Lambda(2+\alpha_{1}-\alpha_{2}+\beta_{1}-\beta_{2})}  \nonumber\\
	 \ + \  &H\Big(\frac{1-\alpha_{1}+\alpha_{2}+\beta_{1}-\beta_{2}}{2}\Big)  \frac{\Lambda(1-\alpha_{1}+\alpha_{2}) \Lambda\left(1+\beta_{1}-\beta_{2}\right)\Lambda(1+\alpha_{2}+\beta_{1}) \Lambda(1-\alpha_{1}-\beta_{2}) }{\Lambda(2-\alpha_{1}+\alpha_{2}+\beta_{1}-\beta_{2})} \nonumber\\
	\ + \  &H\Big(\frac{1-\alpha_{1}-\alpha_{2}-\beta_{1}-\beta_{2}}{2}\Big) \frac{\Lambda(1-\alpha_{1}-\beta_{1})\Lambda(1-\alpha_{2}-\beta_{2}) \Lambda(1-\alpha_{2}-\beta_{1})\Lambda(1-\alpha_{1}-\beta_{2})}{\Lambda(2-\alpha_{1}-\alpha_{2}-\beta_{1}-\beta_{2})}, \nonumber\\
+ &H\Big(\frac{1-\alpha_{1}+\alpha_{2}-\beta_{1}+\beta_{2}}{2}\Big) \frac{\Lambda(1-\alpha_{1}-\beta_{1}) \Lambda(1-\alpha_{1}+\alpha_{2})\Lambda(1+\alpha_{2}+\beta_{2})\Lambda(1-\beta_{1}+\beta_{2})}{\Lambda(2-\alpha_{1}+\alpha_{2}-\beta_{1}+\beta_{2})}  \nonumber\\
	  \ + \  &H\Big(\frac{1+\alpha_{1}-\alpha_{2}-\beta_{1}+\beta_{2}}{2}\Big) \frac{\Lambda(1-\alpha_{2}-\beta_{1})\Lambda(1+\alpha_{1}-\alpha_{2})\Lambda(1+\alpha_{1}+\beta_{2}) \Lambda(1-\beta_{1}+\beta_{2})}{\Lambda(2+\alpha_{1}-\alpha_{2}-\beta_{1}+\beta_{2})} \nonumber\\
	  \ + \ &H\Big(\frac{1+\alpha_{1}+\alpha_{2}+\beta_{1}+\beta_{2}}{2}\Big) \frac{\Lambda(1+\alpha_{2}+\beta_{1})\Lambda(1+\alpha_{1}+\beta_{1})\Lambda(1+\alpha_{1}+\beta_{2})\Lambda(1+\alpha_{2}+\beta_{2})}{\Lambda(2+\alpha_{1}+\alpha_{2}+\beta_{1}+\beta_{2})}. \nonumber
\end{align}
We now readily observe that the quotients and products of $\zeta$'s above match exactly with those in the CFKRS Conjecture (\ref{expl4thMT})! Moreover, the cubic moment dual to (\ref{dualmo4th}) takes the form:
	\begin{align*}
		\sum_{j} \ \frac{H(\mu_{j})}{\langle \, \phi_{j}, \phi_{j}\, \rangle} \, \Lambda\big( \frac{1-\alpha_{1}  +  \alpha_{2}  -  \beta_{1} +\beta_{2}}{2}, \, \phi_{j}\big) \Lambda\big( \frac{1-\alpha_{2}  +  \alpha_{1}  -  \beta_{1} +\beta_{2}}{2} , \,\phi_{j}\big) \Lambda\big(\frac{1+\alpha_{1}+\alpha_{2}+\beta_{1}+\beta_{2}}{2}, \,\phi_{j}\big)
	\end{align*}
plus the corresponding continuous contribution.


\subsection{Comments}\label{sect: comment}

In the literature, moments of $L$-functions are more commonly approached with the approximate functional equations. A clear advantage is that one can take $s=1/2$ right from the start. For example, tracing the arguments of \cite{Pe15} for the cubic moment of $\hbox{GL}(2)$ $L$-functions (readily adaptable to the Maass case and closely related to \cite{Y17}),  the main terms for the fourth moment of the $\hbox{GL}(1)$ $L$-functions \emph{do not appear}: most likely, they are small and absorbed into the error term of the approximate Motohashi-type formula obtained in \cite{Pe15}. In other words,  the ``approximate'' treatment precludes the possibility of spectral inversion by changing the test vectors (see the end of Section \ref{feaDIS}). This is not satisfactory, as one should be able to pass between the two different-looking moments in a Motohashi-type formula for distinct applications, much as with the celebrated Kuznetsov formulae. 

In Section \ref{mainres}, we compare the sources of the main terms in the fourth moment with earlier methods.  For the cubic moment,  previous works of  \cite{Iv02} and \cite{Fr20} rely on the shifted divisor sums to extract the main terms. The work \cite{Fr20} (in the weight aspect) is closer in spirit to \cite{CFK+05}, which begins with an \textit{exact} identity for the twisted second moment of $\hbox{GL}(2)$ $L$-functions, obtained quite non-trivially from the Petersson formula. The dual side of the moment identity comprises three pieces of shifted divisor sums. In total, $12$ different residues arise from these sums.  As in \cite{HY10, Y11}, delicate combinations of terms are required, and  \cite{Fr20} carries this out. Interestingly, some combinations contribute to main terms, some to error terms, and some to a mixture of both! This appears to be the primary reason \cite{Fr20} arrives at an \emph{approximate} cubic moment identity, despite starting with an exact moment identity. The approaches of \cite{Mo93, BM05, Ne20+, Wu22, BFW21+} and the present work do not encounter this issue. This phenomenon deserves further investigation.


 \section{Acknowledgment}
 The author is grateful to the referee(s) for their thoughtful and valuable comments on the manuscript, which have led to substantial improvements in the exposition of the paper. Part of this work was completed during the author's visits to the Chinese University of Hong Kong and Queen's University, whose generous hospitality is warmly acknowledged.


\printbibliography

@book {Ba64,
    AUTHOR = {Bailey, W. N.},
     TITLE = {Generalized hypergeometric series},
    SERIES = {Cambridge Tracts in Mathematics and Mathematical Physics},
    VOLUME = {No. 32},
 PUBLISHER = {Stechert-Hafner, Inc., New York},
      YEAR = {1964},
     PAGES = {v+108},
   MRCLASS = {33.20 (40.00)},
  MRNUMBER = {185155},
SHORTHAND={Bai64},
}

@article {BBFR20,
    AUTHOR = {Balkanova, O. and Bhowmik, G. and Frolenkov, D. and Raulf, N.},
     TITLE = {Mixed moment of {$GL(2)$} and {$GL(3)$} {$L$}-functions},
   JOURNAL = {Proc. Lond. Math. Soc. (3)},
  FJOURNAL = {Proceedings of the London Mathematical Society. Third Series},
    VOLUME = {121},
      YEAR = {2020},
    NUMBER = {2},
     PAGES = {177--219},
   MRCLASS = {11F66 (11F37 33C20)},
  MRNUMBER = {4060319},
MRREVIEWER = {Jack\ Buttcane},
       DOI = {10.1112/plms.12312},
SHORTHAND={BBF+20}
}

@misc{BFW21+,
      title={On Weyl's Subconvex Bound for Cube-Free Hecke characters: Totally Real Case}, 
      author={Olga Balkanova and Dmitry Frolenkov and Han Wu},
      year={2024},
      eprint={2108.12283},
      archivePrefix={arXiv},
SHORTHAND={BFW24},
}

@article {BHKM20,
    AUTHOR = {Blomer, Valentin and Humphries, Peter and Khan, Rizwanur and
              Milinovich, Micah B.},
     TITLE = {Motohashi's fourth moment identity for non-archimedean test
              functions and applications},
   JOURNAL = {Compos. Math.},
  FJOURNAL = {Compositio Mathematica},
    VOLUME = {156},
      YEAR = {2020},
    NUMBER = {5},
     PAGES = {1004--1038},
   MRCLASS = {11M41 (11F66 11F72)},
  MRNUMBER = {4089375},
MRREVIEWER = {Dimitrios\ Chatzakos},
       DOI = {10.1112/s0010437x20007101},
SHORTHAND={BHK+20},
}

@article {Bi22+,
    AUTHOR = {Bir\'o, Andr\'as},
     TITLE = {Triple product integralsand {R}ankin-{S}elberg
              {$L$}-functions},
   JOURNAL = {Funct. Approx. Comment. Math.},
  FJOURNAL = {Uniwersytet im. Adama Mickiewicza w Poznaniu. Wydzia\l\
              Matematyki i Informatyki. Functiones et Approximatio
              Commentarii Mathematici},
    VOLUME = {72},
      YEAR = {2025},
    NUMBER = {2},
     PAGES = {225--275},
   MRCLASS = {11F66 (11F37)},
  MRNUMBER = {4919934},
       DOI = {10.7169/facm/241017-11-11},
SHORTHAND={Bir25},
}

@article {Bl12a,
    AUTHOR = {Blomer, Valentin},
     TITLE = {Period integrals and {R}ankin-{S}elberg {$L$}-functions on
              {$GL(n)$}},
   JOURNAL = {Geom. Funct. Anal.},
  FJOURNAL = {Geometric and Functional Analysis},
    VOLUME = {22},
      YEAR = {2012},
    NUMBER = {3},
     PAGES = {608--620},
   MRCLASS = {11F66 (11F55 11F67)},
  MRNUMBER = {2972602},
MRREVIEWER = {Andre\ Reznikov},
       DOI = {10.1007/s00039-012-0166-7},
SHORTHAND={Blo12a},}

@article {Bl12b,
    AUTHOR = {Blomer, Valentin},
     TITLE = {Subconvexity for twisted {$L$}-functions on {${\rm GL}(3)$}},
   JOURNAL = {Amer. J. Math.},
  FJOURNAL = {American Journal of Mathematics},
    VOLUME = {134},
      YEAR = {2012},
    NUMBER = {5},
     PAGES = {1385--1421},
   MRCLASS = {11F67 (11F70)},
  MRNUMBER = {2975240},
MRREVIEWER = {Jannis\ A.\ Antoniadis},
       DOI = {10.1353/ajm.2012.0032},
SHORTHAND={Blo12b},
}

@article {BM05,
    AUTHOR = {Bruggeman, Roelof W. and Motohashi, Yoichi},
     TITLE = {A new approach to the spectral theory of the fourth moment of
              the {R}iemann zeta-function},
   JOURNAL = {J. Reine Angew. Math.},
  FJOURNAL = {Journal f\"ur die Reine und Angewandte Mathematik. [Crelle's
              Journal]},
    VOLUME = {579},
      YEAR = {2005},
     PAGES = {75--114},
   MRCLASS = {11M06 (11F72 11M36)},
  MRNUMBER = {2124019},
MRREVIEWER = {Tsz\ Ho\ Chan},
       DOI = {10.1515/crll.2005.2005.579.75},
SHORTHAND={BM05},
}

@article {BTB22+,
    AUTHOR = {Baluyot, Siegfred and Turnage-Butterbaugh, Caroline L.},
     TITLE = {A mean value theorem for {D}irichlet polynomials associated
              with primitive {D}irichlet {$L$}-functions},
   JOURNAL = {Int. Math. Res. Not. IMRN},
  FJOURNAL = {International Mathematics Research Notices. IMRN},
      YEAR = {2025},
    NUMBER = {3},
     PAGES = {Paper No. rnaf010, 39},
   MRCLASS = {11M06},
  MRNUMBER = {4860963},
MRREVIEWER = {Alessandro\ Languasco},
       DOI = {10.1093/imrn/rnaf010},
SHORTHAND={BTB25},
}

@article {Bu18,
    AUTHOR = {Buttcane, Jack},
     TITLE = {Higher weight on {${\rm GL}(3)$}. {I}: {T}he {E}isenstein
              series},
   JOURNAL = {Forum Math.},
  FJOURNAL = {Forum Mathematicum},
    VOLUME = {30},
      YEAR = {2018},
    NUMBER = {3},
     PAGES = {681--722},
   MRCLASS = {11F72 (11F30)},
  MRNUMBER = {3794906},
MRREVIEWER = {Yingnan\ Wang},
       DOI = {10.1515/forum-2017-0060},
SHORTHAND={But18},
}

@article {Bu20,
    AUTHOR = {Buttcane, Jack},
     TITLE = {Kuznetsov, {P}etersson and {W}eyl on {$\rm GL(3)$}, {I}: {T}he
              principal series forms},
   JOURNAL = {Amer. J. Math.},
  FJOURNAL = {American Journal of Mathematics},
    VOLUME = {142},
      YEAR = {2020},
    NUMBER = {2},
     PAGES = {595--626},
   MRCLASS = {11F72 (11F55 33E30)},
  MRNUMBER = {4084164},
MRREVIEWER = {Ramdin\ Mawia},
       DOI = {10.1353/ajm.2020.0016},
SHORTHAND = {But20},
}

@book {Bump84,
    AUTHOR = {Bump, Daniel},
     TITLE = {Automorphic forms on {${\rm GL}(3,{\bf R})$}},
    SERIES = {Lecture Notes in Mathematics},
    VOLUME = {1083},
 PUBLISHER = {Springer-Verlag, Berlin},
      YEAR = {1984},
     PAGES = {xi+184},
      ISBN = {3-540-13864-1},
   MRCLASS = {11F55 (11F70)},
  MRNUMBER = {765698},
MRREVIEWER = {Stephen\ Gelbart},
       DOI = {10.1007/BFb0100147},
SHORTHAND={Bum84}, }

@article {Bump88,
    AUTHOR = {Bump, Daniel},
     TITLE = {Barnes' second lemma and its application to {R}ankin-{S}elberg
              convolutions},
   JOURNAL = {Amer. J. Math.},
  FJOURNAL = {American Journal of Mathematics},
    VOLUME = {110},
      YEAR = {1988},
    NUMBER = {1},
     PAGES = {179--185},
   MRCLASS = {11F67 (11F70 11F99 22E50 33A15)},
  MRNUMBER = {926743},
MRREVIEWER = {David\ Soudry},
       DOI = {10.2307/2374544},
SHORTHAND={Bum88},
}

@incollection {C96,
    AUTHOR = {Conrey, J. B.},
     TITLE = {A note on the fourth power moment of the {R}iemann
              zeta-function},
 BOOKTITLE = {Analytic number theory, {V}ol. 1 ({A}llerton {P}ark, {IL},
              1995)},
    SERIES = {Progr. Math.},
    VOLUME = {138},
     PAGES = {225--230},
 PUBLISHER = {Birkh\"auser Boston, Boston, MA},
      YEAR = {1996},
      ISBN = {0-8176-3824-5},
   MRCLASS = {11M06},
  MRNUMBER = {1399340},
MRREVIEWER = {Andrew\ Granville},
SHORTHAND={Con96},
}

@article {CF22+,
    AUTHOR = {Conrey, Brian and Fazzari, Alessandro},
     TITLE = {Averages of long {D}irichlet polynomials with modular
              coefficients},
   JOURNAL = {Mathematika},
  FJOURNAL = {Mathematika. A Journal of Pure and Applied Mathematics},
    VOLUME = {69},
      YEAR = {2023},
    NUMBER = {4},
     PAGES = {1060--1080},
MRCLASS = {11F66 (11F11)},
  MRNUMBER = {4627908},
MRREVIEWER = {Guodong\ Hua},
       DOI = {10.1112/mtk.12220},
SHORTHAND={CF23},
}

@article {CFK+05,
    AUTHOR = {Conrey, J. B. and Farmer, D. W. and Keating, J. P. and
              Rubinstein, M. O. and Snaith, N. C.},
     TITLE = {Integral moments of {$L$}-functions},
   JOURNAL = {Proc. London Math. Soc. (3)},
  FJOURNAL = {Proceedings of the London Mathematical Society. Third Series},
    VOLUME = {91},
      YEAR = {2005},
    NUMBER = {1},
     PAGES = {33--104},
   MRCLASS = {11M26},
  MRNUMBER = {2149530},
MRREVIEWER = {K.\ Soundararajan},
       DOI = {10.1112/S0024611504015175},
SHORTHAND={CFK+05},
}

@article {CI00,
    AUTHOR = {Conrey, J. B. and Iwaniec, H.},
     TITLE = {The cubic moment of central values of automorphic
              {$L$}-functions},
   JOURNAL = {Ann. of Math. (2)},
  FJOURNAL = {Annals of Mathematics. Second Series},
    VOLUME = {151},
      YEAR = {2000},
    NUMBER = {3},
     PAGES = {1175--1216},
   MRCLASS = {11F66 (11F25 11L05 11M06)},
  MRNUMBER = {1779567},
MRREVIEWER = {John\ B.\ Friedlander},
       DOI = {10.2307/121132},
SHORTHAND={CI00},
}

@article {CIS12,
    AUTHOR = {Conrey, J. B. and Iwaniec, H. and Soundararajan, K.},
     TITLE = {The sixth power moment of {D}irichlet {$L$}-functions},
   JOURNAL = {Geom. Funct. Anal.},
  FJOURNAL = {Geometric and Functional Analysis},
    VOLUME = {22},
      YEAR = {2012},
    NUMBER = {5},
     PAGES = {1257--1288},
   MRCLASS = {11M06},
  MRNUMBER = {2989433},
MRREVIEWER = {Steven\ Joel\ Miller},
       DOI = {10.1007/s00039-012-0191-6},
SHORTHAND={CIS12},
}

@book {CPS90,
    AUTHOR = {Cogdell, James W. and Piatetski-Shapiro, Ilya},
     TITLE = {The arithmetic and spectral analysis of {P}oincar\'e{} series},
    SERIES = {Perspectives in Mathematics},
    VOLUME = {13},
 PUBLISHER = {Academic Press, Inc., Boston, MA},
      YEAR = {1990},
 ISBN = {0-12-178590-4},
     PAGES = {vi+182},
   MRCLASS = {11F72 (11F30 11F70)},
  MRNUMBER = {1063847},
MRREVIEWER = {David\ Joyner},
SHORTHAND={CPS90},
}

@article {DFI94,
    AUTHOR = {Duke, W. and Friedlander, J. B. and Iwaniec, H.},
     TITLE = {A quadratic divisor problem},
   JOURNAL = {Invent. Math.},
  FJOURNAL = {Inventiones Mathematicae},
    VOLUME = {115},
      YEAR = {1994},
    NUMBER = {2},
     PAGES = {209--217},
   MRCLASS = {11N37 (11L07)},
  MRNUMBER = {1258903},
MRREVIEWER = {James\ Lee\ Hafner},
       DOI = {10.1007/BF01231758},
SHORTHAND = {DFI94}, 
      
}

@article {DGH03,
    AUTHOR = {Diaconu, Adrian and Goldfeld, Dorian and Hoffstein, Jeffrey},
     TITLE = {Multiple {D}irichlet series and moments of zeta and
              {$L$}-functions},
   JOURNAL = {Compositio Math.},
  FJOURNAL = {Compositio Mathematica},
    VOLUME = {139},
      YEAR = {2003},
    NUMBER = {3},
     PAGES = {297--360},
   MRCLASS = {11M06 (11F66 11M41)},
  MRNUMBER = {2041614},
MRREVIEWER = {Emmanuel\ P.\ Royer},
       DOI = {10.1023/B:COMP.0000018137.38458.68},
SHORTHAND={DGH03},
}

@article {Fr20,
    AUTHOR = {Frolenkov, Dmitry},
     TITLE = {The cubic moment of automorphic {$L$}-functions in the weight
              aspect},
   JOURNAL = {J. Number Theory},
  FJOURNAL = {Journal of Number Theory},
    VOLUME = {207},
      YEAR = {2020},
     PAGES = {247--281},
   MRCLASS = {11F11 (11F66 33C20)},
  MRNUMBER = {4017946},
MRREVIEWER = {Thomas\ Oliver},
       DOI = {10.1016/j.jnt.2019.07.009},
SHORTHAND={Fro20},
}

@book {Gold,
    AUTHOR = {Goldfeld, Dorian},
     TITLE = {Automorphic forms and {$L$}-functions for the group {${\rm
              GL}(n,\bold R)$}},
    SERIES = {Cambridge Studies in Advanced Mathematics},
    VOLUME = {99},
      NOTE = {With an appendix by Kevin A. Broughan},
 PUBLISHER = {Cambridge University Press, Cambridge},
      YEAR = {2006},
     PAGES = {xiv+493},
      ISBN = {978-0-521-83771-2; 0-521-83771-5},
   MRCLASS = {11F55 (11F66 11F70 11R39)},
  MRNUMBER = {2254662},
MRREVIEWER = {Emmanuel\ P.\ Royer},
       DOI = {10.1017/CBO9780511542923},
SHORTHAND={Gol06},
}

@article {HK22+,
    AUTHOR = {Humphries, Peter and Khan, Rizwanur},
     TITLE = {{$L^p$}-norm bounds for automorphic forms via spectral
              reciprocity},
   JOURNAL = {Proc. Lond. Math. Soc. (3)},
  FJOURNAL = {Proceedings of the London Mathematical Society. Third Series},
    VOLUME = {130},
      YEAR = {2025},
    NUMBER = {6},
     PAGES = {Paper No. e70061},
   MRCLASS = {11F72 (11F12 11F66 58 81)},
  MRNUMBER = {4923689},
       DOI = {10.1112/plms.70061},
SHORTHAND={HK25},
}

@article {HN22,
    AUTHOR = {Hamieh, Alia and Ng, Nathan},
     TITLE = {Mean values of long {D}irichlet polynomials with higher
              divisor coefficients},
   JOURNAL = {Adv. Math.},
  FJOURNAL = {Advances in Mathematics},
    VOLUME = {410},
      YEAR = {2022},
     PAGES = {Paper No. 108759, 61},
   MRCLASS = {11M06 (11M26 11M41 11N37 11N75)},
  MRNUMBER = {4505393},
MRREVIEWER = {Timothy\ S.\ Trudgian},
       DOI = {10.1016/j.aim.2022.108759},
SHORTHAND= {HN22},
}

@article {HB79,
    AUTHOR = {Heath-Brown, D. R.},
     TITLE = {The fourth power moment of the {R}iemann zeta function},
   JOURNAL = {Proc. London Math. Soc. (3)},
  FJOURNAL = {Proceedings of the London Mathematical Society. Third Series},
    VOLUME = {38},
      YEAR = {1979},
    NUMBER = {3},
     PAGES = {385--422},
   MRCLASS = {10H05},
  MRNUMBER = {532980},
MRREVIEWER = {H.\ M.\ Stark},
       DOI = {10.1112/plms/s3-38.3.385},
SHORTHAND={HB79}, 
}

@article {HY10,
    AUTHOR = {Hughes, C. P. and Young, Matthew P.},
     TITLE = {The twisted fourth moment of the {R}iemann zeta function},
   JOURNAL = {J. Reine Angew. Math.},
  FJOURNAL = {Journal f\"ur die Reine und Angewandte Mathematik. [Crelle's
              Journal]},
    VOLUME = {641},
      YEAR = {2010},
     PAGES = {203--236},
   MRCLASS = {11M06},
  MRNUMBER = {2643931},
MRREVIEWER = {Arnaud\ Chadozeau},
       DOI = {10.1515/CRELLE.2010.034},
SHORTHAND={HY10},
}

@article {IS07,
    AUTHOR = {Ishii, Taku and Stade, Eric},
     TITLE = {New formulas for {W}hittaker functions on {${\rm GL}(n,\Bbb
              R)$}},
   JOURNAL = {J. Funct. Anal.},
  FJOURNAL = {Journal of Functional Analysis},
    VOLUME = {244},
      YEAR = {2007},
    NUMBER = {1},
     PAGES = {289--314},
   MRCLASS = {33C80 (11F70 22E45 43A80)},
  MRNUMBER = {2294485},
MRREVIEWER = {Angela\ A.\ Pasquale},
       DOI = {10.1016/j.jfa.2006.12.004},
SHORTHAND  = {IS07},
}

@article {Iv01,
    AUTHOR = {Ivi\'c, Aleksandar},
     TITLE = {On sums of {H}ecke series in short intervals},
   JOURNAL = {J. Th\'eor. Nombres Bordeaux},
  FJOURNAL = {Journal de Th\'eorie des Nombres de Bordeaux},
    VOLUME = {13},
      YEAR = {2001},
    NUMBER = {2},
     PAGES = {453--468},
   MRCLASS = {11F72 (11F30)},
  MRNUMBER = {1879668},
MRREVIEWER = {S.\ W.\ Graham},
       DOI = {10.5802/jtnb.333},
SHORTHAND={Ivi01},
}

@article {Iv02,
    AUTHOR = {Ivi\'c, Aleksandar},
     TITLE = {On the moments of {H}ecke series at central points},
   JOURNAL = {Funct. Approx. Comment. Math.},
  FJOURNAL = {Uniwersytet im. Adama Mickiewicza w Poznaniu. Wydzia\l\
              Matematyki i Informatyki. Functiones et Approximatio
              Commentarii Mathematici},
    VOLUME = {30},
      YEAR = {2002},
     PAGES = {49--82},
   MRCLASS = {11F66 (11F72 11M41)},
  MRNUMBER = {2136511},
MRREVIEWER = {Emmanuel\ P.\ Royer},
       DOI = {10.7169/facm/1538186661},
SHORTHAND={Ivi02},
}

@article {IY15,
    AUTHOR = {Ichino, Atsushi and Yamana, Shunsuke},
     TITLE = {Periods of automorphic forms: the case of
              {$(\text{GL}_{n+1}\times\text{GL}_n,\text{GL}_n)$}},
   JOURNAL = {Compos. Math.},
  FJOURNAL = {Compositio Mathematica},
    VOLUME = {151},
      YEAR = {2015},
    NUMBER = {4},
     PAGES = {665--712},
   MRCLASS = {11F67 (11F70)},
  MRNUMBER = {3334892},
MRREVIEWER = {Shuichiro\ Takeda},
       DOI = {10.1112/S0010437X14007362},
SHORTHAND  = {IY15}, 
}

@misc{JN21+,
      title={Spectral reciprocity for $\mathrm{GL}(n)$ and simultaneous non-vanishing of central $L$-values}, 
      author={Subhajit Jana and Ramon Nunes},
      year={2025},
      eprint={2111.02297},
      archivePrefix={arXiv},
note = {To appear in \textit{Amer. J. Math.}},
SHORTHAND={JN25},
}

@article {KPY19,
    AUTHOR = {Kiral, Eren Mehmet and Petrow, Ian and Young, Matthew P.},
     TITLE = {Oscillatory integrals with uniformity in parameters},
   JOURNAL = {J. Th\'eor. Nombres Bordeaux},
  FJOURNAL = {Journal de Th\'eorie des Nombres de Bordeaux},
    VOLUME = {31},
      YEAR = {2019},
    NUMBER = {1},
     PAGES = {145--159},
   MRCLASS = {41A60 (42A38)},
  MRNUMBER = {3994723},
MRREVIEWER = {Abdallah\ Benaissa},
       DOI = {10.5802/jtnb.1072},
SHORTHAND={KPY19},}

@article {Kw23,
    AUTHOR = {Kwan, Chung-Hang},
     TITLE = {Spectral moment formulae for {${\rm GL}(3) \times{\rm GL}(2)$}
              {$L$}-functions {I}: {T}he cuspidal case},
   JOURNAL = {Algebra Number Theory},
  FJOURNAL = {Algebra \& Number Theory},
    VOLUME = {18},
      YEAR = {2024},
    NUMBER = {10},
     PAGES = {1817--1862},
   MRCLASS = {11F55 (11F66 11F72)},
  MRNUMBER = {4810073},
       DOI = {10.2140/ant.2024.18.1817},
SHORTHAND={Kwa24},
}

@article {Kw25,
    AUTHOR = {Kwan, Chung-Hang},
     TITLE = {Spectral moment formulae for {$GL(3)\times GL(2)$}
              {$L$}-functions {III}: the twisted case},
   JOURNAL = {Math. Ann.},
  FJOURNAL = {Mathematische Annalen},
    VOLUME = {391},
      YEAR = {2025},
    NUMBER = {1},
     PAGES = {363--398},
   MRCLASS = {11F55 (11F72)},
  MRNUMBER = {4846786},
       DOI = {10.1007/s00208-024-02914-9},
SHORTHAND={Kwa25},
}

@article {Li09,
    AUTHOR = {Li, Xiaoqing},
     TITLE = {The central value of the {R}ankin-{S}elberg {$L$}-functions},
   JOURNAL = {Geom. Funct. Anal.},
  FJOURNAL = {Geometric and Functional Analysis},
    VOLUME = {18},
      YEAR = {2009},
    NUMBER = {5},
     PAGES = {1660--1695},
   MRCLASS = {11F67 (11F41)},
  MRNUMBER = {2481739},
MRREVIEWER = {Nicolas\ P.\ Templier},
       DOI = {10.1007/s00039-008-0692-5},
SHORTHAND={Li09},
}

@article {Li11,
    AUTHOR = {Li, Xiaoqing},
     TITLE = {Bounds for {${\rm GL}(3)\times {\rm GL}(2)$} {$L$}-functions
              and {${\rm GL}(3)$} {$L$}-functions},
   JOURNAL = {Ann. of Math. (2)},
  FJOURNAL = {Annals of Mathematics. Second Series},
    VOLUME = {173},
      YEAR = {2011},
    NUMBER = {1},
     PAGES = {301--336},
   MRCLASS = {11F67 (11F37 11F66)},
  MRNUMBER = {2753605},
MRREVIEWER = {Wen-Wei\ Li},
       DOI = {10.4007/annals.2011.173.1.8},
SHORTHAND = {Li11},
}

@incollection {Li14,
    AUTHOR = {Li, Xiaoqing},
     TITLE = {The {V}oronoi formula for the triple divisor function},
 BOOKTITLE = {Automorphic forms and {$L$}-functions},
    SERIES = {Adv. Lect. Math. (ALM)},
    VOLUME = {30},
     PAGES = {69--90},
 PUBLISHER = {Int. Press, Somerville, MA},
      YEAR = {2014},
   MRCLASS = {11F11 (11F66)},
  MRNUMBER = {3307913},
MRREVIEWER = {Valentin\ Blomer},
SHORTHAND={Li14},
}

@misc{Mi21+,
      title={Spectral Reciprocity for the product of Rankin-Selberg $L$-functions}, 
      author={Xinchen Miao},
      year={2021},
      eprint={2110.11529},
      archivePrefix={arXiv},
SHORTHAND={Mia21},
}

@article {Mo92,
    AUTHOR = {Motohashi, Y\B oichi},
     TITLE = {Spectral mean values of {M}aass waveform {$L$}-functions},
   JOURNAL = {J. Number Theory},
  FJOURNAL = {Journal of Number Theory},
    VOLUME = {42},
      YEAR = {1992},
    NUMBER = {3},
     PAGES = {258--284},
   MRCLASS = {11F66 (11F72 11M41)},
  MRNUMBER = {1189505},
MRREVIEWER = {Matti\ Jutila},
       DOI = {10.1016/0022-314X(92)90092-4},
SHORTHAND  = {Mot92},
}

@article {Mo93,
    AUTHOR = {Motohashi, Y\B oichi},
     TITLE = {An explicit formula for the fourth power mean of the {R}iemann
              zeta-function},
   JOURNAL = {Acta Math.},
  FJOURNAL = {Acta Mathematica},
    VOLUME = {170},
      YEAR = {1993},
    NUMBER = {2},
     PAGES = {181--220},
   MRCLASS = {11M06 (11F72)},
  MRNUMBER = {1226527},
MRREVIEWER = {D.\ R.\ Heath-Brown},
       DOI = {10.1007/BF02392785},
SHORTHAND={Mot93},
}

@book {Mo97,
    AUTHOR = {Motohashi, Yoichi},
     TITLE = {Spectral theory of the {R}iemann zeta-function},
    SERIES = {Cambridge Tracts in Mathematics},
    VOLUME = {127},
 PUBLISHER = {Cambridge University Press, Cambridge},
      YEAR = {1997},
     PAGES = {x+228},
      ISBN = {0-521-44520-5},
   MRCLASS = {11M06 (11F72)},
  MRNUMBER = {1489236},
MRREVIEWER = {Don\ Redmond},
       DOI = {10.1017/CBO9780511983399},
SHORTHAND = {Mot97}, 
}

@article {Mot99,
    AUTHOR = {Motohashi, Yoichi},
     TITLE = {A note on the mean value of the zeta and {$L$}-functions.
              {IX}},
   JOURNAL = {Proc. Japan Acad. Ser. A Math. Sci.},
  FJOURNAL = {Japan Academy. Proceedings. Series A. Mathematical Sciences},
    VOLUME = {75},
      YEAR = {1999},
    NUMBER = {8},
     PAGES = {147--149},
   MRCLASS = {11M06 (11F72 11M36)},
  MRNUMBER = {1730022},
MRREVIEWER = {Matti\ Jutila},
SHORTHAND = {Mot99},
}

@incollection {MV06,
    AUTHOR = {Michel, Philippe and Venkatesh, Akshay},
     TITLE = {Equidistribution, {$L$}-functions and ergodic theory: on some
              problems of {Y}u.\ {L}innik},
 BOOKTITLE = {International {C}ongress of {M}athematicians. {V}ol. {II}},
     PAGES = {421--457},
 PUBLISHER = {Eur. Math. Soc., Z\"urich},
      YEAR = {2006},
   MRCLASS = {11F66 (11F67 11M41 37A45)},
  MRNUMBER = {2275604},
MRREVIEWER = {Gergely\ Harcos},
SHORTHAND={MV06},
}

@article {MV10,
    AUTHOR = {Michel, Philippe and Venkatesh, Akshay},
     TITLE = {The subconvexity problem for {${\rm GL}_2$}},
   JOURNAL = {Publ. Math. Inst. Hautes \'Etudes Sci.},
  FJOURNAL = {Publications Math\'ematiques. Institut de Hautes \'Etudes
              Scientifiques},
    NUMBER = {111},
      YEAR = {2010},
     PAGES = {171--271},
   MRCLASS = {11F67 (11F37 11F70 22E55 58J51)},
  MRNUMBER = {2653249},
MRREVIEWER = {Andre\ Reznikov},
       DOI = {10.1007/s10240-010-0025-8},
SHORTHAND={MV10},
}

@misc{Ne20+,
      title={Eisenstein series and the cubic moment for PGL(2)}, 
      author={Paul D. Nelson},
      year={2020},
      eprint={1911.06310},
      archivePrefix={arXiv},
SHORTHAND={Nel20},
}

@article {Nu20+,
    AUTHOR = {Nunes, Ramon M.},
     TITLE = {Spectral reciprocity via integral representations},
   JOURNAL = {Algebra Number Theory},
  FJOURNAL = {Algebra \& Number Theory},
    VOLUME = {17},
      YEAR = {2023},
    NUMBER = {8},
     PAGES = {1381--1409},
   MRCLASS = {11F03 (11F66 11F70)},
  MRNUMBER = {4633627},
MRREVIEWER = {Taiwang\ Deng},
       DOI = {10.2140/ant.2023.17.1381},
SHORTHAND={Nun23},
}

@article {Pe15,
    AUTHOR = {Petrow, Ian N.},
     TITLE = {A twisted {M}otohashi formula and {W}eyl-subconvexity for
              {$L$}-functions of weight two cusp forms},
   JOURNAL = {Math. Ann.},
  FJOURNAL = {Mathematische Annalen},
    VOLUME = {363},
      YEAR = {2015},
    NUMBER = {1-2},
     PAGES = {175--216},
   MRCLASS = {11F30 (11F37 11F66 11M06)},
  MRNUMBER = {3394377},
MRREVIEWER = {Guangshi\ L\"u},
       DOI = {10.1007/s00208-014-1166-8},
SHORTHAND={Pet15},
}

@article {PY19,
    AUTHOR = {Petrow, Ian and Young, Matthew P.},
     TITLE = {A generalized cubic moment and the {P}etersson formula for
              newforms},
   JOURNAL = {Math. Ann.},
  FJOURNAL = {Mathematische Annalen},
    VOLUME = {373},
      YEAR = {2019},
    NUMBER = {1-2},
     PAGES = {287--353},
   MRCLASS = {11F11 (11F37 11F66 11M06)},
  MRNUMBER = {3968874},
MRREVIEWER = {Thomas\ Oliver},
       DOI = {10.1007/s00208-018-1745-1},
SHORTHAND={PY19},
}

@article {PY20,
    AUTHOR = {Petrow, Ian and Young, Matthew P.},
     TITLE = {The {W}eyl bound for {D}irichlet {$L$}-functions of cube-free
              conductor},
   JOURNAL = {Ann. of Math. (2)},
  FJOURNAL = {Annals of Mathematics. Second Series},
    VOLUME = {192},
      YEAR = {2020},
    NUMBER = {2},
     PAGES = {437--486},
   MRCLASS = {11M06 (11F66)},
  MRNUMBER = {4151081},
MRREVIEWER = {Wen-Wei\ Li},
       DOI = {10.4007/annals.2020.192.2.3},
SHORTHAND={PY20},
}

@article {PY23,
    AUTHOR = {Petrow, Ian and Young, Matthew P.},
     TITLE = {The fourth moment of {D}irichlet {$L$}-functions along a coset
              and the {W}eyl bound},
   JOURNAL = {Duke Math. J.},
  FJOURNAL = {Duke Mathematical Journal},
    VOLUME = {172},
      YEAR = {2023},
    NUMBER = {10},
     PAGES = {1879--1960},
   MRCLASS = {11M06 (11F11 11F12 11F66 11L40)},
  MRNUMBER = {4624371},
MRREVIEWER = {Dimitrios\ Chatzakos},
       DOI = {10.1215/00127094-2022-0069},
SHORTHAND={PY23}, 
}

@article {R08,
    AUTHOR = {Reznikov, Andre},
     TITLE = {Rankin-{S}elberg without unfolding and bounds for spherical
              {F}ourier coefficients of {M}aass forms},
   JOURNAL = {J. Amer. Math. Soc.},
  FJOURNAL = {Journal of the American Mathematical Society},
    VOLUME = {21},
      YEAR = {2008},
    NUMBER = {2},
     PAGES = {439--477},
   MRCLASS = {11F67 (11F70 11M26 22E45)},
  MRNUMBER = {2373356},
MRREVIEWER = {Daniel\ Bump},
       DOI = {10.1090/S0894-0347-07-00581-4},
SHORTHAND={Rez08},
}

@article {St01,
    AUTHOR = {Stade, Eric},
     TITLE = {Mellin transforms of {${\rm GL}(n,\Bbb R)$} {W}hittaker
              functions},
   JOURNAL = {Amer. J. Math.},
  FJOURNAL = {American Journal of Mathematics},
    VOLUME = {123},
      YEAR = {2001},
    NUMBER = {1},
     PAGES = {121--161},
   MRCLASS = {22E30 (11F55 33C80)},
  MRNUMBER = {1827280},
MRREVIEWER = {Angela\ A.\ Pasquale},
SHORTHAND = {Sta01}
}

@article {Wu22,
    AUTHOR = {Wu, Han},
     TITLE = {On {M}otohashi's formula},
   JOURNAL = {Trans. Amer. Math. Soc.},
  FJOURNAL = {Transactions of the American Mathematical Society},
    VOLUME = {375},
      YEAR = {2022},
    NUMBER = {11},
     PAGES = {8033--8081},
   MRCLASS = {11F41 (11F72)},
  MRNUMBER = {4491445},
MRREVIEWER = {Peter\ Humphries},
       DOI = {10.1090/tran/8750},
SHORTHAND={Wu22},
       
}

@article {X11,
    AUTHOR = {Xu, Zhao},
     TITLE = {Simultaneous nonvanishing of automorphic {$L$}-functions at
              the central point},
   JOURNAL = {Manuscripta Math.},
  FJOURNAL = {Manuscripta Mathematica},
    VOLUME = {134},
      YEAR = {2011},
    NUMBER = {3-4},
     PAGES = {309--342},
   MRCLASS = {11F66 (11F67)},
  MRNUMBER = {2765714},
MRREVIEWER = {Dominic\ A.\ Lanphier},
       DOI = {10.1007/s00229-010-0396-7},
SHORTHAND = {Xu11},
}

@article {Y11,
    AUTHOR = {Young, Matthew P.},
     TITLE = {The fourth moment of {D}irichlet {$L$}-functions},
   JOURNAL = {Ann. of Math. (2)},
  FJOURNAL = {Annals of Mathematics. Second Series},
    VOLUME = {173},
      YEAR = {2011},
    NUMBER = {1},
     PAGES = {1--50},
   MRCLASS = {11M06 (11F72 11M50)},
  MRNUMBER = {2753598},
MRREVIEWER = {Arnaud\ Chadozeau},
       DOI = {10.4007/annals.2011.173.1.1},
SHORTHAND = {You11},  
}

@article {Y17,
    AUTHOR = {Young, Matthew P.},
     TITLE = {Weyl-type hybrid subconvexity bounds for twisted
              {$L$}-functions and {H}eegner points on shrinking sets},
   JOURNAL = {J. Eur. Math. Soc. (JEMS)},
  FJOURNAL = {Journal of the European Mathematical Society (JEMS)},
    VOLUME = {19},
      YEAR = {2017},
    NUMBER = {5},
     PAGES = {1545--1576},
   MRCLASS = {11M41 (11F67 11M06)},
  MRNUMBER = {3635360},
MRREVIEWER = {D.\ R.\ Heath-Brown},
       DOI = {10.4171/JEMS/699},
SHORTHAND={You17},
}

@article {Za21,
    AUTHOR = {Zacharias, Rapha\"el},
     TITLE = {Periods and reciprocity {I}},
   JOURNAL = {Int. Math. Res. Not. IMRN},
  FJOURNAL = {International Mathematics Research Notices. IMRN},
      YEAR = {2021},
    NUMBER = {3},
     PAGES = {2191--2209},
   MRCLASS = {11F67 (11F70)},
  MRNUMBER = {4206609},
MRREVIEWER = {Kalyan\ Chakraborty},
       DOI = {10.1093/imrn/rnz100},
SHORTHAND = {Zac21},
       
}

@misc{Za20+,
      title={Periods and Reciprocity II}, 
      author={Raphaël Zacharias},
      year={2020},
      eprint={1912.01512},
      archivePrefix={arXiv},
SHORTHAND = {Zac20},
}

@article {FM10,
    AUTHOR = {Folsom, Amanda and Masri, Riad},
     TITLE = {Equidistribution of {H}eegner points and the partition
              function},
   JOURNAL = {Math. Ann.},
  FJOURNAL = {Mathematische Annalen},
    VOLUME = {348},
      YEAR = {2010},
    NUMBER = {2},
     PAGES = {289--317},
   MRCLASS = {11P82 (11F37 11G18)},
  MRNUMBER = {2672303},
MRREVIEWER = {Henri\ Darmon},
       DOI = {10.1007/s00208-010-0478-6},
SHORTHAND={FM10},
}

@article {LMY13,
    AUTHOR = {Liu, Sheng-Chi and Masri, Riad and Young, Matthew P.},
     TITLE = {Subconvexity and equidistribution of {H}eegner points in the
              level aspect},
   JOURNAL = {Compos. Math.},
  FJOURNAL = {Compositio Mathematica},
    VOLUME = {149},
      YEAR = {2013},
    NUMBER = {7},
     PAGES = {1150--1174},
   MRCLASS = {11M41},
  MRNUMBER = {3078642},
MRREVIEWER = {Gergely\ Harcos},
       URL = {https://doi.org/10.1112/S0010437X13007033},
SHORTHAND={LMY13},
}

@article {DIT16,
    AUTHOR = {Duke, W. and Imamo\=glu, \"O. and T\'oth, \'A.},
     TITLE = {Geometric invariants for real quadratic fields},
   JOURNAL = {Ann. of Math. (2)},
  FJOURNAL = {Annals of Mathematics. Second Series},
    VOLUME = {184},
      YEAR = {2016},
    NUMBER = {3},
     PAGES = {949--990},
   MRCLASS = {11F11 (11R11 14H55)},
  MRNUMBER = {3549627},
MRREVIEWER = {Benjamin\ Linowitz},
       DOI = {10.4007/annals.2016.184.3.8},
SHORTHAND={DIT16},
}

@article {Ing27,
    AUTHOR = {Ingham, A. E.},
     TITLE = {Mean-{V}alue {T}heorems in the {T}heory of the {R}iemann
              {Z}eta-{F}unction},
   JOURNAL = {Proc. London Math. Soc. (2)},
  FJOURNAL = {Proceedings of the London Mathematical Society. Second Series},
    VOLUME = {27},
      YEAR = {1927},
    NUMBER = {4},
     PAGES = {273--300},
   MRCLASS = {99-04},
  MRNUMBER = {1575391},
       DOI = {10.1112/plms/s2-27.1.273},
SHORTHAND={Ing27},
}

@article {HL16,
    AUTHOR = {Hardy, G. H. and Littlewood, J. E.},
     TITLE = {Contributions to the theory of the riemann zeta-function and
              the theory of the distribution of primes},
   JOURNAL = {Acta Math.},
  FJOURNAL = {Acta Mathematica},
    VOLUME = {41},
      YEAR = {1916},
    NUMBER = {1},
     PAGES = {119--196},
   MRCLASS = {99-04},
  MRNUMBER = {1555148},
       DOI = {10.1007/BF02422942},
SHORTHAND={HL16},
}

@article {CK15a,
    AUTHOR = {Conrey, Brian and Keating, Jonathan P.},
     TITLE = {Moments of zeta and correlations of divisor-sums: {I}},
   JOURNAL = {Philos. Trans. Roy. Soc. A},
  FJOURNAL = {Philosophical Transactions of the Royal Society A.
              Mathematical, Physical and Engineering Sciences},
    VOLUME = {373},
      YEAR = {2015},
    NUMBER = {2040},
     PAGES = {20140313, 11},
   MRCLASS = {11M06 (11M26)},
  MRNUMBER = {3338122},
MRREVIEWER = {Timothy\ S.\ Trudgian},
       DOI = {10.1098/rsta.2014.0313},
SHORTHAND={CK15a},
}

@incollection {CK15b,
    AUTHOR = {Conrey, Brian and Keating, Jonathan P.},
     TITLE = {Moments of zeta and correlations of divisor-sums: {II}},
 BOOKTITLE = {Advances in the theory of numbers},
    SERIES = {Fields Inst. Commun.},
    VOLUME = {77},
     PAGES = {75--85},
 PUBLISHER = {Fields Inst. Res. Math. Sci., Toronto, ON},
      YEAR = {2015},
   MRCLASS = {11M06 (11M26)},
  MRNUMBER = {3409324},
MRREVIEWER = {Timothy\ S.\ Trudgian},
       DOI = {10.1007/978-1-4939-3201-6\_3},
SHORTHAND = {CK15b},
}

@article {CK19,
    AUTHOR = {Conrey, Brian and Keating, Jonathan P.},
     TITLE = {Moments of zeta and correlations of divisor-sums: {V}},
   JOURNAL = {Proc. Lond. Math. Soc. (3)},
  FJOURNAL = {Proceedings of the London Mathematical Society. Third Series},
    VOLUME = {118},
      YEAR = {2019},
    NUMBER = {4},
     PAGES = {729--752},
   MRCLASS = {11M06},
  MRNUMBER = {3938710},
MRREVIEWER = {Timothy\ S.\ Trudgian},
       DOI = {10.1112/plms.12196},
SHORTHAND = {CK19},
}

@article {Qi24,
    AUTHOR = {Qi, Zhi},
     TITLE = {Subconvexity for {$L$}-functions on {$\rm GL_3$} over number
              fields},
   JOURNAL = {J. Eur. Math. Soc. (JEMS)},
  FJOURNAL = {Journal of the European Mathematical Society (JEMS)},
    VOLUME = {26},
      YEAR = {2024},
    NUMBER = {3},
     PAGES = {1113--1192},
   MRCLASS = {11F67 (11F70)},
  MRNUMBER = {4721030},
MRREVIEWER = {Hideshi\ Takayanagi},
       DOI = {10.4171/jems/1306},
SHORTHAND={Qi24},
}

@article {JM05,
    AUTHOR = {Jutila, Matti and Motohashi, Yoichi},
     TITLE = {Uniform bound for {H}ecke {$L$}-functions},
   JOURNAL = {Acta Math.},
  FJOURNAL = {Acta Mathematica},
    VOLUME = {195},
      YEAR = {2005},
     PAGES = {61--115},
   MRCLASS = {11F66 (11F37 11M41)},
  MRNUMBER = {2233686},
MRREVIEWER = {D.\ R.\ Heath-Brown},
       DOI = {10.1007/BF02588051},
SHORTHAND={JM05},
}

@article {Ish19,
    AUTHOR = {Ishii, Taku},
     TITLE = {The computation of multivariate archimedean zeta integrals on
              {$\rm GL_2\times GSp_4$} and {$\rm GL_4$}},
   JOURNAL = {Proc. Amer. Math. Soc.},
  FJOURNAL = {Proceedings of the American Mathematical Society},
    VOLUME = {147},
      YEAR = {2019},
    NUMBER = {1},
     PAGES = {103--114},
   MRCLASS = {11F70 (22E50)},
  MRNUMBER = {3876734},
MRREVIEWER = {Robert\ James\ Kurinczuk},
       DOI = {10.1090/proc/14211},
SHORTHAND= {Ish19},
}

@article {BHM20,
    AUTHOR = {Blomer, Valentin and Harcos, Gergely and Maga, P\'eter},
     TITLE = {Analytic properties of spherical cusp forms on {${\rm
              GL}(n)$}},
   JOURNAL = {J. Anal. Math.},
  FJOURNAL = {Journal d'Analyse Math\'ematique},
    VOLUME = {140},
      YEAR = {2020},
    NUMBER = {2},
     PAGES = {483--510},
   MRCLASS = {11F72 (11F55 11H06 33E30 43A80)},
  MRNUMBER = {4093914},
MRREVIEWER = {Rolf\ Berndt},
       DOI = {10.1007/s11854-020-0094-7},
SHORTHAND = {BHM20},
}

@article {Blo13,
    AUTHOR = {Blomer, Valentin},
     TITLE = {Applications of the {K}uznetsov formula on {$GL(3)$}},
   JOURNAL = {Invent. Math.},
  FJOURNAL = {Inventiones Mathematicae},
    VOLUME = {194},
      YEAR = {2013},
    NUMBER = {3},
     PAGES = {673--729},
   MRCLASS = {11F72 (11F66)},
  MRNUMBER = {3127065},
MRREVIEWER = {Lei\ Yang},
       DOI = {10.1007/s00222-013-0454-3},
SHORTHAND = {Blo13},
}

@book {Kwa-thesis,
    AUTHOR = {Kwan, Chung-Hang},
     TITLE = {Spectral {M}oments of {R}ankin-{S}elberg {L}-{F}unctions},
      NOTE = {Thesis (Ph.D.)--Columbia University},
 PUBLISHER = {ProQuest LLC, Ann Arbor, MI},
      YEAR = {2022},
     PAGES = {91},
      ISBN = {979-8426-80886-7},
   MRCLASS = {99-05},
  MRNUMBER = {4420006},
SHORTHAND  = {Kwa22},
}

@article {Hum25,
    AUTHOR = {Humphries, Peter},
     TITLE = {Archimedean newform theory for {${\rm GL}_n$}},
   JOURNAL = {J. Inst. Math. Jussieu},
  FJOURNAL = {Journal of the Institute of Mathematics of Jussieu. JIMJ.
              Journal de l'Institut de Math\'ematiques de Jussieu},
    VOLUME = {24},
      YEAR = {2025},
    NUMBER = {1},
     PAGES = {41--116},
   MRCLASS = {11F70 (20G05 22E45 22E50)},
  MRNUMBER = {4847113},
MRREVIEWER = {Ivan\ Mati\'c},
       DOI = {10.1017/S1474748024000227},
SHORTHAND  = {Hum25},
}

@incollection {Jac09,
    AUTHOR = {Jacquet, Herv\'e},
     TITLE = {Archimedean {R}ankin-{S}elberg integrals},
 BOOKTITLE = {Automorphic forms and {$L$}-functions {II}. {L}ocal aspects},
    SERIES = {Contemp. Math.},
    VOLUME = {489},
     PAGES = {57--172},
 PUBLISHER = {Amer. Math. Soc., Providence, RI},
      YEAR = {2009},
   MRCLASS = {11F70 (11F66 22E46)},
  MRNUMBER = {2533003},
MRREVIEWER = {Wee\ Teck\ Gan},
       DOI = {10.1090/conm/489/09547},
SHORTHAND  = {Jac09},
}

@article {BJN25,
    AUTHOR = {Blomer, Valentin and Jana, Subhajit and Nelson, Paul D.},
     TITLE = {Local integral transforms and global spectral decomposition},
   JOURNAL = {Geom. Funct. Anal.},
  FJOURNAL = {Geometric and Functional Analysis},
    VOLUME = {35},
      YEAR = {2025},
    NUMBER = {4},
     PAGES = {1051--1107},
   MRCLASS = {11F66 (11F30 11F70)},
  MRNUMBER = {4956616},
       DOI = {10.1007/s00039-025-00714-0},
SHORTHAND  = {BJN25},
}

\ \\
\end{document}